\newtheorem{theorem}{Theorem}[section]
\newtheorem{lemma}[theorem]{Lemma}
\newtheorem{corollary}[theorem]{Corollary}
\newtheorem{proposition}[theorem]{Proposition}
\newtheorem{example}[theorem]{Example}
\theoremstyle{definition}
\newtheorem{definition}[theorem]{Definition}
\newtheorem{remark}[theorem]{Remark}
\newcommand{\R}{\mathbb{R}}
\newcommand{\N}{\mathbb{N}}
\newcommand{\eps}{\varepsilon}
\numberwithin{equation}{section}
\begin{document}

\title[On convex bodies]
{{\bf On Klee's problem of convex bodies in Banach spaces}}

\author{ Lixin Cheng, Chunlan Jiang, Liping Yuan }
\address{  Lixin Cheng$^\dag$:  School of Mathematical Sciences, Xiamen University,
 Xiamen 361005, China}
\address{ Chunlan Jiang$^\ddag$:  School of Mathematical Sciences, Hebei Normal University,
 Shijiazhuang 050024, China}
 \address{Liping Yuan$^\natural$: School of Mathematical Sciences, Hebei Normal University, Shijiazhuang 050024, China}
 \email{$^\dag$: lxcheng@xmu.edu.cn\;\;(Lixin Cheng)}
 \email{$^\ddag$: cljiang@hebtu.edu.cn\;\;(Chunlan Jiang)}
 \email{$^\natural$: lpyuan@hebtu.edu.cn \;\;(Liping Yuan) }

\thanks{$^\dag$ Support in partial
by the Natural Science Foundation of China, grant no. 11731010.\\\indent $^\ddag$ Support in partial
by the Natural Science Foundation of China, grant no.  11471271 \& 11471270.
\\\indent$^\natural$ Support in partial
by the Natural Science Foundation of China, grant no. 12271139 and the Hebei Natural Science
Foundation, grant no.  A2023205045.                      }

\date{}

\begin{abstract}
  It is well known that every convex body in a finite dimensional normed space can be uniformly approximated by strictly convex and smooth convex bodies.  However, in the case of infinite dimensions, little progress has been made since Klee asked  how it is in the case of infinite dimensions in 1959. In this paper,  we show that for an infinite dimensional Banach space $X$, (1) every convex body can be uniformly approximated by strictly convex  bodies if and only if $X$ admits an equivalent strictly convex norm; (2) every convex body can be uniformly approximated by G\^{a}teaux smooth convex  bodies if the dual $X^*$ of $X$ admits an equivalent strictly convex dual norm;  in particular, (3) if $X$ is either separable, or reflexive, then every convex body in $X$ can be uniformly approximated by strictly convex and smooth convex bodies.  They are done by showing that some correspondences among the sets of all convex bodies endowed with the Hausdorff metric, all continuous coercive Minkowski functionals and Fenchel's transform defined on all quadratic homogenous continuous
convex functions  equipped with the metric induced by the sup-norm of all bounded continuous functions defined on the closed unit ball $B_X$ are actually locally Lipschitz isomorphisms.
\end{abstract}

\keywords{ convex body, strict convexity, smoothness, Banach space}

\subjclass{}

\maketitle

\section{Introduction}
Let $X$ be a Banach space and $X^*$ its dual. $B_X$ stands for the closed unit ball of $X$. We use $\mathfrak C(X)$, $\mathfrak K(X)$, $\mathfrak C_{\rm sc}(X)$ and $\mathfrak C_{\rm sm}(X)$,  to denote  successively, the  cone of all nonempty bounded closed convex sets,  all nonempty compact convex sets,  all strictly convex bodies and all G\^{a}teaux smooth convex bodies endowed with the Hausdorff metric (also called Pompeiu-Hausdorff metric) $d_{H}$ defined for every pair of nonempty bounded convex sets $A, B\subset X$ by \[d_{H}(A, B)=\inf\{r>0: A\subset B+rB_X,\;B\subset A+rB_X\}.\]
Since every $d$ dimensional real normed space $X$ is isometric to $(\R^d,\|\cdot\|)$ for some equivalent norm $\|\cdot\|$ on $\R^d$, in the following we often blur the distinction between a $d$ dimensional normed space $X$ and $\R^d$.

In 1959, V. Klee  \cite{Klee1959}  showed that  in $\R^d$, the set of convex bodies which are both strictly convex and smooth, i.e. $\mathfrak C_{\rm sc}(\R^d)\bigcap\mathfrak C_{\rm sm}(\R^d)$ contains a dense $G_\delta$ subset of $\mathfrak C(\R^d)$. As a consequence of this result, it reveals the following important fact:  \emph{Every convex body in $\R^d$ can be uniformly approximated by strictly convex and smooth convex bodies.}
Meanwhile, he further pointed out ``{\it we have not succeeded in answering some of the natural questions about infinite-dimensional convex bodies}"\cite{Klee1959}.

 We would like to mention that,  in the case of finite dimensions, P. Gruber \cite{gruber1977} (1977), among many other things, rediscovered Klee's theorem. In 1987, T. Zamfirescu gave another remarkable result:  \emph{The set of strictly convex and smooth convex bodies, i.e.  $\mathfrak C_{sc}(\R^d)\bigcap\mathfrak C_{\rm sm}(\R^d)$ cantains the complement of a $\sigma$-porus set of $\mathfrak C(\R^d)$ (hence, it contains a dense a dense $G_\delta$ subset).} Also, a number of interesting geometrical results have been proven  in $\mathbb{R}^d$, so for example,  results on most convex bodies (called generic), regarded so far differentiability of the boundary \cite{zm1980-2, barany2015, Schneider2015}, geodesics \cite{zm1982, zm1992}, diameters \cite{bz1990, bl2017}, etc.
However, in the case of infinite dimensions, little progress has been made since 1959. 
In response to Klee's problem,  in this paper, we show the following theorems.

\begin{theorem}\label{1.1}  Every convex body in a Banach space $X$ can be uniformly approximated by strictly convex bodies
if and only if there exists an equivalent strictly convex norm on $X$.
\end{theorem}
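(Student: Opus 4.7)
For the \emph{only if} direction, I would apply the hypothesis to the closed unit ball $B_X$: choose a strictly convex body $C$ with $d_H(C,B_X)<1/2$, so that $\tfrac12 B_X\subset C\subset \tfrac32 B_X$. The symmetrization $D:=C\cap(-C)$ is a symmetric convex body satisfying the same two-sided containment, so its Minkowski functional $\mu_D$ is an equivalent norm on $X$. Strict convexity is inherited by intersections: any nondegenerate segment in $\partial(A\cap B)$ lies in $(\partial A\cap B)\cup(A\cap \partial B)$, and its relative interior is connected, so it must lie entirely in $\partial A$ or entirely in $\partial B$, contradicting strict convexity of one of them. Applied to $A=C$ and $B=-C$, this shows $D$ is strictly convex, so $\mu_D$ is the desired equivalent strictly convex norm on $X$.

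For the \emph{if} direction, let $\rho$ be an equivalent strictly convex norm on $X$ and let $C$ be a convex body; after translating we may assume $0\in\mathrm{int}(C)$, so $\mu_C$ is continuous, sublinear, nonnegative and coercive. For $\varepsilon>0$ I would set
\[p_\varepsilon(x):=\sqrt{\mu_C(x)^2+\varepsilon\,\rho(x)^2}.\]
Applying Minkowski's inequality in the Euclidean plane to the vectors $(\mu_C(x),\sqrt\varepsilon\,\rho(x))$ and $(\mu_C(y),\sqrt\varepsilon\,\rho(y))$ shows that $p_\varepsilon$ is sublinear; it is clearly continuous, positive off $0$, and coercive since $p_\varepsilon\geq\sqrt\varepsilon\,\rho$. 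Hence $C_\varepsilon:=\{x\in X:p_\varepsilon(x)\leq 1\}$ is a convex body containing $0$ in its interior. The two-sided estimate $\mu_C\leq p_\varepsilon\leq \mu_C+\sqrt\varepsilon\,\rho$ yields uniform convergence $p_\varepsilon\to \mu_C$ on the bounded set $C$, and through the convex-body/Minkowski-functional dictionary (which the paper appears to make locally Lipschitz earlier) this translates into $d_H(C_\varepsilon,C)\to 0$.

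The heart of the argument is strict convexity of $C_\varepsilon$. Suppose $x,y\in\partial C_\varepsilon$ and also $(x+y)/2\in\partial C_\varepsilon$; then equality must hold throughout
\[p_\varepsilon(x+y)\leq \sqrt{(\mu_C(x)+\mu_C(y))^2+\varepsilon(\rho(x)+\rho(y))^2}\leq p_\varepsilon(x)+p_\varepsilon(y)=2,\]
where the first step uses subadditivity of $\mu_C$ and $\rho$ and the second is Minkowski's inequality in $\mathbb R^2$. In particular one forces $\rho(x+y)=\rho(x)+\rho(y)$, and strict convexity of $\rho$ then forces $x$ and $y$ to be positively linearly dependent; combined with $p_\varepsilon(x)=p_\varepsilon(y)=1$ and positive homogeneity of $p_\varepsilon$, this forces $x=y$. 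Thus $C_\varepsilon$ is a strictly convex body Hausdorff-close to $C$, as required. The only delicate step I anticipate is precisely this equality-case analysis, where one must carefully exploit strict convexity of $\rho$; the rest reduces to routine manipulations with Minkowski functionals and the symmetrization trick.
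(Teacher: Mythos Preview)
Your proof is correct, and at the level of construction it matches the paper exactly: for the necessity you symmetrize a nearby strictly convex body via $C\cap(-C)$, and for the sufficiency you perturb by $\varepsilon\rho^2$, so that your $C_\varepsilon=\{p_\varepsilon\le 1\}$ is precisely the level set $\{\tfrac12\mu_C^2+\tfrac{\varepsilon}{2}\rho^2\le\tfrac12\}$ that the paper uses.

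Where you diverge is in the \emph{verification} of strict convexity of $C_\varepsilon$. The paper proceeds structurally: it proves a general lemma (Lemma~3.4) that a positive quadratic homogeneous continuous coercive convex function is strictly convex if and only if all its sublevel sets are strictly convex bodies, then observes (Lemma~3.7) that $\tfrac12\mu_C^2+\tfrac{\varepsilon}{2}\rho^2$ is strictly convex because one summand is. You instead argue directly on the boundary, tracking equality through the chain $p_\varepsilon(x+y)\le\sqrt{(\mu_C(x)+\mu_C(y))^2+\varepsilon(\rho(x)+\rho(y))^2}\le p_\varepsilon(x)+p_\varepsilon(y)$ and invoking the equality case of the triangle inequality for the strictly convex norm $\rho$ to force $x=y$. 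Your route is more elementary and self-contained for this particular statement; the paper's route yields reusable machinery (the $\mathfrak C_{00}(X)\leftrightarrow\mathscr H^2_c(X)$ dictionary of Theorem~5.4) that it exploits again for the smoothness results.

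One small remark on your necessity argument: the phrase ``its relative interior is connected, so it must lie entirely in $\partial A$ or entirely in $\partial B$'' is not quite a proof, since two closed sets can cover a connected set without either doing so alone. The clean statement you want is that for any segment $[x,y]$ inside a convex body $A$, the open segment $(x,y)$ lies either entirely in $\operatorname{int}A$ or entirely in $\partial A$ (if one interior point exists, convexity pushes the whole open segment into the interior). Applying this to $A=C$ and $B=-C$ separately forces $(x,y)\subset\operatorname{int}C\cap\operatorname{int}(-C)=\operatorname{int}(C\cap(-C))$, the desired contradiction. The paper, incidentally, simply asserts this step without justification.
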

\begin{theorem}\label{1.2}
Let $X$ be a Banach space.

i) If there is an equivalent norm on $X$  so that its dual $X^*$ is strictly convex, then every convex body in $X$ can be uniformly approximated by G\^{a}teaux smooth convex bodies.

ii) If $X$ satisfies that every convex body in $X$ can be uniformly approximated by G\^{a}teaux smooth convex bodies, then $X$ admits an equivalent G\^{a}teaux smooth norm.
\end{theorem}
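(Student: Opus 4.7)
The plan is to establish (i) by dualizing to $X^*$ via polars: the hypothesis supplies a $w^*$-closed strictly convex unit ball in $X^*$, which I will use to build strictly convex, $w^*$-closed approximations of $C^\circ$, whose polars back in $X$ will then be Gâteaux smooth convex bodies approximating $C$. Part (ii) will be obtained by symmetrizing a single smooth convex body---whose existence follows from the hypothesis---to produce an equivalent Gâteaux smooth norm.

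For (i), let $\|\cdot\|_*$ denote the equivalent strictly convex dual norm on $X^*$ given by the hypothesis. Given a convex body $C \subset X$, I translate so that $0 \in \mathrm{int}(C)$; then $C^\circ \subset X^*$ is a $w^*$-compact convex body with Minkowski functional $\mu_{C^\circ} = h_C$. For each $\eps > 0$, define
\[
\nu_\eps(f) := \mu_{C^\circ}(f) + \eps \|f\|_*, \qquad D_\eps := \{f \in X^*: \nu_\eps(f) \leq 1\},
\]
and set $C_\eps := D_\eps^\circ \subset X$. Four checks are required. (a) $D_\eps$ is $w^*$-closed, since $\mu_{C^\circ} = h_C$ is a supremum of $w^*$-continuous linear functionals and $\|\cdot\|_*$ is $w^*$-lsc as a dual norm. (b) $D_\eps$ is strictly convex: the standard same-ray argument using strict convexity of $\|\cdot\|_*$ shows that two distinct boundary points $f_1 \neq f_2 \in \partial D_\eps$ either lie on distinct rays, forcing $\nu_\eps((f_1+f_2)/2) < 1$, or on the same ray, in which case $\nu_\eps(f_1) = \nu_\eps(f_2) = 1$ forces $f_1 = f_2$. (c) $C_\eps$ is Gâteaux smooth, because $D_\eps$ is $w^*$-compact and strictly convex, so for every $x \in X \setminus \{0\}$ the $w^*$-continuous functional $f \mapsto \langle f, x\rangle$ attains its maximum on $D_\eps$ at a unique point, rendering $\mu_{C_\eps}(x) = h_{D_\eps}(x)$ Gâteaux differentiable. (d) $d_H(C, C_\eps) = O(\eps)$: the inclusion $D_\eps \subset C^\circ$ gives $C \subset C_\eps$ via the bipolar theorem, and if $f^* \in C^\circ$ attains $\mu_C(x) = \langle f^*, x\rangle$, the rescaled functional $f^*/(1+\eps\|f^*\|_*) \in D_\eps$ yields $\mu_C(x) \leq (1 + \eps M)\,\mu_{C_\eps}(x)$ with $M := \sup\{\|f\|_* : f \in C^\circ\} < \infty$, from which the Hausdorff estimate follows.

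For (ii), the hypothesis in particular guarantees the existence of at least one Gâteaux smooth convex body $C \subset X$ (take any smooth approximation of $B_X$); translating so that $0 \in \mathrm{int}(C)$, I form the equivalent norm
\[
\nu(x) := \mu_C(x) + \mu_C(-x), \qquad x \in X,
\]
which is sublinear, symmetric, and Gâteaux differentiable on $X \setminus \{0\}$ because Gâteaux smoothness of $C$ makes $\mu_C$ Gâteaux differentiable at every nonzero point, so both $\mu_C(\cdot)$ and $\mu_C(-\cdot)$ are differentiable at any $x \neq 0$. The principal technical hurdle lies in (i): the approximating body $D_\eps$ must be \emph{simultaneously} strictly convex and $w^*$-closed, so that the supremum defining $h_{D_\eps}(x)$ is attained at a unique point and $C_\eps$ is genuinely Gâteaux smooth; this is precisely why the hypothesis demands a strictly convex \emph{dual} norm on $X^*$ (so that $\|\cdot\|_*$ is $w^*$-lower semicontinuous) rather than merely any strictly convex equivalent norm, and this interplay between the norm and $w^*$-topologies underlies the locally Lipschitz isomorphism framework announced in the abstract.
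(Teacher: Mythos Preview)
Your proof is correct. For part (i) you take essentially the same underlying idea as the paper---perturb the polar $C^\circ$ in $X^*$ by a small multiple of the strictly convex $w^*$-lower semicontinuous dual norm, obtain a $w^*$-compact strictly convex body $D_\eps$, and dualize back so that $C_\eps = D_\eps^\circ$ is G\^ateaux smooth---but you execute it directly at the level of Minkowski functionals and polars. The paper instead routes the same idea through its general machinery: it passes to the squared Minkowski functional $\tfrac12 p_B^2 \in \mathscr H^2_c(X)$, applies the Fenchel transform $\mathscr F$ (for which $\mathscr F(\tfrac12 p_B^2)=\tfrac12 p_{B^\circ}^2$, so Fenchel duality at the quadratic level is precisely your polar correspondence), perturbs by $\tfrac1n\|\cdot\|_*^2$ in $\mathscr H^{*2}_c(X^*)$, and invokes the locally Lipschitz isomorphisms $T\colon \mathfrak C_{00}(X)\to\mathscr H^2_c(X)$ and $\mathscr F\colon \mathscr H^2_c(X)\to\mathscr H^{*2}_c(X^*)$ together with Lemma~6.14 to conclude. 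Your argument is more elementary and self-contained for this particular statement; the paper's detour through $\mathscr H^2_c$ and $\mathscr F$ buys a unified framework that simultaneously handles Theorems~1.1--1.4 and yields density statements of the form ``contains a dense $F_\sigma$-subset'' rather than a bare approximation for each fixed body.

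For part (ii), the paper does not spell out a separate proof (its restatement, Theorem~6.17, covers only part~(i)); your symmetrization $\nu(x)=\mu_C(x)+\mu_C(-x)$ is the natural argument and parallels the paper's treatment of the corresponding necessity direction in Theorem~5.10, where the strictly convex case is handled by intersecting $B$ with $-B$.
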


But we do not know whether the converse of ii) is true.

\begin{theorem}\label{1.3}
Let $X$ be a Banach space.
If there is an equivalent strictly convex norm on $X$  so that its dual $X^*$ is strictly convex, then every convex body in $X$ can be uniformly approximated by strictly convex and G\^{a}teaux smooth convex bodies.
\end{theorem}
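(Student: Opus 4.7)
Let $|||\cdot|||$ be an equivalent norm on $X$ with both $|||\cdot|||$ strictly convex and $|||\cdot|||^*$ strictly convex; in particular $|||\cdot|||$ is also G\^ateaux smooth. Fix a convex body $K\subset X$; after translation assume $0\in\mathrm{int}(K)$, and set $f=\tfrac12\mu_K^2$ on $X$, with Fenchel conjugate $f^*=\tfrac12 h_K^2$ on $X^*$. The plan is to use the correspondence announced in the abstract---convex bodies under $d_H$ $\leftrightarrow$ coercive continuous Minkowski functionals $\leftrightarrow$ quadratic-homogeneous continuous convex functions, with Fenchel conjugation providing a locally Lipschitz isomorphism to the analogous objects on $X^*$---to construct an approximant by a \emph{double} perturbation, using $|||\cdot|||^2$ on the primal side and $|||\cdot|||^{*2}$ on the dual side.

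For small $\alpha,\beta>0$ define, on $X^*$,
\[F_\alpha^*(x^*)=\tfrac12 h_K(x^*)^2+\tfrac{\alpha}{2}|||x^*|||^{*2},\]
and let $F_\alpha=(F_\alpha^*)^*$ on $X$, equivalently the infimal convolution $F_\alpha=f\,\square\,\tfrac{1}{2\alpha}|||\cdot|||^2$. Then put
\[G_{\alpha,\beta}=F_\alpha+\tfrac{\beta}{2}|||\cdot|||^2\qquad\text{on }X.\]
The function $G_{\alpha,\beta}$ is quadratic-homogeneous, continuous, and convex, so $\nu_{\alpha,\beta}:=\sqrt{2G_{\alpha,\beta}}$ is a Minkowski functional and $K_{\alpha,\beta}:=\{\nu_{\alpha,\beta}\le 1\}$ is a convex body.

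The three key verifications are the following. (i) $F_\alpha^*$ is strictly convex on $X^*$, because $|||\cdot|||^{*2}$ is strictly convex (using dual strict convexity); by the Asplund--Rockafellar duality (strict convexity of $\phi^*$ is equivalent to G\^ateaux differentiability of $\phi$, for proper lower semicontinuous convex $\phi$ with full domain), $F_\alpha$ is G\^ateaux differentiable on all of $X$. (ii) Since $|||\cdot|||$ is strictly convex, $\tfrac{\beta}{2}|||\cdot|||^2$ is strictly convex, so $G_{\alpha,\beta}$ (sum of a convex and a strictly convex function) is strictly convex. (iii) Since $|||\cdot|||^*$ is strictly convex, $|||\cdot|||$ is G\^ateaux smooth, so $\tfrac{\beta}{2}|||\cdot|||^2$ is G\^ateaux differentiable, and thus $G_{\alpha,\beta}$ remains G\^ateaux differentiable. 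Combining, $\nu_{\alpha,\beta}$ is a norm that is simultaneously strictly convex and G\^ateaux differentiable on $X\setminus\{0\}$, so $K_{\alpha,\beta}$ is both strictly convex and G\^ateaux smooth. For the approximation, $G_{\alpha,\beta}\to f$ as $\alpha,\beta\to 0^+$ (pointwise, and locally uniformly by standard convex-function arguments), and the local Lipschitz isomorphism from the abstract then yields $d_H(K_{\alpha,\beta},K)\to 0$.

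The main obstacle is step (i): converting strict convexity of $F_\alpha^*$ on $X^*$ into G\^ateaux differentiability of $F_\alpha$ on $X$, in a form robust enough that it survives the addition of the primal perturbation in step (iii). This is precisely the point at which both halves of the hypothesis are consumed---dual strict convexity of $|||\cdot|||$ to run steps (i) and (iii), primal strict convexity to run step (ii)---which explains why a single one of the two conditions, although sufficient for the weaker Theorems~\ref{1.1} and~\ref{1.2}(i), cannot by itself deliver approximation by bodies that are simultaneously strictly convex and G\^ateaux smooth.
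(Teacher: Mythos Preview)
Your argument is correct and takes a genuinely different route from the paper. The paper proves Theorem~\ref{1.3} (restated as Theorem~7.1) via \emph{Asplund's averaging technique}: given $B$, it sandwiches $B$ between a strictly convex body $A$ and a body $C$ whose polar $C^\circ$ is strictly convex, with $d_H(A,C)<\varepsilon$; then starting from $f_0=\tfrac12 p_A^2$, $g_0=\tfrac12 p_C^2$ it iterates $f_n=\tfrac12(f_{n-1}+g_{n-1})$, $g_n=f_{n-1}\Box g_{n-1}$ and passes to the common limit $\tfrac12 p_D^2$, appealing to Asplund's 1967 argument to see that the limit is strictly convex with strictly convex conjugate. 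Your construction bypasses the iteration entirely: a single dual-side perturbation (equivalently, one inf-convolution $F_\alpha=f\,\Box\,\tfrac{1}{2\alpha}|||\cdot|||^2$) produces G\^ateaux smoothness via Lemma~6.14(i), and then a single primal-side perturbation $+\tfrac{\beta}{2}|||\cdot|||^2$ produces strict convexity via Lemma~3.7, with the crucial observation that this second perturbation is itself G\^ateaux smooth (since $|||\cdot|||^*$ strictly convex forces $|||\cdot|||$ to be G\^ateaux smooth), so smoothness is preserved. This is more elementary and transparent than the averaging scheme; the paper's approach, on the other hand, is the classical renorming-theory machinery and makes explicit the ``sandwich'' picture $A\subset D\subset C$, which some readers may find geometrically appealing. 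Both approaches ultimately rest on the same three ingredients from the paper: the body $\leftrightarrow$ quadratic functional correspondence (Theorem~5.4), the duality between strict convexity of the conjugate and G\^ateaux differentiability (Lemma~6.14), and stability of strict convexity under addition (Lemma~3.7).
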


In particular,

\begin{theorem}\label{1.4}
If $X$ is a separable, or, reflexive Banach space, then every convex body in $X$ can be uniformly approximated by strictly convex and G\^{a}teaux smooth convex bodies.
\end{theorem}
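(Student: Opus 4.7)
The plan is to reduce Theorem~\ref{1.4} directly to Theorem~\ref{1.3}: it suffices to produce, in each case, an equivalent norm on $X$ that is simultaneously strictly convex and whose dual norm is also strictly convex, after which Theorem~\ref{1.3} yields the conclusion at once.

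For a reflexive Banach space $X$, I would invoke Troyanski's classical renorming theorem (1971), which supplies an equivalent LUR norm on $X$ whose dual norm is also LUR. Since LUR implies strict convexity, the hypothesis of Theorem~\ref{1.3} is met and the reflexive case is settled in one step.

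For a separable Banach space $X$, I would appeal to Klee's 1960 renorming theorem, which states that every separable Banach space carries an equivalent strictly convex norm with strictly convex dual. For a self-contained argument, I would proceed as follows. Pick a dense sequence $(x_n)\subset B_X$, define the injection $T\colon X^*\to\ell_2$ by $Tx^*=(2^{-n}x^*(x_n))_n$, and introduce on $X^*$ the equivalent $w^*$-lower semicontinuous norm
\[ |||x^*|||^2=\|x^*\|^2+\|Tx^*\|_2^2. \]
The strict convexity of $\ell_2$ combined with the injectivity of $T$ forces $|||\cdot|||$ to be strictly convex, so its predual provides a norm $q$ on $X$ with strictly convex dual. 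Combining $q$ with a strictly convex equivalent norm on $X$ (furnished by Day's theorem for separable spaces) via Asplund's averaging procedure then produces a single norm with both desired properties simultaneously.

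The main technical obstacle lies in the separable case, specifically in the Asplund averaging step: one must verify that the averaged norm retains strict convexity on both $X$ and on $X^*$. Strict convexity on $X$ is immediate, but strict convexity of the dual hinges on preservation of strict convexity under infimal convolution of the dual quadratic functionals, which must be handled with care through approximate minimizers because $X^*$ need not be reflexive. Once the joint norm is secured, the application of Theorem~\ref{1.3} is purely formal in both cases.
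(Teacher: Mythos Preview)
Your proposal is correct and follows essentially the same route as the paper: both reduce Theorem~\ref{1.4} to Theorem~\ref{1.3} by invoking the standard renorming fact that separable (respectively, reflexive) Banach spaces admit an equivalent strictly convex norm whose dual norm is also strictly convex. The paper's proof is a single sentence citing this renorming result as known, whereas you supply more detail (Troyanski for the reflexive case, Klee/Day plus Asplund averaging for the separable case); the extra care you take over the averaging step is not needed here since Klee's 1960 result can be quoted directly, but the overall strategy is identical.
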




In order to outline the ideas and methods of our proof, we need more symbols. We use  $\mathfrak C_{00}(X)$ to denote all convex bodies $B$ of $X$ with the origin $0\in{\rm int}(B)$. $\mathscr M_0(X)$ ($\mathscr H_c^2(X)$, respectively) stands for the cone of all continuous coercive Minkowski functionals (all positive quadratic homogenous continuous coercive convex functions, respectively)  defined on $X$ endowed with the metric $d$ defined for $f,g\in\mathscr M_0(X)$ ($\mathscr H_c^2(X)$, respectively) by
 $d(f,g)=\sup_{x\in B_X}|f(x)-g(x)|$, 
 where ``$f$ is coercive" means $f(x)\rightarrow+\infty$ whenever $\|x\|\rightarrow+\infty$. Finally, $\mathscr H_c^{*2}(X)$ represents the cone of all continuous and $w^*$-lower semicontinuous positive quadratic homogenous coercive convex functions on $X^*$.

 To show the theorems mentioned above, we establish a number of correspondences among the cones  mentioned previously, and then show the following correspondences are order-reversing and locally Lipschitz isomorphisms.

 i) the correspondence $\mathfrak C_{00}(X)\rightarrow\mathscr M_0(X)$ by $C\rightarrow p_C$, the Minkowski functional generated by $C\in \mathfrak C_{00}(X)$;

 ii) the correspondence $\mathscr H_c^2(X)\rightarrow \mathfrak C_{00}(X)$;

 iii) Fenchel's transform from $\mathscr H_c^2(X)$ to  $\mathscr H_c^{*2}(X)$.

\section{Preliminaries}

To begin with, we recall some definitions and basic properties which will be used in the sequel.

\subsection{Convex body}

Let $X$ be a real Banach space.   A hyperplane $H$ of $X$ is an affine subspace of codimension one, i.e. there exist a maximal proper subspace $N$ of $X$ and $x_0\in X$ so that $H=x_0+N$. Note that for each closed hyperplane $H$ there is a continuous functional $x^*(\neq0)\in X^*$ so that $H=\{x\in X: \langle x^*,x\rangle=1\}$ if $H$ is  off the origin; and  $H=\{x\in X: \langle x^*,x\rangle=0\}$ if $H$ is a linear subspace. A hyperplane $H$ is said to support a set $B$ at a point $x\in {\rm bdr}B$ if $x \in H$ and if $H$ bounds $B$.

A closed bounded convex set $B\subset X$ is said to be a {\it convex body} provided its interior ${\rm int}B\neq\emptyset$.
A convex body $B$ is called {\it strictly convex} if every boundary point $x$ of $B$ is an extreme point of $B$, that is, $x$
is not the middle point of any line-segment in $B$. We say  a convex body $B$ is {\it (G\^{a}teaux) smooth} if it satisfies that every $x\in{\rm bdr}(B)$ (the boundary of $B$) admits a unique supporting hyperplane (closed hyperplane) of $B$ through $x$.

We use $B_X$ and $S_X$ to denote the closed unit ball and the unit sphere of $X$, i.e. $B_X=\{x\in X: \|x\|\leq1\}$, and $S_X=\{x\in X: \|x\|=1\}$. We say that $X$ is  {\it strictly convex} (resp. {\it G\^{a}teaux smooth}) if the closed unit ball $B_X$ (as a convex body) is strictly convex (G\^{a}teaux smooth, resp.).

\subsection{Minkowski functional}

Let $C$ be a  closed convex set of a Banach space X with $0\in{\rm int}C$. The functional $p_C$ defined for $x\in X$ by $p_C(x)=\inf\{\lambda>0: x\in\lambda C\}$ is called the {\it Minkowski functional} with respect to (or, generated by) $C$.


\begin{proposition}\label{2.3}
Every Minkowski functional $p_C$ defined on a Banach space $X$ is a nonnegative-valued continuous sublinear functional, that is,
$$p_C(kx)=kp_C(x),\;\;p_C(x+y)\leq p_C(x)+p_C(y),\;\;\forall\;x,y\in\;X\;{\rm and}\;k\geq0.$$

\end{proposition}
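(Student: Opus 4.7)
The plan is to verify the three properties (nonnegativity, positive homogeneity, subadditivity) from the definition and then derive continuity from the hypothesis $0\in\mathrm{int}\,C$. Nonnegativity is immediate because $p_C(x)$ is an infimum over a subset of $(0,\infty)$, and $p_C(0)=0$ because $0\in C$. Positive homogeneity for $k>0$ follows from the substitution $\mu=\lambda/k$ in the defining infimum:
\[
\{\lambda>0:kx\in\lambda C\}=\{k\mu:\mu>0,\ x\in\mu C\},
\]
so $p_C(kx)=k\,p_C(x)$; the case $k=0$ reduces to $p_C(0)=0$.

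Subadditivity is the structurally most interesting step. Given $x,y\in X$ and $\varepsilon>0$, I would pick $\lambda,\mu>0$ with $\lambda<p_C(x)+\varepsilon$, $\mu<p_C(y)+\varepsilon$, $x\in\lambda C$, $y\in\mu C$. The key identity is $\lambda C+\mu C=(\lambda+\mu)C$, which uses only the convexity of $C$: for $u,v\in C$,
\[
\lambda u+\mu v=(\lambda+\mu)\Bigl(\tfrac{\lambda}{\lambda+\mu}u+\tfrac{\mu}{\lambda+\mu}v\Bigr)\in(\lambda+\mu)C,
\]
and the reverse inclusion is trivial. Hence $x+y\in(\lambda+\mu)C$, so $p_C(x+y)\leq\lambda+\mu<p_C(x)+p_C(y)+2\varepsilon$; letting $\varepsilon\to 0^+$ yields the claim.

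Continuity then drops out quickly from the assumption $0\in\mathrm{int}\,C$: choose $r>0$ with $rB_X\subseteq C$, so that any $z\in rB_X$ satisfies $p_C(z)\leq 1$, and by positive homogeneity $p_C(z)\leq \|z\|/r$ for every $z\in X$. Combining this local bound with subadditivity, $p_C(x)\leq p_C(y)+p_C(x-y)\leq p_C(y)+\|x-y\|/r$ and, symmetrically, $p_C(y)\leq p_C(x)+\|y-x\|/r$, giving
\[
|p_C(x)-p_C(y)|\leq \tfrac{1}{r}\|x-y\|,
\]
which is in fact Lipschitz continuity, stronger than the continuity claimed.

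I expect the only real obstacle to be the convex-combination identity $\lambda C+\mu C=(\lambda+\mu)C$ used in the subadditivity step; once that is in hand, everything else is either a change of variables in the infimum or a two-line estimate.
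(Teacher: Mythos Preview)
Your proof is correct and standard. The paper itself states this proposition without proof, treating it as a classical fact (the Minkowski functional and its basic properties are textbook material). Your argument---deriving nonnegativity and positive homogeneity directly from the defining infimum, subadditivity from the convexity identity $\lambda C+\mu C=(\lambda+\mu)C$, and (global) Lipschitz continuity from the inclusion $rB_X\subseteq C$---is exactly the usual one and is complete as written.
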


For  a convex set $C$ of a Banach space $X$,  let $L_C=\{x\in X:x+C=C\}$, $K_C=\{x\in X: x+C\subset C\}$. Then $C$ is said to be {\it line-free} ({\it ray-free,} resp.) provided $L_C=\{0\}$ (
$K_C=\{0\}$, resp.).

\begin{proposition}\label{2.5}
Let $C$ be a  closed convex set of X with $0\in{\rm int}C$. Then

i) $p_C$ is strictly positive, i.e. $p_C(x)=0 \Longrightarrow x=0$, if and only if $C$ is ray-free.

ii) $p_C$ is a seminorm on $X$ if and only if $C$ is symmetric, i.e. $-C=C$;

iii) $p_C$ is a norm if and only if $C$ is symmetric and line-free.

iv) $p_C$ is an equivalent norm on $X$  if and only if $C$ is a symmetric convex body.
\end{proposition}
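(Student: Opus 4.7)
The plan is to handle the four assertions in order, each by comparing sub-level sets of $p_C$ with the geometry of $C$. Throughout I shall use the standing identity
\[ C=\{x\in X:p_C(x)\le 1\}, \]
which holds because $C$ is closed, convex, with $0\in\mathrm{int}\,C$, together with the scaling $p_C(\lambda x)=\lambda p_C(x)$ for $\lambda\ge 0$ from Proposition~\ref{2.3}.

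For (i), observe first that the family $\{\lambda C\}_{\lambda>0}$ is increasing in $\lambda$ (since $0\in C$ and $C$ is convex), so $p_C(x)=0$ is equivalent to $tx\in C$ for every $t\ge 0$, i.e.\ to the ray from $0$ through $x$ being contained in $C$. The next step is to identify this recession condition with membership in $K_C$. The implication $x\in K_C\Rightarrow nx\in C$ for all $n\in\N$ (hence $tx\in C$ for all $t\ge 0$, using convexity with $0$) is immediate by iteration. For the converse, given $c\in C$ I write
\[ c+x=\lim_{n\to\infty}\Bigl(\bigl(1-\tfrac{1}{n}\bigr)c+\tfrac{1}{n}(nx)\Bigr), \]
a limit of convex combinations of points of $C$; closedness of $C$ then yields $c+x\in C$, so $x+C\subset C$. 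Hence $p_C$ is strictly positive iff $K_C=\{0\}$.

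For (ii), the change of variables $p_C(-x)=\inf\{\lambda>0:x\in\lambda(-C)\}=p_{-C}(x)$ reduces absolute homogeneity of $p_C$ to the identity $p_C=p_{-C}$, which via $C=\{p_C\le 1\}$ is equivalent to $-C=C$; sublinearity is already supplied by Proposition~\ref{2.3}. For (iii), I observe that when $C$ is symmetric, $K_C$ is itself symmetric (if $x+C\subset C$, then $-x+(-C)\subset -C$, and $-C=C$), so $K_C=L_C$; consequently, for a symmetric $C$, being ray-free and being line-free coincide, and (iii) follows by combining (i) and (ii). Finally for (iv), given (iii) what remains is the statement that $p_C$ is equivalent to $\|\cdot\|$ precisely when $C$ is bounded: continuity of $p_C$ (Proposition~\ref{2.3}) delivers a bound $p_C(x)\le b\|x\|$ on $S_X$ and hence on all of $X$ by homogeneity, while a lower bound $p_C(x)\ge a\|x\|$ is equivalent, via $C=\{p_C\le 1\}$, to $C\subset (1/a)B_X$, i.e.\ to boundedness of $C$. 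Combined with symmetry from (iii) and the hypothesis $0\in\mathrm{int}\,C$, this gives the convex-body characterization.

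The main obstacle I anticipate is the recession-cone identity used in (i), where one has to exploit closedness of $C$ to pass to a limit of convex combinations in order to upgrade ``the ray $\{tx:t\ge 0\}$ lies in $C$'' to ``$x+C\subset C$''; every other step is a direct unwinding of definitions and an application of Proposition~\ref{2.3}.
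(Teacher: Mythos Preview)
Your proof is correct. The paper itself states Proposition~\ref{2.5} without proof, treating it as a standard preliminary fact, so there is no ``paper's own proof'' to compare against; your argument supplies what the paper omits. The key nontrivial step---identifying $\{x:p_C(x)=0\}$ with the recession cone $K_C$ via the limit of convex combinations $(1-\tfrac{1}{n})c+\tfrac{1}{n}(nx)\to c+x$ and closedness of $C$---is handled cleanly, and your observation in (iii) that symmetry of $C$ forces $K_C=L_C$ is the right way to reconcile the ray-free and line-free conditions. One cosmetic remark: in (iv) your phrase ``delivers a bound $p_C(x)\le b\|x\|$ on $S_X$'' is slightly elliptic; the cleanest route is to use continuity of $p_C$ at $0$ (or directly the inclusion $\delta B_X\subset C$ from $0\in\mathrm{int}\,C$) to get $p_C\le \delta^{-1}\|\cdot\|$, but what you wrote is correct once unpacked.
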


\subsection{Differentiability}

Let $f$ be a real-valued function defined on a Banach space $X$, and $x\in X$ be fixed. Then $f$ is said to be {\it G\^{a}teaux differentiable at $x$} provided there is a continuous functional $x^*\in X^*$ such that

\begin{equation}\label{2.3.1}\lim_{t\rightarrow 0}\frac{f(x+ty)-f(x)}t=\langle x^*,y\rangle,\;\;\forall\;y\in X.\end{equation}
In this case, we denote by $d_Gf(x)=x^*$, and call it the {\it G\^{a}teaux derivative of $f$ at $x$}.

  $f$ is said to be {\it Fr\'{e}chet differentiable at $x$} if there is a continuous functional $x^*\in X^*$ such that

\begin{equation}\label{2.3.2}\lim_{\|y\|\rightarrow 0}\Big(\frac{f(x+y)-f(x)-\langle x^*,y\rangle}{\|y\|}\Big)=0.\end{equation}
In this case, we denote by $d_Ff(x)=x^*$, and call it the {\it Fr\'{e}chet derivative of $f$ at $x$}.

The norm $\|\cdot\|$ of a Banach space $X$ is said to be {\it G\^{a}teaux (Fr\'echet, {\rm resp.}) smooth} provided it is everywhere G\^{a}teaux ( Fr\'echet, resp.) differentiable off the origin.

\begin{theorem}[Mazur's theorem](See, for instance, \cite{Ph}.)
Every continuous convex function defined on a separable Banach space is everywhere G\^{a}teaux differentiable in a dense $G_\delta$-subset of the space.
\end{theorem}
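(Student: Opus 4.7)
The plan is to apply the Baire category theorem, handling one direction at a time. Fix a direction $y\in X$ and consider the symmetric second difference
\[
g_\varepsilon(x)=\frac{f(x+\varepsilon y)+f(x-\varepsilon y)-2f(x)}{\varepsilon},\qquad \varepsilon>0.
\]
Convexity of $f$ makes $g_\varepsilon(x)$ nonnegative and monotonically decreasing as $\varepsilon\searrow 0$, with limit $f'_+(x;y)-f'_-(x;y)$; the one-sided directional derivatives exist because the restriction of $f$ to any affine line is a one-dimensional continuous convex function. Continuity of $f$ makes each $g_\varepsilon$ continuous in $x$, so
\[
D_y:=\{x\in X:f'_+(x;y)=f'_-(x;y)\}=\bigcap_{k\in\N}\bigcup_{n\in\N}\{x\in X:g_{1/n}(x)<1/k\}
\]
is a $G_\delta$ set. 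Density of $D_y$ in $X$ comes from the one-dimensional theory: on any line $x_0+\R y$, the restriction of $f$ is differentiable off an at most countable set, so $D_y$ meets every open ball.

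Using separability, I would then pick a countable dense sequence $\{y_n\}\subset S_X$ and, applying the Baire category theorem to the complete metric space $X$, conclude that
\[
D:=\bigcap_{n\in\N}D_{y_n}
\]
is a dense $G_\delta$ subset of $X$. What remains, and what I expect to be the main obstacle, is upgrading ``differentiability of $f$ at $x\in D$ along every $y_n$'' to full G\^ateaux differentiability at $x$.

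The strategy here is to set $\varphi(y):=f'_+(x;y)$ and exploit three properties. First, $\varphi$ is sublinear in $y$ (positively homogeneous and subadditive) by convexity of $f$. Second, $\varphi$ is Lipschitz in $y$, with constant coming from the local Lipschitz constant of $f$ near $x$ (continuous convex functions on Banach spaces are automatically locally Lipschitz). Third, the identity $f'_+(x;-y)=-f'_-(x;y)$, combined with the membership $x\in D_{y_n}$, yields $\varphi(y_n)+\varphi(-y_n)=0$ for every $n$. Lipschitz continuity of $\varphi$ together with density of $\{y_n\}$ then extends this oddness to all $y\in X$. An odd sublinear function is automatically linear: subadditivity applied both to $y+y'$ and to $-(y+y')$ forces the equality $\varphi(y+y')=\varphi(y)+\varphi(y')$. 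Hence $\varphi\in X^*$, and $f$ is G\^ateaux differentiable at $x$ with $d_Gf(x)=\varphi$.

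Outside of this final extension step, the argument is a routine deployment of Baire category once the one-dimensional convex-function theory is in hand. The delicate point is the passage from a dense countable set of directions to every direction, and what makes it go through is the combination of the local Lipschitz property of continuous convex functions with the elementary observation that an odd sublinear function on a linear space is linear.
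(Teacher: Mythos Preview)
Your argument is correct and is essentially the classical proof of Mazur's theorem as presented, for instance, in Phelps' monograph. Note, however, that the paper does not supply its own proof of this statement: it is quoted as a known result with a reference to \cite{Ph}, so there is no in-paper proof to compare against. Your outline matches the standard treatment found in that reference.
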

\begin{theorem}[Asplund's theorem](See,\cite{Asp1}, also \cite{Ph}.)
Every continuous convex function defined on a separable Banach space with separable dual is  Fr\'echet differentiable everywhere in a dense $G_\delta$-subset of the space.
\end{theorem}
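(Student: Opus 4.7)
The plan is to characterize Fr\'echet differentiability of a continuous convex function $f$ pointwise in terms of the shrinking norm-diameter of its subdifferential $\partial f$, express the set of Fr\'echet-differentiability points as a countable intersection of open sets, and finally prove density of each such open set by a Baire-category/slicing argument that crucially exploits the separability of $X^*$.

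First, I would recall the basic machinery: a continuous convex $f$ on a Banach space is locally Lipschitz, and its subdifferential $\partial f(x):=\{x^*\in X^*: f(y)\ge f(x)+\langle x^*,y-x\rangle\ \text{for all } y\in X\}$ is nonempty, convex, $w^*$-compact, locally norm-bounded, and $w^*$-upper semicontinuous in $x$. The pivotal pointwise lemma I would prove is that $f$ is Fr\'echet differentiable at $x_0$ if and only if for every $\eps>0$ there exists $\delta>0$ with $\mathrm{diam}(\partial f(B(x_0,\delta)))<\eps$, where the diameter is measured in the norm of $X^*$. One direction is immediate from the definition of Fr\'echet differentiability; the converse uses the identity $f'(x;h)=\sup_{x^*\in\partial f(x)}\langle x^*,h\rangle$ together with convexity to promote norm control on $\partial f$ into the uniform-in-direction error estimate characteristic of Fr\'echet differentiability. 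Consequently,
\[D_F(f)=\bigcap_{n\ge 1}O_n,\qquad O_n:=\{x\in X:\exists\,\delta>0,\ \mathrm{diam}(\partial f(B(x,\delta)))<1/n\},\]
and each $O_n$ is manifestly open, reducing the theorem to showing density of every $O_n$.

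For density, fix a nonempty open $V\subset X$ and set $\eps=1/n$. Separability of $X^*$ supplies a countable dense sequence $\{x_k^*\}\subset X^*$, and then the sets $V_k:=\{x\in V:\partial f(x)\cap B(x_k^*,\eps/3)\neq\emptyset\}$ cover $V$. Since $V$ is a Baire space (open in a complete metric space), some $V_k$ is somewhere dense, yielding a nonempty open $W\subset V$ on whose closure $V_k$ is dense. Combining this with the $w^*$-upper semicontinuity of $\partial f$ and Mazur's theorem (which produces a dense set of G\^{a}teaux points inside $W$ at which $\partial f$ is single-valued) one trims $W$ to a nonempty open $V'$ on which $\partial f(V')\subset B(x_k^*,\eps)$, so that $V'\subset O_n\cap V$.

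The main obstacle is converting the $w^*$-control on $\partial f$ supplied by upper semicontinuity into genuine \emph{norm} control of its diameter, which is precisely what the separable-dual hypothesis is there to enable. The cleanest execution is a halving scheme: starting from $W$, one iteratively passes to a smaller nonempty open set on which the norm-oscillation of $\partial f$ has been cut in half, using separability at each stage to select an appropriate $x_k^*$ in the relevant bounded piece of $X^*$. Intersecting the resulting nested sequence gives a point of $V$ each of whose neighborhoods has $\partial f$-image of small norm-diameter, so $O_n$ meets $V$. As $V$ was arbitrary, $O_n$ is dense, and the Baire category theorem then yields that $D_F(f)=\bigcap_n O_n$ is a dense $G_\delta$-subset of $X$, which is the desired conclusion.
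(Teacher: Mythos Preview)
The paper does not supply its own proof of Asplund's theorem; it states the result and refers the reader to \cite{Asp1} and \cite{Ph}. Your outline follows the classical route found in those references: characterise Fr\'echet differentiability of $f$ at $x$ via the norm-oscillation of $\partial f$ near $x$, express $D_F(f)$ as $\bigcap_n O_n$ with each $O_n$ open, and prove density of each $O_n$ by a Baire-category argument exploiting separability of $X^*$.

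There is, however, a genuine gap in your density step. From ``$V_k$ is dense in an open $W\subset V$'' you want ``there is a nonempty open $V'\subset W$ with $\partial f(V')\subset B(x_k^*,\eps)$'', but Mazur's theorem together with $w^*$-upper semicontinuity does not deliver this: G\^{a}teaux points of $f$ need not lie in $V_k$, points of $V_k$ need not be G\^{a}teaux points, and $w^*$-upper semicontinuity provides no norm control whatsoever. Your halving scheme faces the same obstruction at every stage: knowing that $\partial f(x)$ \emph{meets} a small ball for densely many $x$ does not force $\partial f(x)$ to be \emph{contained} in a slightly larger ball on any open subset. The remedy in the cited references is to pass to a \emph{minimal} $w^*$-usco submap $T\subset\partial f$; minimality guarantees that whenever $T(W)$ meets a $w^*$-open set $U$ there is a nonempty open $W'\subset W$ with $T(W')\subset U$, and separability of $X^*$ (equivalently, norm-fragmentability of bounded $w^*$-compact subsets of $X^*$) supplies such $U$ of arbitrarily small norm-diameter. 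One then concludes that $T$ is single-valued and norm-to-norm continuous on a dense $G_\delta$, whence $f$ is Fr\'echet differentiable there by the selection criterion recorded as Proposition~\ref{2.4.4}. Without this minimality/fragmentability ingredient your argument does not establish the density of $O_n$.
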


\subsection{Subdifferentiability}
Let $f$ be a convex function defined on a Banach space $X$. Its {\it subdifferential mapping} $\partial f: X\rightarrow 2^{X^*}$ is defined for $x\in X$ by
\begin{equation}\label{2.4.1}\partial f(x)=\{x^*\in X^*: f(y)-f(x)\geq\langle x^*,y-x\rangle,\;y\in X\}.\end{equation}

Let $(U,\tau_U), (V,\tau_V)$ be two topological spaces. A set-valued mapping $P: U\rightarrow 2^V$ is said to be {\it upper-semicontinuous} provided that for each $u\in U$ and each open neighborhood $O_{P(u)}$ of $P(u)$, there is an open neighborhood $O_u$ of $u$ such that $P(O_u)\subset O_{P(u)}$.   We say that the subdifferential mapping $\partial f: X\rightarrow 2^{X^*}$ defined above is {\it norm-to-$w^*$ upper semicontinuous} if we endow $X$ with the norm-topology and its dual space $X^*$  with its weak-star   topology.

The following properties regarding to differentiability and subdifferentiability of convex functions are classical. See, for instance, \cite{Ph}.
\begin{proposition}\label{2.4.2}
Let $f$ be a continuous convex function defined on a Banach space $X$. Then

i) for each $x\in X$, $\partial f(x)$ is a nonempty $w^*$-compact convex set of $X^*$;

ii) $\partial f: X\rightarrow 2^{X^*}$  is norm-to-$w^*$ upper semicontinuous;

iii) $f$ is G\^{a}teaux differentiable at $x$ if and only if $\partial f(x)$ is a singleton;

iv) $f$ is Fr\'echet differentiable at $x$ if and only if $\partial f(x)$ is  single-valued and norm-to-norm upper semicontinuous at $x$.
\end{proposition}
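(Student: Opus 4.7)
The plan is to prove the four assertions in the stated order, since each builds on its predecessors.

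For (i), nonemptiness follows from a Hahn--Banach separation applied to the epigraph $\{(y,t)\in X\times\R : t\geq f(y)\}$. This set is closed and convex and, since $f$ is continuous, has nonempty interior, while $(x,f(x))$ lies on its boundary; a supporting hyperplane there must be non-vertical (continuity of $f$ rules out vertical separators at an interior point of the domain of $f$), and its slope provides an $x^*\in X^*$ verifying the subgradient inequality. Convexity of $\partial f(x)$ is immediate from the linearity of that inequality in $x^*$, while $w^*$-closedness follows from its pointwise character. For $w^*$-compactness I would first establish local boundedness of $\partial f$: continuity of $f$ supplies $\delta,M>0$ with $|f(y)-f(x)|\leq M$ whenever $\|y-x\|\leq\delta$, and inserting $y=x+\delta u$ with $\|u\|\leq 1$ into the subgradient inequality yields $\|x^*\|\leq M/\delta$; Banach--Alaoglu then closes the argument.

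For (ii) I would argue by contradiction. Suppose some $w^*$-open $U\supset\partial f(x)$ fails to contain $\partial f(y)$ for $y$ in any norm neighborhood of $x$, so that there exist $x_n\to x$ in norm and $x_n^*\in\partial f(x_n)\setminus U$. The same Lipschitz-type estimate as above, applied uniformly on a small ball around $x$, forces $\{x_n^*\}$ to lie in a norm-bounded, hence $w^*$-compact, set; extract a $w^*$-cluster point $x^*$. Passing to the net limit in $f(y)-f(x_n)\geq\langle x_n^*,y-x_n\rangle$, using continuity of $f$ to handle $f(x_n)\to f(x)$ and boundedness of $\|x_n^*\|$ to handle $\langle x_n^*,x_n-x\rangle\to 0$, yields $x^*\in\partial f(x)$; but $x^*$ lies in the $w^*$-closed complement of $U$, a contradiction.

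For (iii) the plan is to use the one-sided directional derivative $f'(x;y)=\lim_{t\downarrow 0}(f(x+ty)-f(x))/t$, which exists by convexity, together with the duality formula $f'(x;y)=\max\{\langle x^*,y\rangle:x^*\in\partial f(x)\}$ that follows from (i) and Hahn--Banach. A singleton subdifferential makes $y\mapsto f'(x;y)$ linear and satisfies $f'(x;-y)=-f'(x;y)$, which is precisely G\^ateaux differentiability; conversely, G\^ateaux differentiability forces every member of $\partial f(x)$ to coincide with $d_Gf(x)$ on every direction.

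For (iv), the forward direction uses \v Smulyan-type estimates: if $f$ is Fr\'echet differentiable at $x$ with derivative $x^*$, then for $\|h\|$ small and any $y^*\in\partial f(x+h)$ the subgradient inequalities at $x$ and at $x+h$, combined with the Fr\'echet remainder $o(\|h\|)$, force $\|y^*-x^*\|\to 0$ uniformly as $\|h\|\to 0$. The converse is where I expect the main obstacle. Assuming $\partial f(x)=\{x^*\}$ and norm-to-norm upper semicontinuity of $\partial f$ at $x$, I would pick, for each small $h$, any $y_h^*\in\partial f(x+h)$; upper semicontinuity yields $\|y_h^*-x^*\|\to 0$ as $\|h\|\to 0$. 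Combining the two-sided inequalities
\[\langle x^*,h\rangle\;\leq\;f(x+h)-f(x)\;\leq\;\langle y_h^*,h\rangle\]
then gives $|f(x+h)-f(x)-\langle x^*,h\rangle|\leq\|y_h^*-x^*\|\cdot\|h\|=o(\|h\|)$, which is Fr\'echet differentiability. The delicate point is extracting a \emph{uniform} modulus $\|y_h^*-x^*\|\to 0$ from the purely topological upper semicontinuity hypothesis, since a priori the selection $h\mapsto y_h^*$ is only guaranteed to eventually enter each fixed norm ball around $x^*$; this is where I expect the bookkeeping to be most subtle.
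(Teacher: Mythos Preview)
The paper does not prove this proposition at all: it is stated as a classical fact with the reference ``See, for instance, \cite{Ph}'' and no argument is given. Your proposal is therefore not competing with any proof in the paper, and what you have outlined is the standard textbook argument (essentially what one finds in Phelps).

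Your sketch is correct throughout. The only point worth commenting on is the worry you flag at the end of (iv). There is no subtlety there: norm-to-norm upper semicontinuity of $\partial f$ at $x$, together with $\partial f(x)=\{x^*\}$, means precisely that for every $\varepsilon>0$ there exists $\delta>0$ with $\partial f(x+h)\subset \{y^*:\|y^*-x^*\|<\varepsilon\}$ whenever $\|h\|<\delta$. This is already the uniform modulus you need; the single $\delta$ works for every $h$ in the ball and every choice of $y_h^*\in\partial f(x+h)$, so the estimate $|f(x+h)-f(x)-\langle x^*,h\rangle|\leq\|y_h^*-x^*\|\,\|h\|<\varepsilon\|h\|$ holds uniformly for $\|h\|<\delta$. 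No additional bookkeeping is required.
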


Recall that a selection  for $\partial f$ is a single-valued mapping $\varphi: X\rightarrow X^*$ satisfying $\varphi(x)\in\partial f(x)$ for all $x\in X$.

\begin{proposition}\label{2.4.3}
Suppose that $f$ is a continuous convex function defined on a Banach space $X$. Then the following statements are equivalent.

i) $f$ is G\^{a}teaux differentiable at $x$;

ii) $\partial f$ is single-valued at $x$;

iii) every selection for $\partial f$ is norm-to-$w^*$ continuous at $x$;

iv) there is a selection for $\partial f$ which is norm-to-$w^*$ continuous at $x$.
\end{proposition}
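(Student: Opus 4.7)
The plan is to establish the cycle (i) $\Rightarrow$ (ii) $\Rightarrow$ (iii) $\Rightarrow$ (iv) $\Rightarrow$ (ii). Since (i) $\Leftrightarrow$ (ii) has already been recorded as Proposition \ref{2.4.2}(iii), the genuine task is to close the loop among (ii), (iii), (iv).

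For (ii) $\Rightarrow$ (iii), I would simply repackage the norm-to-$w^*$ upper semicontinuity of $\partial f$ from Proposition \ref{2.4.2}(ii): when $\partial f(x)=\{x^*\}$, every $w^*$-open neighborhood $V$ of $x^*$ contains $\partial f(y)$ for all $y$ in some norm-neighborhood of $x$, so any selection $\varphi$ automatically satisfies $\varphi(y)\in V$ nearby. For (iii) $\Rightarrow$ (iv), I would invoke Proposition \ref{2.4.2}(i) to produce at least one selection by pointwise choice; once a selection exists, (iii) supplies one that is continuous at $x$.

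The substantive step is (iv) $\Rightarrow$ (ii). My plan here is a contradiction argument: assume that a selection $\varphi$ is norm-to-$w^*$ continuous at $x$ but that some $v^*\in\partial f(x)$ differs from $\varphi(x)$, and pick $h\in X$ with $\langle v^*,h\rangle>\langle\varphi(x),h\rangle$. Applying the subgradient inequality (\ref{2.4.1}) twice---once to $v^*\in\partial f(x)$ evaluated at the point $x+th$, and once to $\varphi(x+th)\in\partial f(x+th)$ evaluated at the point $x$---sandwiches the difference quotient:
\begin{equation*}
\langle v^*,h\rangle \;\leq\; \frac{f(x+th)-f(x)}{t} \;\leq\; \langle \varphi(x+th),h\rangle
\end{equation*}
for every $t>0$. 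Letting $t\to 0^+$ and using that $\langle\varphi(x+th),h\rangle\to\langle\varphi(x),h\rangle$ by $w^*$-continuity of $\varphi$ at $x$ (applied to the single functional $h$) produces $\langle v^*,h\rangle\leq\langle\varphi(x),h\rangle$, contradicting the choice of $h$. Hence $\partial f(x)=\{\varphi(x)\}$.

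The only place that requires attention is (iv) $\Rightarrow$ (ii), because the hypothesis supplies continuity of one particular selection rather than upper semicontinuity of the multivalued $\partial f$. The two-sided subgradient sandwich above is precisely what compensates for this asymmetry, and it does so without needing any further structural assumptions on $X$.
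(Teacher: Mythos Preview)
Your argument is correct. The paper does not actually supply a proof of this proposition: it is stated as a classical fact with a reference to Phelps~\cite{Ph}, so there is no in-paper argument to compare against. Your cycle (i)$\Leftrightarrow$(ii)$\Rightarrow$(iii)$\Rightarrow$(iv)$\Rightarrow$(ii) is the standard one, and the subgradient sandwich you use for (iv)$\Rightarrow$(ii) is exactly the device employed in the literature (including in Phelps). One minor remark: in (iii)$\Rightarrow$(iv) you appeal to the axiom of choice to build a selection; this is fine, though you might note explicitly that $\varphi(x)$ is forced to equal the unique element of $\partial f(x)$ once (ii) holds, so that the $w^*$-limit in your sandwich is indeed $\varphi(x)$ rather than some other subgradient---you use this implicitly and it is immediate, but worth a word.
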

\begin{proposition}\label{2.4.4}
Suppose that $f$ is a continuous convex function defined on a Banach space $X$. Then the following statements are equivalent.

i) $f$ is Fr\'echet differentiable at $x$;

ii) $\partial f$ is single-valued at $x$ and norm-to-norm upper semicontinuous at $x$.;

iii) every selection for $\partial f$ is norm-to-norm continuous at $x$;

iv) there is a selection for $\partial f$ which is norm-to-norm continuous at $x$.
\end{proposition}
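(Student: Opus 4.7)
The plan is to prove the chain $(i)\Rightarrow(ii)\Rightarrow(iii)\Rightarrow(iv)\Rightarrow(i)$, leveraging Proposition \ref{2.4.2} and Proposition \ref{2.4.3} to handle everything except a single Fr\'echet-specific sandwich estimate at the end.

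For $(i)\Rightarrow(ii)$, Fr\'echet differentiability implies G\^ateaux differentiability, so $\partial f(x)=\{x^*\}$ with $x^*=d_Ff(x)$ by Proposition \ref{2.4.2}(iii). To upgrade norm-to-$w^*$ upper semicontinuity (which is automatic) to norm-to-norm upper semicontinuity at $x$, I would fix $\eps>0$, use the Fr\'echet condition to choose $\delta>0$ with $|f(x+h)-f(x)-\langle x^*,h\rangle|\le\eps\|h\|$ for $\|h\|\le\delta$, and then, for any $x_n\to x$ and $x_n^*\in\partial f(x_n)$, combine the subdifferential inequality $f(x_n+ty)-f(x_n)\ge t\langle x_n^*,y\rangle$ (for unit $y$ and small $t>0$) with the Fr\'echet estimates applied to $f(x_n)$ and $f(x_n+ty)$ to conclude $\langle x_n^*-x^*,y\rangle\le 3\eps$ for all $\|y\|=1$ once $n$ is large enough, giving $\|x_n^*-x^*\|\le 3\eps$.

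The steps $(ii)\Rightarrow(iii)\Rightarrow(iv)$ are essentially formal: if $\partial f$ is single-valued and norm-to-norm upper semicontinuous at $x$, then any selection $\varphi$ satisfies $\varphi(x_n)\in\partial f(x_n)\subset O_{\varphi(x)}$ eventually for every norm-open $O_{\varphi(x)}$; and $(iii)\Rightarrow(iv)$ needs only that a selection exists, which follows from Proposition \ref{2.4.2}(i) and the axiom of choice.

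The heart of the proof is $(iv)\Rightarrow(i)$. Given a selection $\varphi$ of $\partial f$ which is norm-to-norm continuous at $x$, the two subdifferential inequalities
\[
\langle \varphi(x),h\rangle \;\le\; f(x+h)-f(x) \;\le\; \langle \varphi(x+h),h\rangle
\]
(the right one from $f(x)\ge f(x+h)+\langle\varphi(x+h),-h\rangle$) give the sandwich
\[
0\le f(x+h)-f(x)-\langle \varphi(x),h\rangle \le \|\varphi(x+h)-\varphi(x)\|\,\|h\|.
\]
Dividing by $\|h\|$ and letting $\|h\|\to 0$, the norm-to-norm continuity of $\varphi$ at $x$ forces the right-hand side to $0$, which is exactly the Fr\'echet differentiability of $f$ at $x$ with $d_Ff(x)=\varphi(x)$. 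This step is not really an obstacle so much as the only place where the norm-to-norm (as opposed to norm-to-$w^*$) strength of the continuity is genuinely used; the main care needed elsewhere is just to make sure that the Fr\'echet remainder in $(i)\Rightarrow(ii)$ is controlled uniformly enough in the direction $y$ to yield a norm (rather than merely pointwise) estimate on $x_n^*-x^*$.
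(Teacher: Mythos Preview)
Your proof is correct. The paper does not actually supply its own argument for this proposition: it is stated in the preliminaries as a classical fact with the remark ``The following properties regarding to differentiability and subdifferentiability of convex functions are classical. See, for instance, \cite{Ph}.'' So your self-contained chain $(i)\Rightarrow(ii)\Rightarrow(iii)\Rightarrow(iv)\Rightarrow(i)$ provides what the paper merely cites; in particular the sandwich estimate for $(iv)\Rightarrow(i)$ and the Fr\'echet-remainder argument for $(i)\Rightarrow(ii)$ are exactly the standard proofs one finds in Phelps, and nothing further is needed.
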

\begin{proposition}\label{2.4.5}
Let $C$ be a convex set of $X$ containing $0$ in its interior.  Then $x^*\in \partial p_C(x)$ if and only if the following two conditions are satisfied:

i) $\langle x^*,z\rangle\leq p_C(z),\;\;\forall\;z\in X;$

ii) $\langle x^*,x\rangle= p_C(x)$.
\end{proposition}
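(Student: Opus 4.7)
The proposition is essentially a restatement, in the Minkowski functional setting, of the classical description of the subdifferential of a sublinear functional. My plan is to exploit the sublinearity (positive homogeneity plus subadditivity) of $p_C$, which is guaranteed by Proposition~\ref{2.3}.

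For the sufficiency direction, I would start. Assume (i) and (ii). For every $y\in X$, (i) applied to $z=y$ gives $\langle x^*,y\rangle\leq p_C(y)$, while (ii) says $\langle x^*,x\rangle=p_C(x)$. Subtracting,
\[
p_C(y)-p_C(x)\geq\langle x^*,y\rangle-\langle x^*,x\rangle=\langle x^*,y-x\rangle,
\]
which is exactly the defining inequality (\ref{2.4.1}) for $x^*\in\partial p_C(x)$. This direction is essentially immediate.

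For the necessity direction, assume $x^*\in\partial p_C(x)$. First I would obtain (ii) by testing the subgradient inequality against two cleverly chosen points. Plugging $y=0$ into (\ref{2.4.1}) and using $p_C(0)=0$ (a consequence of positive homogeneity with $k=0$) yields $\langle x^*,x\rangle\geq p_C(x)$. Plugging $y=2x$ and using positive homogeneity $p_C(2x)=2p_C(x)$ yields $\langle x^*,x\rangle\leq p_C(x)$. Combining gives (ii). To get (i), I would use (ii) together with subadditivity: for any $z\in X$ and any $t>0$, the subgradient inequality with $y=tz$ reads
\[
p_C(tz)-p_C(x)\geq\langle x^*,tz-x\rangle=t\langle x^*,z\rangle-\langle x^*,x\rangle.
\]
By positive homogeneity the left side equals $tp_C(z)-p_C(x)$, and by (ii) the term $-\langle x^*,x\rangle$ cancels $-p_C(x)$, leaving $tp_C(z)\geq t\langle x^*,z\rangle$; dividing by $t>0$ yields (i).

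There is no serious obstacle: the two ingredients beyond the definition are $p_C(0)=0$ and positive homogeneity, both already recorded in Proposition~\ref{2.3}. The only mild care needed is to prove (ii) \emph{before} (i), since the derivation of (i) requires the cancellation $p_C(x)=\langle x^*,x\rangle$; reversing the order forces one to introduce an unwanted additive constant. The hypothesis $0\in\operatorname{int}C$ is used only implicitly, to guarantee that $p_C$ is a finite, continuous sublinear functional so that $\partial p_C(x)$ is the natural object to describe.
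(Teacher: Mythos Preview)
Your proof is correct. The paper does not supply its own proof of this proposition; it is listed among several standard facts about subdifferentials (Propositions~\ref{2.4.2}--\ref{2.4.5}) stated without argument, so there is nothing to compare against beyond confirming your argument is sound.

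One minor remark: in your derivation of (i) the parameter $t>0$ is harmless but unnecessary, since taking $t=1$ already gives $p_C(z)-p_C(x)\geq\langle x^*,z\rangle-\langle x^*,x\rangle$, and (ii) finishes. Alternatively, (i) can be obtained directly without first establishing (ii): substituting $y=x+z$ into the subgradient inequality and using subadditivity $p_C(x+z)\leq p_C(x)+p_C(z)$ yields $\langle x^*,z\rangle\leq p_C(x+z)-p_C(x)\leq p_C(z)$, so the order in which you prove (i) and (ii) is not actually forced.
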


\subsection{Fenchel's transform}
We say an extended real-valued convex function $f$ is {\it proper}, if $f(x)>-\infty$ for all $x\in X$ and its essential domain ${\rm dom}f\equiv\{x\in X: f(x)<+\infty\}\neq\emptyset$.  For a Banach space $X$, we denote by $\mathscr C_{\rm conv}(X)$ the set of all extended real-valued lower semicontinuous proper convex functions defined on $X$.

\begin{definition}\label{2.5.1}
Fenchel's transform $\mathscr F$ is defined for $f\in \mathscr C_{\rm conv}(X)$ by
\begin{equation}\label{2.5.2}
\mathscr F(f)(x^*)=\sup\{\langle x^*,x\rangle-f(x): x\in X\}, \;\;x^*\in X^*.
\end{equation}
\end{definition}

If $X$ is of finite dimension, then  $X^*$ is (linearly) isomorphic to $X$. Therefore, in this case,  Fenchel's transform $\mathscr F$ defined above is actually from  $\mathscr C_{\rm conv}(X)$ to itself, we denote it by $\mathscr L$ and call it Legendre's transform.

\begin{proposition}\label{2.5.3} Let $X$ be a Banach space. Then
 for each  $f\in \mathscr  C_{\rm conv}(X)$, $\mathscr F(f)$ is a $w^*$-lower semicontinuous (proper) convex function.
 \end{proposition}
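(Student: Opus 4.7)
The plan is to break the claim into its three constituent parts: convexity, $w^*$-lower semicontinuity, and properness of $\mathscr F(f)$. The first two parts come for free from the definition, and only properness requires genuine work.

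First I would unpack the definition. For each fixed $x\in X$, the map $\varphi_x:X^*\to\R\cup\{+\infty\}$ given by $\varphi_x(x^*)=\langle x^*,x\rangle-f(x)$ is affine in $x^*$ (taking the value $-\infty$ is avoided by restricting to $x\in\mathrm{dom}(f)$), and it is $w^*$-continuous on $X^*$ because evaluation at a fixed point $x$ is by definition $w^*$-continuous. Since $\mathscr F(f)=\sup_{x\in\mathrm{dom}(f)}\varphi_x$, it is the pointwise supremum of a family of $w^*$-continuous affine functions; so $\mathscr F(f)$ is automatically convex and $w^*$-lower semicontinuous as a function on $X^*$.

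Next I would establish $\mathscr F(f)>-\infty$ everywhere. Since $f\in\mathscr C_{\rm conv}(X)$ is proper, pick any $x_0\in\mathrm{dom}(f)$; then for every $x^*\in X^*$,
\[
\mathscr F(f)(x^*)\geq \langle x^*,x_0\rangle-f(x_0)>-\infty.
\]

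The main obstacle is showing $\mathscr F(f)$ is finite at some point of $X^*$, i.e.\ that it does not take the value $+\infty$ identically. For this I would invoke the geometric Hahn-Banach theorem applied to the epigraph $\mathrm{epi}(f)\subset X\times\R$. Because $f$ is proper, convex, and lower semicontinuous, $\mathrm{epi}(f)$ is a nonempty closed convex set in the Banach space $X\times\R$. Fix any $x_0\in\mathrm{dom}(f)$ and any real number $r<f(x_0)$; then $(x_0,r)\notin\mathrm{epi}(f)$, and Hahn-Banach separation produces a continuous linear functional on $X\times\R$, which after the standard normalization (the $\R$-coordinate of the separating functional must be strictly negative, else $x_0$ itself could not be separated from the ``vertical'' ray above it) yields $x^*\in X^*$ and $c\in\R$ with
\[
f(x)\geq \langle x^*,x\rangle-c \quad\text{for all }x\in X.
\]
Rearranging gives $\langle x^*,x\rangle-f(x)\leq c$ for all $x$, hence $\mathscr F(f)(x^*)\leq c<+\infty$. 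Combined with the first two parts this finishes the proof. The delicate point in this step, worth double-checking, is precisely the sign of the scalar coordinate in the separating functional, which is what lets us rewrite the separation as an affine minorant of $f$.
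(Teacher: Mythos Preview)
Your argument is correct. The decomposition into convexity, $w^*$-lower semicontinuity, and properness is the natural one, and your treatment of each piece is sound; in particular the epigraph separation argument goes through as you indicate (with $\alpha>0$ forced by comparing the separated point $(x_0,r)$ against $(x_0,f(x_0))\in\mathrm{epi}(f)$).

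The paper takes a different route to the only nontrivial part, namely showing $\mathscr F(f)$ is finite somewhere. Rather than separating a point from the epigraph, the paper invokes the Br{\o}ndsted--Rockafellar theorem to obtain a point $x_0\in\mathrm{dom}(f)$ with $\partial f(x_0)\neq\emptyset$, and then observes that any $x_0^*\in\partial f(x_0)$ satisfies $\mathscr F(f)(x_0^*)=\langle x_0^*,x_0\rangle-f(x_0)<+\infty$. Your Hahn--Banach approach is more elementary and self-contained: it needs only the standard separation theorem, whereas Br{\o}ndsted--Rockafellar rests on Ekeland's variational principle. The paper's approach, on the other hand, yields a little more information (the affine minorant is actually exact at $x_0$, i.e.\ a subgradient exists), though that extra information is not used here. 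Either method suffices for this proposition.
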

 \begin{proof}
Note that each $f\in\mathscr C_{\rm conv}(X)$ is lower semicontinuous and proper. Then  by the Br{\o}dsted-Rockafellar Theorem (see, for instance, \cite{Ph}), there exists $x_0\in{\rm dom}(f)\equiv\{x\in X: f(x)<\infty\}$ such that $\partial f(x_0)\neq\emptyset$. Fix any $x^*_0\in \partial f(x_0)$. Since $f(x)-f(x_0)\geq\langle x^*_0,x-x_0\rangle$ for all $x\in X$, we have
  \[\mathscr F(f)(x^*_0)=\sup\{\langle x^*_0,x\rangle-f(x): x\in X\}=\langle x^*_0,x_0\rangle-f(x_0)<+\infty.\]
  Therefore, $\mathscr F(f)$ is a proper convex function on $X^*$. From
 (\ref{2.5.2}), $w^*$-lower semicontinuity of  $\mathscr F(f)$ follows.
 \end{proof}
We use $\mathscr C^*_{\rm conv}(X^*)$ to denote the set of all extended real-valued and $w^*$-lower semicontinuous  (hence, lower semicontinuous)  proper convex functions defined on $X^*$.

The following result is classical (see, for instance, \cite{Di}).

\begin{proposition}\label{2.5.4}
Let $X=(X,\|\cdot\|)$ be a Banach space, and $f=\frac{1}2\|\cdot\|^2$. Then $\mathscr F(f)=\frac{1}2{\|\cdot\|^*}^2$, where $\|\cdot\|^*$ is the dual norm of $X^*$.
\end{proposition}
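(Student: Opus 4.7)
The plan is to evaluate the supremum defining $\mathscr{F}(f)(x^*)$ directly by optimizing separately over the modulus and the direction of $x$. The result should drop out from two elementary ingredients: (a) the defining inequality for the dual norm, $\langle x^*,x\rangle \le \|x^*\|^*\,\|x\|$ for all $x\in X$; and (b) the one-variable maximum $\max_{t\in\R}\{at - \tfrac{1}{2}t^2\} = \tfrac{1}{2}a^2$, attained at $t=a$. I would split the argument into the two inequalities $\mathscr{F}(f)(x^*)\le\tfrac{1}{2}(\|x^*\|^*)^2$ and $\mathscr{F}(f)(x^*)\ge\tfrac{1}{2}(\|x^*\|^*)^2$.

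For the upper bound, I would simply chain the two ingredients. For every $x\in X$,
\[
\langle x^*,x\rangle - \tfrac{1}{2}\|x\|^2 \;\le\; \|x^*\|^*\|x\| - \tfrac{1}{2}\|x\|^2 \;\le\; \tfrac{1}{2}(\|x^*\|^*)^2,
\]
where the last step is Young's inequality $ab \le \tfrac{1}{2}a^2 + \tfrac{1}{2}b^2$ applied with $a=\|x^*\|^*$ and $b=\|x\|$. Passing to the supremum in $x$ gives $\mathscr{F}(f)(x^*)\le \tfrac{1}{2}(\|x^*\|^*)^2$.

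For the lower bound, since in a general (non-reflexive) Banach space the supremum in $\|x^*\|^* = \sup\{\langle x^*,x\rangle : x\in B_X\}$ need not be attained, I would argue by approximation. Given $\eps>0$, pick $u\in S_X$ with $\langle x^*, u\rangle \ge \|x^*\|^* - \eps$. Plugging $x = tu$ with $t\ge 0$ into the defining supremum gives
\[
\mathscr{F}(f)(x^*) \;\ge\; t\,\langle x^*,u\rangle - \tfrac{1}{2}t^2 \;\ge\; t(\|x^*\|^* - \eps) - \tfrac{1}{2}t^2 \quad\text{for every } t\ge 0.
\]
The right-hand side is maximized at $t = \|x^*\|^* - \eps$, yielding $\mathscr{F}(f)(x^*) \ge \tfrac{1}{2}(\|x^*\|^* - \eps)^2$. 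Letting $\eps\to 0^+$ completes the lower bound.

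I do not anticipate any serious obstacle; this proof is essentially a one-variable calculus optimization combined with the definition of the dual norm. The only mild subtlety is precisely the non-attainment of the dual-norm supremum in general Banach spaces, which forces the $\eps$-argument in the lower bound rather than an evaluation at a single optimizing vector. No appeal to subdifferentials, $w^*$-lower semicontinuity, or the Br\o{}nsted--Rockafellar-type tools from Proposition~\ref{2.5.3} seems necessary here; those would only be needed if one wanted to present the identity as a consequence of a more structural duality theorem (e.g.\ $\mathscr{F}\mathscr{F}(f)=f$ for $f\in\mathscr{C}_{\mathrm{conv}}(X)$ combined with convex conjugacy of $t\mapsto \tfrac{1}{2}t^2$), but the direct route above is shorter.
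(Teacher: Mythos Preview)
Your proof is correct. The paper does not actually prove this proposition; it merely records it as classical with a reference to Diestel's book, so there is no ``paper's own proof'' to compare against. Your direct computation via the dual-norm inequality and the one-variable optimization is the standard elementary argument, and the $\eps$-approximation for the lower bound correctly handles the possible non-attainment of the dual norm in a general Banach space (for $x^*=0$ the identity is trivial, and for $x^*\neq 0$ one simply takes $\eps<\|x^*\|^*$ so that $t=\|x^*\|^*-\eps\ge 0$).
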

The notions of order-preserving (order reversing, resp.)  and fully order order-preserving (fully order reversing, resp.) were used in \cite{AAM} \cite{CL} and \cite{CL2}.
Let $A$ and  $B$ be two partially ordered sets. A mapping $M: A\rightarrow B$ is said to be {\it order preserving} ({\it order-reversing}, resp.), provided $a\geq b\in A\Longrightarrow M(a)\geq M(b)$ ($a\geq b\in A\Longrightarrow M(a)\leq M(b)$, resp.). If, in addition, $M$ is a bijection from $A$ to $B$ and is
{order preserving} ({ order-reversing}, resp.), then $M$ is called a {\it fully order-presevering} ({\it fully order-reversing}, resp.,)  mapping.

\begin{proposition}\label{2.5.5} Let $X$ be a Banach space. Then

i)  the Fenchel transform
 $\mathscr F: \mathscr C_{\rm conv}(X)\rightarrow \mathscr C_{\rm conv}(X^*)$ is  order-reversing;

ii) $\mathscr F: \mathscr C_{\rm conv}(X)\rightarrow \mathscr C^*_{\rm conv}(X^*)$ is fully order-reversing;

iii) $\mathscr F: \mathscr C_{\rm conv}(X)\rightarrow \mathscr C_{\rm conv}(X^*)$ is fully order-reversing if and only if $X$ is reflexive.
 \end{proposition}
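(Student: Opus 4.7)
The plan is to handle the three parts in order, with the Fenchel--Moreau biconjugate theorem carrying most of the work. Part i) is immediate: if $f_1, f_2 \in \mathscr C_{\rm conv}(X)$ satisfy $f_1 \geq f_2$ pointwise, then for each fixed $x^* \in X^*$ the function $x \mapsto \langle x^*,x\rangle - f_1(x)$ is pointwise dominated by $x \mapsto \langle x^*,x\rangle - f_2(x)$, and taking the supremum over $x \in X$ yields $\mathscr F(f_1)(x^*) \leq \mathscr F(f_2)(x^*)$. Together with Proposition \ref{2.5.3}, this shows $\mathscr F$ is a well-defined order-reversing map from $\mathscr C_{\rm conv}(X)$ into $\mathscr C^*_{\rm conv}(X^*) \subset \mathscr C_{\rm conv}(X^*)$.

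For part ii), I would introduce the ``inverse'' Fenchel transform $\mathscr F^{\circ}: \mathscr C^*_{\rm conv}(X^*) \to \mathscr C_{\rm conv}(X)$ by $\mathscr F^{\circ}(g)(x) = \sup\{\langle x^*, x\rangle - g(x^*): x^* \in X^*\}$ (the resulting function is lower semicontinuous and proper whenever $g$ is w*-lsc and proper), and invoke the Fenchel--Moreau biconjugate theorem in both directions: $\mathscr F^{\circ} \circ \mathscr F = \mathrm{id}$ on $\mathscr C_{\rm conv}(X)$ and $\mathscr F \circ \mathscr F^{\circ} = \mathrm{id}$ on $\mathscr C^*_{\rm conv}(X^*)$. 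The first identity yields injectivity of $\mathscr F$; the second shows that every $g \in \mathscr C^*_{\rm conv}(X^*)$ lies in the image of $\mathscr F$, giving surjectivity onto $\mathscr C^*_{\rm conv}(X^*)$. Combined with i), this produces full order-reversal.

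For part iii), the sufficiency uses reflexivity to collapse the distinction between $\mathscr C_{\rm conv}(X^*)$ and $\mathscr C^*_{\rm conv}(X^*)$: when $X = X^{**}$, the weak and weak-* topologies on $X^*$ both coincide with $\sigma(X^*, X)$, so a proper convex function on $X^*$ has norm-closed epigraph iff weakly closed (by the Mazur/Hahn--Banach separation theorem for convex sets) iff weak-* closed; hence $\mathscr C_{\rm conv}(X^*) = \mathscr C^*_{\rm conv}(X^*)$ and ii) applies. Conversely, assume $\mathscr F: \mathscr C_{\rm conv}(X) \to \mathscr C_{\rm conv}(X^*)$ is fully order-reversing. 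By ii) its image is exactly $\mathscr C^*_{\rm conv}(X^*)$, so $\mathscr C^*_{\rm conv}(X^*) = \mathscr C_{\rm conv}(X^*)$, meaning every proper lsc convex function on $X^*$ is automatically w*-lsc. For an arbitrary $x^{**} \in X^{**}$, apply this observation to both $\langle \cdot, x^{**}\rangle$ and $-\langle \cdot, x^{**}\rangle$: each is a norm-continuous (hence lsc) proper convex function, so each is w*-lsc, whence $\langle \cdot, x^{**}\rangle$ is both w*-lsc and w*-usc, hence w*-continuous, hence represented by an element of $X$. Therefore $X^{**} = X$.

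The main technical point is the correct symmetric application of Fenchel--Moreau in the w*-setting of $X^*$ in part ii): one needs to know that every $g \in \mathscr C^*_{\rm conv}(X^*)$ is the pointwise supremum of its w*-continuous affine minorants, which are parametrized precisely by $X \times \R$ via Hahn--Banach separation in the dual pair $(X^*, X)$; this is what forces $\mathscr F(\mathscr F^{\circ}(g)) = g$ and, dually, $\mathscr F^{\circ}(\mathscr F(f)) = f$. Once these biconjugate identities are in hand, the rest is bookkeeping, and the characterization in iii) reduces to the standard fact that $\mathrm{lsc} = w^*\text{-}\mathrm{lsc}$ for convex functions on $X^*$ if and only if $X$ is reflexive.
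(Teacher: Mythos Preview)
Your proposal is correct and follows essentially the same route as the paper: part i) is immediate from the definition, part ii) uses the restricted Fenchel transform $\mathscr F^{\circ}$ (the paper writes it as $\mathscr F|_{*}(g)=\mathscr F(g)|_X$) together with the Fenchel--Moreau biconjugate identities to exhibit the two-sided inverse, and part iii) reduces to the equivalence $\mathscr C^*_{\rm conv}(X^*)=\mathscr C_{\rm conv}(X^*)\Longleftrightarrow X$ reflexive. The only difference is that the paper states this last equivalence without proof, whereas you supply the standard argument via $w^*$-continuity of $\langle\cdot,x^{**}\rangle$; this is a welcome elaboration rather than a different method.
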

 \begin{proof}
 i) It follows from (\ref{2.5.2}) and  Proposition \ref{2.5.3}.

 ii) By i),  $\mathscr F: \mathscr C_{\rm conv}(X)\rightarrow \mathscr C^*_{\rm conv}(X^*)$ is order-reversing.   Next, we show that $\mathscr F$ is surjective. Fix $g\in\mathscr C^*_{\rm conv}(X^*)$.
Since $g$ is $w^*$-lower senicontinuous and proper convex on $X^*$, there is $f\in \mathscr C_{\rm conv}(X)$ such that $\mathscr F(f)=g$. It remains to show that
$\mathscr F^{-1}: \mathscr C^*_{\rm conv}(X^*)\rightarrow \mathscr C_{\rm conv}(X)$ is again order-reversing. Let $\mathscr F|_{*}$ be defined for $g\in\mathscr C^*_{\rm conv}(X^*)$ by
\[\mathscr F|_{*}(g)=\mathscr F(g)|_{X},\]
i.e. $\mathscr F(g)$ but its domain is restricted to $X$. Then it is also order-reversing. Note $\mathscr F|_{*}\circ\mathscr F$ is just the identity $I=\mathscr F^{-1}\circ\mathscr F$ on $\mathscr C_{\rm conv}(X)$. Therefore, $\mathscr F^{-1}=\mathscr F|_{*}$ is order-reversing. Consequently, we finish the proof of ii).

iii) Since $\mathscr C^*_{\rm conv}(X^*)=\mathscr C_{\rm conv}(X^*)$ if and only if $X$ is reflexive, iii) follows from ii).
 \end{proof}

\subsection{Inf-convolution}
Let $f,g$ be two continuous positive quadratic homogenous convex functions defined on $X$. Their inf-convolution $f\Box g$ is defined by
\begin{equation}\label{2.6.1} f\Box g(x)=\inf\{f(x-y)+g(y): y\in X\},\;\;x\in X.\end{equation}
\begin{proposition}\label{2.6.2}
Let $f,g\geq0$ be two continuous  positive quadratic homogenous convex functions  defined on $X$, and $\mathscr F$ be Fenchel's transform. Then

i) $\mathscr F(f+g)= \mathscr F(f)\Box\mathscr F(g)$;

ii)$\mathscr F(f\Box g)= \mathscr F(f)+\mathscr F(g)$.
\end{proposition}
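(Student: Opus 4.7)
The plan is to prove ii) first by a direct change of variables, and then to establish i) via a Hahn--Banach separation argument in $X\times\R$. Part ii) is essentially mechanical; the substantive content lies in i). One could try to deduce i) from ii) by applying the conjugate on $X^*$, but this would require verifying that $\mathscr F(f)\Box\mathscr F(g)$ is $w^*$-lower semicontinuous on $X^*$, which is not automatic, so the direct separation argument is cleaner.

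For ii), I would unfold the definition:
\[
\mathscr F(f\Box g)(x^*)=\sup_{x\in X}\Big\{\langle x^*,x\rangle-\inf_{y\in X}[f(x-y)+g(y)]\Big\}=\sup_{x,y\in X}\{\langle x^*,x\rangle-f(x-y)-g(y)\}.
\]
Substituting $z=x-y$ and splitting $\langle x^*,x\rangle=\langle x^*,z\rangle+\langle x^*,y\rangle$, the double supremum decouples into $\sup_z\{\langle x^*,z\rangle-f(z)\}+\sup_y\{\langle x^*,y\rangle-g(y)\}=\mathscr F(f)(x^*)+\mathscr F(g)(x^*)$, which is ii).

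For i), one direction is easy. For any $y^*\in X^*$, choosing $u=v=x$ in the two inner sups defining $\mathscr F(f)(x^*-y^*)+\mathscr F(g)(y^*)$ shows that this quantity is at least $\langle x^*,x\rangle-f(x)-g(x)$ for every $x\in X$; taking $\sup_x$ and then $\inf_{y^*}$ gives $\mathscr F(f)\Box\mathscr F(g)(x^*)\geq\mathscr F(f+g)(x^*)$. The reverse inequality is the main obstacle. Set $\alpha=\mathscr F(f+g)(x^*)$ (assumed finite; the infinite case is trivial). The assumption $f(x)+g(x)-\langle x^*,x\rangle\geq-\alpha$ on $X$ forces the closed convex epigraph $A=\{(x,t)\in X\times\R:f(x)\leq t\}$ and the strict hypograph $B=\{(x,s)\in X\times\R:s<\langle x^*,x\rangle-g(x)-\alpha\}$ to be disjoint; continuity of $g$ ensures $B$ is nonempty and open. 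Hahn--Banach separation then yields a nonzero pair $(y^*,\lambda)\in X^*\times\R$ and $\beta\in\R$ with $\langle y^*,x\rangle+\lambda t\geq\beta\geq\langle y^*,x\rangle+\lambda s$ whenever $(x,t)\in A$ and $(x,s)\in B$. Because $A$ is unbounded upward and $B$ unbounded downward in the $t$-direction, $\lambda\geq0$; because $f$ and $g$ are finite on all of $X$ (so both sets project onto $X$), the case $\lambda=0$ would force $y^*=0$, contradicting $(y^*,\lambda)\neq0$; hence $\lambda>0$, and we normalise to $\lambda=1$. Substituting $t=f(x)$ on the left and $s=\langle x^*,x\rangle-g(x)-\alpha$ on the right and taking suprema over $x$ yields $\mathscr F(f)(-y^*)\leq-\beta$ and $\mathscr F(g)(x^*+y^*)\leq\beta+\alpha$. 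Adding these and writing $z^*=x^*+y^*$ gives $\mathscr F(f)(x^*-z^*)+\mathscr F(g)(z^*)\leq\alpha$, so $\mathscr F(f)\Box\mathscr F(g)(x^*)\leq\mathscr F(f+g)(x^*)$, completing i). The delicate technical point is the separation together with the exclusion of the degenerate case $\lambda=0$, and this is precisely where the everywhere-continuity and everywhere-finiteness of $f$ and $g$ enter.
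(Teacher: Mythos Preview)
Your proof is correct, but for part i) it takes a genuinely different route from the paper. Your argument for ii) is the standard change-of-variables computation; the paper merely says ii) ``can be shown analogously'' to i), so your treatment is more explicit there.

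For the hard inequality in i), the paper does not use Hahn--Banach separation in $X\times\R$. Instead it argues pointwise on the range of the subdifferential: for $x^*\in\partial(f+g)(x)$ it invokes the sum rule $\partial(f+g)(x)=\partial f(x)+\partial g(x)$ (valid because $f,g$ are continuous) to write $x^*=u^*+v^*$ with $u^*\in\partial f(x)$, $v^*\in\partial g(x)$, and then the Fenchel--Young equality gives
\[
\mathscr F(f+g)(x^*)=\big(\langle u^*,x\rangle-f(x)\big)+\big(\langle v^*,x\rangle-g(x)\big)=\mathscr F(f)(u^*)+\mathscr F(g)(v^*)\geq \mathscr F(f)\Box\mathscr F(g)(x^*).
\]
The inequality is then extended from $\partial(f+g)(X)$ to all of $X^*$ by the Bishop--Phelps theorem (density of the range) together with continuity of both sides on $X^*$.

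Your separation argument is more self-contained: it needs neither the subdifferential sum rule nor Bishop--Phelps, and it does not rely on continuity of the conjugates on $X^*$. The paper's density-plus-continuity step tacitly uses that $\mathscr F(f+g)$ is finite and continuous everywhere, which really requires $f+g$ to be coercive; your approach sidesteps this by working directly with an arbitrary $x^*$ for which $\mathscr F(f+g)(x^*)$ is finite. On the other hand, the paper's route makes the role of the quadratic-homogeneous structure more visible, since that is what drives both Bishop--Phelps density and continuity of the conjugate in the coercive case.
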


\begin{proof} i)\; For every $x^*\in X^*$,
\[\mathscr F(f)\Box\mathscr F(g)(x^*)=\inf_{y^*\in X^*}\{\mathscr F(f)(x^*-y^*)+\mathscr F(g)(y^*)\}\]
\[=\inf_{y^*\in X^*}\{\sup\{\langle x^*-y^*,x\rangle-f(x):x\in X\}+\sup\{\langle y^*,y\rangle-g(y):y\in X\}\}\]
\[\geq\inf_{y^*\in X^*}\sup_{x\in X}\{\langle x^*,x\rangle-(f(x)+g(x)):x\in X\}=\mathscr F(f+g)(x^*). \]

On the other hand,  $x^*\equiv u^*+v^*\in\partial (f+g)(x)=\partial (f)(x)+\partial (g)(x)$ (where $u^*\in\partial f(x), v^*\in\partial g(x)$) implies
\[\mathscr F(f+g)(x^*)=\sup_{y\in X}\{\langle x^*,y\rangle-(f(y)+g(y))\}=\langle x^*,x\rangle-(f+g)(x)\]
\[=(\langle u^*,x\rangle-f(x))+(\langle v^*,x\rangle-g(x))\]
\[=\mathscr F(f)(u^*)+\mathscr F(g)(x^*-u^*)\]\[\geq\mathscr F(f)\Box \mathscr F(f)(x^*).\]
Since $f,g$ are  continuous positive quadratic homogenous convex, by the Bishop-Phelps theorem (see, for instance, \cite{Ph}), the range of $\partial (f+g)$ is dense in $X^*$.
This and continuity of $\mathscr F(f+g)$ and $\mathscr F(f)\Box \mathscr F(f)$ entail that for all $x^*\in X^*,$
\[\mathscr F(f+g)(x^*)\geq\mathscr F(f)\Box \mathscr F(f)(x^*).\]
Therefore, i) is shown.

We can show ii) analogously.

\end{proof}

\section{Strictly convex bodies and strictly convex functions}

A convex function $f$ defined on a convex set $C$ of a Banach space $X$ is said to be {\it strictly convex} provided
\begin{equation}\label{3.1}
\lambda f(x)+(1-\lambda)f(y)>f(\lambda x+(1-\lambda)y),\;\;\forall\;x\neq y\in C, \;0<\lambda<1.
\end{equation}
If, in addition, $f$ is continuous, then (\ref{3.1}) is equivalent to
$$\frac{1}2\Big(f(x)+f(y))>f(\frac{x+y}2),\;\forall x\neq y\in X.$$

A  function $f$ on a Banach space $X$ is said to be {\it coercive}, if it satisfies $\lim_{\|x\|\rightarrow\infty}f(x)=+\infty$. For example, for every nontrivial Banach space $X=(X,\|\cdot\|)$, the norm $\|\cdot\|$ is a coercive convex function.
\begin{lemma}\label{3.2}
Suppose that $f$ is a nonnegative-valued continuous  coercive convex function. Then

i) for each $r>\inf_{x\in X}f(x)$, the level set $B_r\equiv\{x\in X: f(x)\leq r\}$ is a convex body of $X$;

ii) if $f$ is strictly convex then for each such $r$, the convex body $B_r$ is strictly convex.
\end{lemma}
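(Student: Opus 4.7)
The lemma is almost entirely a bookkeeping exercise once the definitions are unpacked, so the plan is to verify each required property of $B_r$ in turn, relying on the listed hypotheses (continuity, convexity, nonnegativity, coercivity, and in part (ii) strict convexity) one at a time.

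For part (i), I would first observe that $B_r = f^{-1}((-\infty,r])$ is closed because $f$ is continuous, and convex because $f$ is convex (the standard computation $f(\lambda x+(1-\lambda)y)\le \lambda f(x)+(1-\lambda)f(y)\le r$). Boundedness is where coercivity enters: if $B_r$ were unbounded, choose a sequence $x_n\in B_r$ with $\|x_n\|\to\infty$; coercivity forces $f(x_n)\to+\infty$, contradicting $f(x_n)\le r$. Finally, for nonempty interior, the hypothesis $r>\inf_{x\in X} f(x)$ supplies some $x_0\in X$ with $f(x_0)<r$, and then continuity of $f$ produces an open ball around $x_0$ on which $f<r$, so this ball lies in $B_r$.

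For part (ii), the key preliminary is to identify the boundary: I would argue that every boundary point $x$ of $B_r$ must satisfy $f(x)=r$. Indeed, if $f(x)<r$, continuity gives a neighborhood where $f<r\le r$, placing $x$ in $\mathrm{int}(B_r)$, a contradiction. With this in hand, suppose $x\in\mathrm{bdr}(B_r)$ is not extreme, so $x=\tfrac{y+z}{2}$ for some $y\ne z$ in $B_r$. Strict convexity of $f$ then yields
\[
f(x)=f\!\left(\tfrac{y+z}{2}\right)<\tfrac{1}{2}\bigl(f(y)+f(z)\bigr)\le \tfrac{1}{2}(r+r)=r,
\]
contradicting $f(x)=r$. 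Hence every boundary point of $B_r$ is extreme, which is the definition of a strictly convex body.

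There is no serious obstacle here; the only place one must be a little careful is the boundary characterization in (ii), where one has to exclude the scenario $f(x)=r$ at an interior point. This is handled automatically by the argument above (interior forces $f<r$ via continuity only in the direction $f(x)<r\Rightarrow x\in\mathrm{int}(B_r)$, which is what is actually needed), so the proof reduces to the two short arguments outlined. I would write part (i) as four short verifications and part (ii) as the single contradiction displayed above.
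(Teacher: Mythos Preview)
Your proposal is correct and follows essentially the same route as the paper: verify closedness, convexity, boundedness, and nonempty interior for (i), then in (ii) use that boundary points satisfy $f(x)=r$ and derive a contradiction from strict convexity. If anything, you are slightly more careful than the paper, which simply asserts $\mathrm{bdr}(B_r)=\{x:f(x)=r\}$ without justification, whereas you isolate and prove the one inclusion actually needed.
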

\begin{proof}
i) Clearly, convexity and continuity of $f$ implies that for each  $r>\inf_{x\in X}f(x)$, the level set $B_r\equiv\{x\in X: f(x)\leq r\}$ is a closed convex set with nonempty interior.  Since $f$ is coercive, $B_r$ is bounded.
Therefore, $B_r$ is a convex body.

ii) Suppose that $B_r$ is not strictly convex for some $r>\inf_{x\in X}f(x)$.  Note that the boundary ${\rm bdr}B_r$ of the convex body $B_r$ is $\{x\in X: f(x)=r\}$. Then there exist $x\neq y\in{\rm bdr}B_r$ and $0<\lambda <1$ so that $z\equiv\lambda x+(1-\lambda)y\in{\rm bd}B_r$. Therefore,
$$\lambda f(x)+(1-\lambda)f(y)=r=f(z)=f(\lambda x+(1-\lambda)y).$$
This  contradicts to that $f$ is strictly convex.
\end{proof}
\begin{remark}\label{3.3}
The converse version is not true. For example, assume that $X=(X,\|\cdot\|)$ is a strictly convex Banach space. For each $r>\inf_{x\in X}\|x\|=0$,
$B_r$ is a strictly convex body. But $\|\cdot\|$ is not a strictly convex function. Indeed, let $x\in X$ with $\|x\|=1$ and $y=0$. Then $\|\frac{x+y}2\|=\frac{1}2=\frac{1}2\|x\|+\frac{1}2\|y\|$.  However, the following conclusion holds.
\end{remark}
 Recall that a  convex function $f$ is said to be a  positive quadratic homogenous  if it satisfies $f(kx)=k^2f(x),\;\forall x\in X, k\in\R^+$.

\begin{lemma}\label{3.4}
Let $f$ be a positive quadratic homogenous continuous coercive convex function. Then $f$ is strictly convex if and only if for every $r>0$, the level set $B_r\equiv\{x\in X: f(x)\leq r\}$ is a strictly convex body.
\end{lemma}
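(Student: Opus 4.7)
The forward direction is immediate from Lemma \ref{3.2}(ii): a strictly convex $f$ has strictly convex sublevel sets. So the content is the converse. I would begin by noting that positive quadratic homogeneity gives $B_r = \sqrt{r}\,B_1$ for every $r > 0$, so the hypothesis reduces to strict convexity of the single body $B_1$. Also, $f(0) = 0$ and $f(x) > 0$ for every $x \neq 0$ (otherwise $f(tx) = t^2 f(x) = 0$ for all $t \geq 0$, contradicting coercivity).

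I would then argue by contradiction. Suppose $f$ is not strictly convex, so there exist $x \neq y$ in $X$ and $\lambda \in (0,1)$ with
\[ \lambda f(x) + (1-\lambda) f(y) = f(\lambda x + (1-\lambda) y). \]
A quick check rules out $x = 0$ or $y = 0$: if $x = 0$, then the equality becomes $(1-\lambda) f(y) = (1-\lambda)^2 f(y)$ by quadratic homogeneity, forcing $f(y) = 0$ and hence $y = 0$, contradicting $x \neq y$. So both $a := \sqrt{f(x)}$ and $b := \sqrt{f(y)}$ are strictly positive. Set $u := x/a$ and $v := y/b$; by quadratic homogeneity $f(u) = f(v) = 1$, i.e.\ $u, v \in \mathrm{bdr}(B_1)$.

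Next I would renormalize the convex combination using homogeneity: setting $\mu := \lambda a /(\lambda a + (1-\lambda) b) \in (0,1)$, one has
\[ \lambda x + (1-\lambda) y = (\lambda a + (1-\lambda) b)\bigl( \mu u + (1-\mu) v \bigr), \]
so $f(\lambda x + (1-\lambda) y) = (\lambda a + (1-\lambda) b)^2 \, f(\mu u + (1-\mu) v)$. Substituting into the equality gives
\[ f\bigl(\mu u + (1-\mu) v\bigr) = \frac{\lambda a^2 + (1-\lambda) b^2}{(\lambda a + (1-\lambda) b)^2}. \]
Jensen's inequality for the strictly convex function $t \mapsto t^2$ yields that the right-hand side is $\geq 1$, with equality iff $a = b$. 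On the other hand, plain convexity of $f$ gives $f(\mu u + (1-\mu) v) \leq \mu \cdot 1 + (1-\mu) \cdot 1 = 1$. Therefore equality must hold throughout, so $a = b$ (hence $\mu = \lambda$) and $f(\lambda u + (1-\lambda) v) = 1$, i.e.\ $\lambda u + (1-\lambda) v \in \mathrm{bdr}(B_1)$. Finally, since $a = b$ and $x \neq y$ force $u \neq v$, this exhibits a boundary point of $B_1$ as a nontrivial convex combination of two distinct boundary points, contradicting strict convexity of $B_1$.

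I do not expect a serious obstacle: the engine is simply quadratic homogeneity, which lets me reduce the inequality to the interface between strict convexity of $t \mapsto t^2$ (forcing $a=b$) and convexity of $f$ (forcing $f(\mu u + (1-\mu)v) = 1$), so that the mildest "flat spot" in $f$ translates directly into a flat segment on $\mathrm{bdr}(B_1)$. The only small care point is handling the degenerate case $x = 0$ or $y = 0$ so that the normalizations $u, v$ are well-defined.
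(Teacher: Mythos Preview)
Your proof is correct and follows the same core idea as the paper's (rescale the two points onto a common level set and exploit the strict convexity of $t\mapsto t^2$), but your execution is tighter. The paper starts from the midpoint equality, first argues separately that the affine equality must extend to the whole segment $[x,y]$, then splits into cases $f(x)=f(y)$ versus $f(x)\neq f(y)$; in the unequal case it rescales $y$ to $z=\alpha y$ with $f(z)=f(x)=r$ and verifies by direct computation that $f\big(\tfrac{x+z}{2}\big)=r\big(\tfrac{1+\alpha}{2}\big)^2\tfrac{1}{\alpha}>r$, contradicting convexity. Your use of Jensen's inequality for $t\mapsto t^2$ handles both cases at once and makes the segment-extension step unnecessary, since you never assume $\lambda=\tfrac12$. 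The paper's explicit inequality is nothing other than your Jensen step specialized to $\lambda=\tfrac12$, so the two arguments are really the same engine; yours just packages it more economically.
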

\begin{proof}
By Lemma \ref{3.2} ii), it suffices to show the sufficiency. Suppose the contrary that $f$ is not strictly convex. Then there exist $x\neq y\in X$  such that
\begin{equation}\label{3.5} f\Big(\frac{x+y}2\Big)=\frac{1}2\Big(f(x)+f(y)\Big).\end{equation}

Note that both $g(t)\equiv f(tx+(1-t)y)$ and $h(t)\equiv tf(x)+(1-t)f(y)$ are convex functions in  $t\in [0,1]$ with $h\geq g$ on $[0,1]$ and with $h(\frac{1}2)=g(\frac{1}2)$. Since $h$ is affine,
we can see that
\begin{equation}\nonumber
{\rm either}\;\;g(t)\geq h(t),\;\forall t\in[0,\frac{1}2];\;\;{\rm or,}\;g(t)\geq h(t),\; \forall t\in[\frac{1}2,1].
\end{equation}
This and $h\geq g$ imply
\begin{equation}\nonumber
{\rm either}\;\;g(t)=h(t),\;\forall t\in[0,\frac{1}2];\;\;{\rm or,}\;g(t)=h(t),\;\forall t\in[\frac{1}2,1].
\end{equation}
This, $h\geq g$ and affine property of $h$ further entail
\begin{equation}\nonumber
g(t)=h(t),\;\forall t\in[0,1].
\end{equation}
That is,
\begin{equation}\label{3.6}
f(\lambda x+(1-\lambda)y)=\lambda f(x)+(1-\lambda)f(y),\;\forall\;\lambda\in[0,1].
\end{equation}

Since $f$ is a  positive quadratic homogenous continuous coercive convex function, $f(u)>0$ for all $u\neq0$. Therefore, $x,y$ are linearly independent and which entails that $x\neq0\neq y$.
If $f(x)=f(y)\equiv r>0$, then
 (\ref{3.6}) implies that the convex body $B_r$ is not strictly convex, and this is a contradiction.

 Assume $f(y)>f(x)>0$, and let $\alpha=\sqrt{\frac{f(x)}{f(y)}}$, and $z=\alpha y$. Then $0<\alpha<1$ and $f(x)=f(z)\equiv r>0$.
Next, let $0<\lambda<1,\;\gamma=\frac{\lambda}{\lambda+(1-\lambda)\alpha}$.  It follows from (\ref{3.4}) again that
$$f(\lambda x+(1-\lambda)z)=f(\lambda x+(1-\lambda)\alpha y)=\Big(\lambda+(1-\lambda)\alpha\Big)^2f(\gamma x+(1-\gamma)y)$$
$$=\Big(\lambda+(1-\lambda)\alpha\Big)^2\Big(\gamma f(x)+(1-\gamma)f(y)\Big)$$
$$=\Big(\lambda+(1-\lambda)\alpha\Big)^2\Big(\gamma f(x)+\frac{(1-\gamma)}{\alpha^2}f(z)\Big)$$
$$=r\Big(\lambda+(1-\lambda)\alpha\Big)^2\Big(\gamma+\frac{(1-\gamma)}{\alpha^2})\Big).\;\;\;\;\;\;\;\;$$
Let $\lambda=\frac{1}2$. It follows from the equalities above that
$$f(\lambda x+(1-\lambda)z)=r\Big(\lambda+(1-\lambda)\alpha\Big)^2\Big(\gamma+\frac{(1-\gamma)}{\alpha^2})\Big)\;\;\;\;\;\;\;\;$$
$$=r\Big(\frac{1}2+\frac{\alpha}2\Big)^2\Big(\frac{1}{1+\alpha}+\frac{1}{\alpha(1+\alpha)}\Big)
=r\Big(\frac{1}2+\frac{\alpha}{2}\Big)^2\frac{1}\alpha>r.$$
This entails that $\alpha=1$ and consequently, $f(x)=r=f(y)$. This is a contradiction.
\end{proof}
The following lemma directly follows from the definition of strictly convex functions.
\begin{lemma}\label{3.7}
Let $f,g$ be two convex functions defined on a Banach space $X$. If one of $f$ and $g$ is strictly convex, the $f+g$ is again strictly convex.
\end{lemma}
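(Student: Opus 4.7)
The plan is to prove this directly from the definition of strict convexity given in equation (\ref{3.1}), with essentially no technical content beyond adding inequalities termwise. Without loss of generality, assume $f$ is the strictly convex one; the argument is symmetric in $f$ and $g$.

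First, I would fix arbitrary $x \neq y \in X$ and $\lambda \in (0,1)$. From strict convexity of $f$, I get the strict inequality
\[
f(\lambda x + (1-\lambda) y) < \lambda f(x) + (1-\lambda) f(y),
\]
while ordinary convexity of $g$ yields the (not necessarily strict) inequality
\[
g(\lambda x + (1-\lambda) y) \leq \lambda g(x) + (1-\lambda) g(y).
\]
Adding these two inequalities preserves the strict inequality on the left-hand side, giving
\[
(f+g)(\lambda x + (1-\lambda) y) < \lambda (f+g)(x) + (1-\lambda) (f+g)(y),
\]
which is exactly the defining condition (\ref{3.1}) for $f+g$ to be strictly convex. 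Since $x \neq y$ and $\lambda \in (0,1)$ were arbitrary, this completes the argument.

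There is no real obstacle here: the only thing to check is that adding a strict inequality to a weak inequality of the same direction yields a strict inequality, which is immediate. The lemma does not require continuity, coercivity, or quadratic homogeneity of $f$ or $g$, so no appeal to the earlier setup of Section 3 is needed; the statement follows purely from the algebraic definition of strict convexity.
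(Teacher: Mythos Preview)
Your proof is correct and matches the paper's approach exactly: the paper does not give a formal proof of this lemma at all, stating only that it ``directly follows from the definition of strictly convex functions,'' which is precisely the termwise-addition argument you wrote out.
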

\begin{lemma}\label{3.8}
Let $B\subset X$ be a convex body. Then the following statements are equivalent.

i) $B$ is strictly convex.

ii) For each fixed $z\in{\rm int}B$, $f=\frac{1}2 p_z^2$ is strictly convex.

iii) There is $z\in{\rm int}B$ so that $f=\frac{1}2 p_z^2$ is strictly convex, where $p_z$ is the Minkowski functional generated by $B-z$.

\end{lemma}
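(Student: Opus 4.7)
The plan is to reduce the lemma to Lemma~\ref{3.4} by identifying the sublevel sets of $f=\tfrac{1}{2}p_z^2$ with positive dilations of $B-z$. The implication $(ii)\Rightarrow(iii)$ is trivial since $\mathrm{int}(B)$ is nonempty by hypothesis; the substance is in $(i)\Leftrightarrow(ii)$, together with $(iii)\Rightarrow(i)$.

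First I would fix $z\in\mathrm{int}(B)$, set $C=B-z$, and verify that $f=\tfrac{1}{2}p_C^2$ meets the hypotheses of Lemma~\ref{3.4}. Positive quadratic homogeneity $f(kx)=k^2 f(x)$ for $k\geq 0$ follows directly from $p_C(kx)=kp_C(x)$ in Proposition~\ref{2.3}; continuity and convexity of $f$ come from $p_C$ being continuous and sublinear, composed with the increasing convex function $t\mapsto t^2$ on $[0,\infty)$. For coercivity, boundedness of $C$ gives $C\subset MB_X$ for some $M>0$, hence $p_C(x)\geq\|x\|/M$ and $f(x)\geq\|x\|^2/(2M^2)$, which tends to $+\infty$ as $\|x\|\to+\infty$.

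The key observation is the level-set identification: for every $r>0$,
\[
\{x\in X:f(x)\leq r\}=\{x\in X:p_C(x)\leq\sqrt{2r}\}=\sqrt{2r}\,C=\sqrt{2r}(B-z).
\]
Since strict convexity of a convex body is preserved under positive scaling and translation, $\sqrt{2r}(B-z)$ is strictly convex if and only if $B$ is strictly convex.

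Putting these together, $(i)\Leftrightarrow(ii)$ is now immediate from Lemma~\ref{3.4}: $f=\tfrac{1}{2}p_z^2$ is strictly convex iff every level set $\sqrt{2r}(B-z)$ is a strictly convex body iff $B$ itself is strictly convex. The direction $(iii)\Rightarrow(i)$ requires even less: knowing that $f$ is strictly convex for a single admissible $z$, Lemma~\ref{3.2}(ii) applied with $r=1/2$ already forces $\{f\leq 1/2\}=B-z$ to be a strictly convex body, hence $B$ is strictly convex. The only point requiring any care is the verification of coercivity and the quadratic homogeneity of $f$ so that Lemma~\ref{3.4} is applicable; once that is granted, there is no genuine obstacle and the rest is bookkeeping.
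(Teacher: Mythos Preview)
Your proposal is correct and follows essentially the same route as the paper: both reduce the equivalence to Lemma~\ref{3.4} via the identification of the sublevel sets of $\tfrac{1}{2}p_z^2$ with dilations of $B-z$, after noting that strict convexity is translation- and scaling-invariant. Your write-up is in fact slightly more explicit than the paper's in verifying the hypotheses (coercivity, quadratic homogeneity) needed to invoke Lemma~\ref{3.4}, and your use of Lemma~\ref{3.2}(ii) for $(iii)\Rightarrow(i)$ is a harmless economy over the paper's second appeal to Lemma~\ref{3.4}.
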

\begin{proof}
i)$\Longrightarrow$ ii). Since $B$ is strictly convex, every point of ${\rm bdr}(B)$ is an extreme point of $B$. Therefore, for each fixed $z\in X$, every point of ${\rm bd}(B-z)$ is an extreme point of $B-z$. In particular,
given $x_0\in{\rm int}B$, every point of ${\rm bdr}(B-x_0)$ is an extreme point of $B-x_0$. Let $p_0$ be the Minkowski functional generated by $B-x_0$. Then $\{x\in X:p_0(x)=1\}={\rm bdr}(B-x_0)$. Strict convexity of $B$ entails that each level set of $f\equiv\frac{1}2p_0^2$ is a strictly convex body. By Lemma \ref{3.4},  $f\equiv\frac{1}2p_0^2$ is  strictly convex.

ii) $\Longrightarrow$ iii). Trivial.

iii) $\Longrightarrow$ i). Since $f=\frac{1}2 p_z^2$ is strictly convex, by Lemma \ref{3.4}, $B-z$ is a strictly convex body. Therefore, $B-z$ is again a strictly convex body.
\end{proof}
\section{Smooth convex bodies and G\^{a}teaux differentiable convex functions}
\begin{lemma}\label{4.1}
Let $B\subset X$ be a convex body. Then the following statements are equivalent.

i) $B$ is G\^{a}teaux smooth;

ii) For each fixed $z\in{\rm bdr}(B)$, $p_z$ is everywhere  G\^{a}teaux differentiable off the origin, where $p_z$ is the Minkowski functional generated by $B_z$;

iii) For each fixed $z\in{\rm bdr}(B)$, $f=\frac{1}2p^2_z$ is everywhere  G\^{a}teaux differentiable;

iv) There is $z\in{\rm bdr}(B)$ so that $p_z$ is everywhere  G\^{a}teaux differentiable off the origin;

v) There is $z\in{\rm bdr}(B)$ so that  $f=\frac{1}2p^2_z$ is everywhere  G\^{a}teaux differentiable.

\end{lemma}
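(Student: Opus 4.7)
The strategy is to show that for each fixed $z$ with $0\in{\rm int}(B-z)$, smoothness of $B$ is equivalent both to G\^ateaux differentiability of $p_z$ off the origin and to G\^ateaux differentiability of $\tfrac{1}{2}p_z^2$ everywhere. Because this equivalence is uniform in $z$, the ``for each fixed $z$'' clauses ii), iii) and the ``there exists $z$'' clauses iv), v) then all collapse to i). I will use throughout that G\^ateaux smoothness of a convex body is translation invariant, so one may replace $B$ by $B-z$ and assume $0\in{\rm int}(B-z)$, in which case $p_z$ is a continuous Minkowski functional on $X$ with level surface $\{p_z=1\}={\rm bdr}(B-z)$, exactly as in Lemma \ref{3.8}.

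For i) $\Leftrightarrow$ ii), the key is Proposition \ref{2.4.5}: for $x$ with $p_z(x)=1$, the subdifferential $\partial p_z(x)$ is precisely the set of $x^*\in X^*$ whose level hyperplane $\{y:\langle x^*,y\rangle=1\}$ supports $B-z$ at $x$. Hence uniqueness of the supporting hyperplane at $x$ amounts to $\partial p_z(x)$ being a singleton, which by Proposition \ref{2.4.3} is equivalent to G\^ateaux differentiability of $p_z$ at $x$. Positive $1$-homogeneity $p_z(tx)=t\,p_z(x)$ for $t>0$ forces $\partial p_z(tx)=\partial p_z(x)$, so single-valuedness on the level surface $\{p_z=1\}$ propagates to single-valuedness on all of $X\setminus\{0\}$.

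For ii) $\Leftrightarrow$ iii), write $\tfrac{1}{2}p_z^2=\phi\circ p_z$ with the smooth function $\phi(s)=s^2/2$. At $x\neq 0$ one has $p_z(x)>0$ and $\phi'(p_z(x))=p_z(x)\neq 0$, so by the chain rule G\^ateaux differentiability of $p_z$ at $x$ is equivalent to that of $\tfrac{1}{2}p_z^2$ at $x$. At $x=0$ the squared functional is automatically G\^ateaux differentiable with zero derivative, since for each $y\in X$ the difference quotient $\tfrac{1}{2t}p_z(ty)^2$ equals $\tfrac{t}{2}p_z(y)^2$ for $t>0$ and $\tfrac{t}{2}p_z(-y)^2$ for $t<0$, both tending to $0$ as $t\to 0$. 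The implications ii) $\Rightarrow$ iv) and iii) $\Rightarrow$ v) are immediate, and iv) $\Rightarrow$ i), v) $\Rightarrow$ i) follow by running the chain above for the particular $z$ whose existence is asserted.

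The main bookkeeping subtlety is the asymmetry between ``off the origin'' in ii), iv) and ``everywhere'' in iii), v): $p_z$ itself is never G\^ateaux differentiable at $0$, since $\partial p_z(0)$ equals the polar body of $B-z$ and cannot be a singleton in a nontrivial space. This asymmetry is absorbed precisely by the trivial zero-derivative of $p_z^2$ at the origin, which is the only place where the argument requires content beyond Propositions \ref{2.4.3}, \ref{2.4.5} and the positive homogeneity of the Minkowski functional.
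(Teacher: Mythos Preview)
Your proposal is correct and follows essentially the same approach as the paper's own proof: both arguments identify supporting hyperplanes of $B-z$ at a boundary point $x$ with elements of $\partial p_z(x)$ (the paper via the separation theorem directly, you via Proposition~\ref{2.4.5}), invoke Proposition~\ref{2.4.3}/\ref{2.4.2} to translate single-valuedness into G\^ateaux differentiability, and pass between $p_z$ and $\tfrac12 p_z^2$ via the chain rule. Your write-up is in fact slightly more careful than the paper's on two minor points---the explicit use of positive homogeneity to propagate single-valuedness from $\{p_z=1\}$ to all of $X\setminus\{0\}$, and the explicit verification that $\tfrac12 p_z^2$ has zero G\^ateaux derivative at the origin---both of which the paper leaves implicit.
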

\begin{proof}
Recall that $B$ is G\^{a}teaux smooth if and only if for each $u\in{\rm bdr}(B)$ there is a unique closed hyperplane $H$ so that $H\bigcap{\rm int}(B)=\emptyset$ and  $u\in H\bigcap B$. Since $B$ is G\^{a}teaux smooth if and only if $B-z$ is G\^{a}teaux smooth for each $z\in X$, without loss of generality, we assume that $0\in{\rm int}B$.

i) $\Longrightarrow$ ii).
 Clearly, ${\rm bdr}(B)=\{x\in X: p_B(x)=1\}.$ Fix $u\in {\rm bdr}(B)$. Let $H_u$ be the unique supporting hyperplane of $B$ at $u$. By the separation theorem of convex sets, there exists $x^*\in X^*$ so that $$\langle x^*,x\rangle<\langle x^*,u\rangle=1=\sup_{z\in B}\langle x^*,z\rangle, \forall x\in{\rm int}(B).$$
This implies that $H=\{z\in X: \langle x^*,z\rangle=1=\langle x^*,u\rangle=p_B(u)\}$. It is easy to check that $x^*\in\partial p_B(u)$. Since every $u^*\in\partial p_B(u)$ satisfies $\langle u^*,u\rangle=p(u)=1$ and $\langle u^*,x\rangle<1$ for all $x\in{\rm int}(B)$, the uniqueness of $H$ entails $H=\{z\in X: \langle u^*,z\rangle=1\}$. Consequently, we obtain $u^*=x^*$, that is, $\partial p_B(u)$ is a singleton. Thus, ii) follows from Proposition \ref{2.4.2}.

ii) $\Longrightarrow$ iii). It suffices to note that for each $z\in{\rm bdr}(B)$, $p_z$ is everywhere  G\^{a}teaux differentiable off the origin entails that $p^2_z$ is everywhere  G\^{a}teaux differentiable in $X$.

iii) $\Longrightarrow$ iv).  It suffices to note that for each $z\in{\rm bdr}(B)$, $p^2_z$ is everywhere  G\^{a}teaux differentiable  entails that $p_z$ is everywhere  G\^{a}teaux differentiable off the origin.

iv) $\Longrightarrow$ v). It suffices to note that  $p_z$ is everywhere  G\^{a}teaux differentiable off the origin implies that $p^2_z$ is everywhere  G\^{a}teaux differentiable.

v) $\Longrightarrow$ i). Since $f=\frac{1}2p^2_z$ is everywhere  G\^{a}teaux differentiable implies that $p_z$ is everywhere  G\^{a}teaux differentiable off the origin, we obtain that for each $u\in{\rm bd}(B-z)$, there is a unique $x^* (=\partial p_z(u))\in X^*$ so that $\langle x^*,u\rangle=1>\langle x^*,x\rangle$ for all $x\in{\rm int}(B)$. Therefore, $H\equiv\{v\in X: \langle x^*,v\rangle=1\}$ is the unique closed hyperplane satisfying $H\bigcap{\rm int}(B)=\emptyset$ and $u\in H$. Thus, $B-z$ is a G\^{a}teaux smooth convex body.

\end{proof}
\section{Strictly  convex bodies in $\mathfrak C(X)$}
In this section, we will show that for a Banach space $X$, strictly convex bodies are dense in $\mathfrak C(X)$ if and only if $X$ admits an equivalent strictly convex norm, i.e. Theorem  \ref{1.1} stated in the first section.


Let $\mathfrak C(X)$ be the cone of all convex bodies of $X$ with the following two ``linear" operations
\[\alpha B=\{\alpha b: b\in B\}, \;\alpha\in \R, B\in\mathfrak C(X),\]
\[A\oplus B=\overline{\{a+b:a\in A,\;b\in B\}},\;A,B\in\mathfrak C(X)\]
and endowed with the Hausdorff metric $d_H$, 
where $\overline{ D}$ denotes the norm closure of $D\subset X$.
The following property is classical and  easy to prove.
\begin{proposition}\label{5.1}
For every Banach space $X$, $\mathfrak C(X)=(\mathfrak C(X),d_H)$ is a complete metric cone.
\end{proposition}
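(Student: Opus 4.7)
My plan is to prove Proposition~5.1 by handling in turn (a) the cone structure, (b) the metric axioms for $d_H$, and (c) completeness, with the bulk of the work concentrated in the last step.

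First I would check that the two ``linear'' operations preserve $\mathfrak{C}(X)$ (which by the introduction consists of nonempty bounded closed convex subsets of $X$). For $\alpha \in \mathbb{R}$ and $B \in \mathfrak{C}(X)$, the set $\alpha B$ is nonempty, bounded, convex, and closed, being the image of a closed bounded set under the homeomorphism $x \mapsto \alpha x$ (with the trivial modification when $\alpha = 0$). For $A, B \in \mathfrak{C}(X)$ the algebraic sum $A+B$ is nonempty, bounded and convex, but need not be closed in infinite dimensions, which is exactly why the definition of $\oplus$ builds in the norm closure. The closure of a bounded convex set is again bounded and convex, so $A \oplus B \in \mathfrak{C}(X)$. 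The cone identities $\alpha(A \oplus B) = \alpha A \oplus \alpha B$ and $(\alpha+\beta) B = \alpha B \oplus \beta B$ for $\alpha, \beta \geq 0$ follow from the corresponding vector-space identities after taking closures, using that $\overline{C} + \overline{D} \subset \overline{C + D}$.

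Next I would verify the metric axioms. Nonnegativity and symmetry are immediate from the defining formula
$$d_H(A,B) = \inf\{r>0 : A \subset B + rB_X,\; B \subset A + rB_X\}.$$
For the triangle inequality, if $A \subset B + rB_X$ and $B \subset C + sB_X$ then $A \subset C + (r+s)B_X$ by convexity of $B_X$, and iterating on both sides yields $d_H(A,C) \leq d_H(A,B) + d_H(B,C)$. If $d_H(A,B)=0$ then $A \subset B + \varepsilon B_X$ and $B \subset A + \varepsilon B_X$ for every $\varepsilon > 0$, so closedness forces $A \subset B$ and $B \subset A$.

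The heart of the proof is completeness. Let $\{B_n\} \subset \mathfrak{C}(X)$ be $d_H$-Cauchy; then in particular all $B_n$ lie in a common bounded set. I would define
$$B = \bigcap_{n \geq 1} \overline{\bigcup_{k \geq n} B_k},$$
equivalently the set of $x \in X$ with $\lim_n d(x,B_n) = 0$. Closedness is built in, boundedness is inherited, and convexity is the standard midpoint argument: for $x, y \in B$ pick $x_n, y_n \in B_n$ with $\|x-x_n\|, \|y-y_n\| \to 0$ and use convexity of each $B_n$, then pass to the limit. Nonemptyness and the convergence $d_H(B_n, B) \to 0$ are obtained by the classical diagonalization: choose $n_1 < n_2 < \cdots$ with $d_H(B_n, B_m) < 2^{-k}$ for $n,m \geq n_k$; starting from any $x_{n_k} \in B_{n_k}$, inductively pick $x_{n_{k+1}} \in B_{n_{k+1}}$ with $\|x_{n_{k+1}} - x_{n_k}\| < 2^{-k}$, and use completeness of $X$ to obtain a limit point $x \in B$. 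Running this construction with varying starting points yields, for every $\varepsilon > 0$ and every sufficiently large $n$, an $\varepsilon$-net of $B$ inside $B_n$ and vice versa, which is exactly $d_H(B_n, B) \to 0$.

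The main obstacle is completeness: since $X$ is infinite-dimensional I cannot use compactness to extract convergent subsequences, and the limit object $B$ must be produced from the Cauchy condition alone via the diagonalization above. Everything else is a routine transcription of the standard fact that the hyperspace of nonempty closed bounded subsets of a complete metric space is $d_H$-complete, combined with the elementary observation that convexity and boundedness survive $d_H$-limits.
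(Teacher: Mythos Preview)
The paper does not actually prove this proposition: it introduces it with the sentence ``The following property is classical and easy to prove'' and gives no argument. Your proposal is a correct and standard proof of this classical fact---the cone-structure and metric checks are routine, and your completeness argument via the intersection $B=\bigcap_n\overline{\bigcup_{k\ge n}B_k}$ together with the telescoping/diagonalization construction is exactly the usual proof that the hyperspace of nonempty closed bounded subsets of a complete metric space is $d_H$-complete, specialized to convex sets. There is nothing to compare against in the paper, and no gap in your outline.
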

Let $\mathfrak C_0(X)=\{B\in \mathfrak C(X): 0\in B\}$ and  $\mathfrak C_{00}(X)=\{B\in \mathfrak C(X): 0\in{\rm int }B\}$.
\begin{lemma}\label{5.2}
Assume that $X$ is a Banach space. Then

i) $\mathfrak C_0(X)$ is again a complete cone;

ii) $\mathfrak C_{00}(X)$ is a dense open cone of $\mathfrak C_0(X)$;

iii) $\mathfrak C_{00}(X)$ is an open cone of $\mathfrak C(X)$.
\end{lemma}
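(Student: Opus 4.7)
The plan is to prove (iii) first by a Hahn--Banach argument, then derive the openness half of (ii) as an immediate consequence; the density half of (ii) is handled by Minkowski enlargement with a small ball, and (i) reduces to showing that $\mathfrak{C}_0(X)$ is a closed subset of $\mathfrak{C}(X)$ together with trivial closure under the cone operations.

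For (iii), given $B \in \mathfrak{C}_{00}(X)$, I will fix $s > 0$ with $s B_X \subset B$ and show that every $B' \in \mathfrak{C}(X)$ with $d_H(B, B') < s$ lies in $\mathfrak{C}_{00}(X)$. Suppose otherwise: $0 \notin \mathrm{int}(B')$. Then $\mathrm{int}(B')$ is a nonempty open convex set missing $0$, and the geometric Hahn--Banach theorem supplies $x^* \in X^*$ with $\|x^*\|=1$ and $\langle x^*,y\rangle \leq 0$ for all $y \in \mathrm{int}(B')$, hence by continuity for all $y \in B'$. Choosing $r_0 < s$ with $B \subset B' + r_0 B_X$, we would get
\[
s = s\|x^*\| \leq \sup_{y \in B}\langle x^*,y\rangle \leq \sup_{y \in B'}\langle x^*,y\rangle + r_0\|x^*\| \leq r_0,
\]
contradicting $r_0 < s$. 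Cone stability of $\mathfrak{C}_{00}(X)$ under $\alpha B$ (for $\alpha \neq 0$) and $A \oplus B$ is routine, since nonzero dilations are homeomorphisms and $\mathrm{int}(A) + \mathrm{int}(B)$ is an open subset of $A \oplus B$ containing $0$.

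For (ii), openness of $\mathfrak{C}_{00}(X)$ in $\mathfrak{C}_0(X)$ is inherited from its openness in the ambient $\mathfrak{C}(X)$ established in the previous step. For density, given $B \in \mathfrak{C}_0(X)$ and $\varepsilon > 0$, I take $B_\varepsilon := B \oplus \tfrac{\varepsilon}{2} B_X$. Since $0 \in B$, the inclusion $\tfrac{\varepsilon}{2} B_X \subset B + \tfrac{\varepsilon}{2} B_X \subset B_\varepsilon$ puts $0$ in the interior of $B_\varepsilon$, hence $B_\varepsilon \in \mathfrak{C}_{00}(X)$; and the pair $B \subset B_\varepsilon$ together with $B_\varepsilon = \overline{B + \tfrac{\varepsilon}{2} B_X} \subset B + (\tfrac{\varepsilon}{2} + \delta) B_X$ (valid for every $\delta > 0$) forces $d_H(B, B_\varepsilon) \leq \varepsilon/2 < \varepsilon$.

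For (i), cone stability of $\mathfrak{C}_0(X)$ follows from $\alpha \cdot 0 = 0$ and $0 + 0 = 0$; closedness in $\mathfrak{C}(X)$ comes from the observation that $B_n \subset B + d_H(B_n, B) B_X$ places $0$ within vanishing distance of the closed set $B$, so $0 \in B$ whenever $B_n \to B$ in Hausdorff. Completeness of $\mathfrak{C}_0(X)$ then follows from Proposition \ref{5.1}. The only genuine content in the whole lemma is the Hahn--Banach step in (iii); everything else is Hausdorff-metric bookkeeping, the main nuisance being the closure in the definition of $\oplus$, which requires the harmless $\delta$-padding in the density estimate.
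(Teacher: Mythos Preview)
Your proof is correct and follows essentially the same approach as the paper: Minkowski enlargement $C+\varepsilon B_X$ for density in (ii), and the inclusion $\delta B_X\subset C$ to force nearby bodies into $\mathfrak C_{00}(X)$ for openness in (iii). The paper's proof is much terser --- it simply asserts the implication in (iii) without justification --- so your Hahn--Banach separation argument is a natural and correct way to fill in that gap, and your $\delta$-padding to handle the closure in $\oplus$ is a careful detail the paper glosses over.
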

\begin{proof}
i) Clearly, $\mathfrak C_0(X)$ is again a  subcone of $\mathfrak C(X)$ and it is complete in the Hausdorff metric $d_H$.

ii) It is not difficult to observe that $\mathfrak C_{00}(X)$ is  dense in $\mathfrak C_0(X)$. Indeed, for every $\eps>0$, we have $C+\eps B_X\in\mathfrak C_{00}(X)$ with $d_H(C,C+\eps B_X)=\eps$. Thus, $\mathfrak C_{00}(X)$ is  dense in $\mathfrak C_0(X)$.  Since iii) implies  $\mathfrak C_{00}(X)$ is an open set of $\mathfrak C_0(X)$, it remains to show iii) that $\mathfrak C_{00}(X)$ is open in $\mathfrak C(X)$. Let $C\in\mathfrak C_{00}(X)$. Since $0\in{\rm int}C$, there is $\delta>0$ so that $\delta B_X\subset C$. Therefore, $\{D\in\mathfrak C(X): d_H(C,D)<\delta\}\subset\mathfrak C_{00}(X).$
Consequently, $\mathfrak C_{00}(X)$ is open in $\mathfrak C(X)$.
\end{proof}

Recall that  $\mathscr H^2(X)$ denotes the cone of all  positive quadratic homogenous continuous  convex functions  defined on $X$ endowed with the metric $d$ defined for $p,q\in\mathscr H(X)$ by
 $d(p,q)=\sup_{x\in B_X}|p(x)-q(x)|$, and that $\mathscr H^2_c(X)$ stands for the subcone of $\mathscr H^2(X)$ consisting of all    positive quadratic homogenous continuous coercive convex functions defined on $X$.

\begin{lemma}\label{5.3}
Let $X$ be a Banach space. Then

i)  $\mathscr H^2(X)$ is a complete metric cone;

ii) given $f\in\mathscr H^2(X)$ and $r>0$, $B_r (\equiv\{x\in X: f(x)\leq r\})$ is a convex body if and only if $f$ is coercive;

iii) $\mathscr H^2_c(X)$ is a dense open subset of $\mathscr H^2(X)$;

iv) given $f\in\mathscr H^2(X)$ and $r>0$,  $B_r$ is a strictly convex body if and only if $f$ is strictly convex and coercive;

v) for each $f\in\mathscr H^2(X)$, there is a closed convex set $C\subset X$ with $0\in{\rm int}C$ so that $f=\frac{1}2p_C^2$;

vi) the convex set $C$ defined in v) is a convex body if and only if $f$ is coercive.
\end{lemma}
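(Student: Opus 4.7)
The six items are linked by a single structural observation: for a positive quadratic homogeneous continuous convex $f \geq 0$ one has $f(x) = \|x\|^2 f(x/\|x\|)$ on $X \setminus \{0\}$, so $f$ is coercive if and only if $c_f := \inf_{x \in S_X} f(x) > 0$. The proposal is to verify this equivalence once, then deploy it everywhere. Before anything else I would check that $d$ is actually finite on $\mathscr H^2(X)$: continuity at $0$ together with the identity $f(tx) = t^2 f(x)$ implies that if $f \leq 1$ on $B(0,\delta)$ then $f \leq 1/\delta^2$ on $B_X$, so $\sup_{B_X} f < \infty$ and $d$ is a genuine metric.

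For (i), closure under sums and under multiplication by $\lambda \geq 0$ is immediate from the definition. For completeness I would take a $d$-Cauchy sequence $(f_n)$ and note that the identity $f_n(x) = \|x\|^2 f_n(x/\|x\|)$ upgrades uniform convergence on $B_X$ to uniform convergence on every bounded set; the pointwise limit $f$ is then nonnegative, positive quadratic homogeneous, convex, and continuous.

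For (ii), convexity and continuity of $f$ with $f(0) = 0$ make $B_r = \{f \leq r\}$ closed, convex, and a neighbourhood of the origin, so ``convex body'' reduces to ``bounded''. Using the equivalence above, if $f$ is coercive then $B_r$ sits in some ball $RB_X$ where $\|x\| \geq R$ forces $f(x) > r$. Conversely, if $c_f = 0$, pick $x_n \in S_X$ with $f(x_n) \to 0$ and set $t_n = \sqrt{r/f(x_n)} \to \infty$; then $t_n x_n \in B_r$ has norm tending to infinity. Claim (iv) is then Lemma~\ref{3.4} read through (ii), using quadratic homogeneity to transfer strict convexity from any single $B_r$ to all of them via the scaling $B_{r'} = \sqrt{r'/r}\,B_r$.

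For (iii), density follows by perturbing $f \mapsto f + \varepsilon\|\cdot\|^2$: this lies in $\mathscr H_c^2(X)$ because $\|\cdot\|^2$ is coercive, and $d(f, f + \varepsilon\|\cdot\|^2) = \varepsilon$. Openness follows because $|f(x) - g(x)| \leq d(f,g)$ on $S_X$, so $d(f,g) < c_f/2$ forces $c_g \geq c_f/2 > 0$. For (v), the explicit choice is $C := \{x : f(x) \leq 1/2\}$, which is closed and convex with $0$ in its interior; direct computation using $f(x/\lambda) = \lambda^{-2} f(x)$ gives
\[
p_C(x) = \inf\{\lambda > 0 : \lambda^2 \geq 2f(x)\} = \sqrt{2f(x)},
\]
so $f = \tfrac{1}{2} p_C^2$. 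Finally, (vi) is exactly (ii) applied at $r = 1/2$.

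The whole argument is essentially routine once the equivalence ``$f$ coercive $\iff c_f > 0$'' is isolated; the only step with any subtlety is verifying in (i) that $d$ is a metric (the boundedness of $f$ on $B_X$) and that the Cauchy-limit again belongs to $\mathscr H^2(X)$ rather than merely being a well-defined function on $B_X$, which is where the quadratic-homogeneity identity is pulling the most weight.
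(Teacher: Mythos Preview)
Your proposal is correct and follows essentially the same route as the paper. The paper's own proof dismisses (i) as ``Trivial'', handles (ii), (iii), (v), (vi) exactly as you do (perturbation by $\varepsilon\|\cdot\|^2$ for density, the lower bound $f\geq\delta\|\cdot\|^2$ for openness, the explicit choice $C=\{f\leq\tfrac12\}$ with $p_C=\sqrt{2f}$), and for (iv) simply cites Lemma~\ref{3.4}. Your organizing device $c_f=\inf_{S_X}f>0\iff f$ coercive is the same observation the paper uses implicitly when it writes $f\geq\delta\|\cdot\|^2$; the extra care you take in (i) to check that $d$ is finite and that the Cauchy limit remains in $\mathscr H^2(X)$ fills in detail the paper omits.
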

\begin{proof}
i) Trivial.

ii) Let $f\in\mathscr H^2(X)$ be coercive and $r>0$. Since $f(x)\geq0$ with $f(0)=0$, and since $f$ is continuous and convex, $B_r\equiv\{x\in X: f(x)\leq r\}$ is a closed convex set with ${\rm int}B_r\neq\emptyset$. Coerciveness of $f$ entails that $B_r$ is bounded. Conversely, if $B_r$ is a convex body for some $r>0$, then coerciveness of $f$ follows from the homogeneity of $f$ and boundedness of $B_r$.

iii) For each $f\in\mathscr H^2(X)$ and $\eps>0$, let $g_\eps=\eps\|\cdot\|^2$. Then $f+g_\eps\in \mathscr H^2_c(X)$ with $d(f,g_\eps)\leq\eps$. Therefore, $\mathscr H^2_c(X)$ is dense in  $\mathscr H^2(X)$. Conversely, for each  $f\in\mathscr H^2_c(X)$, there is $\delta>0$ such that $f\geq\delta\|\cdot\|^2$. This implies that the open neighborhood $\{g\in\mathscr H^2(X): d(f,g)(\equiv\sup_{x\in B_X}|f(x)-g(x)|)<\delta\}$ of $f$ is contained in $\mathscr H^2_c(X)$.

iv) This is just Lemma \ref{3.6}.

v) Let $f\in\mathscr H^2(X)$. Since $f$ is a positive quadratic homogenous continuous  convex function, $p_C\equiv\sqrt{2f}$ is a continuous  positively homogenous function, i.e. a continuous coercive Minkowski functional. Clearly, $p_C$  is generated by $C\equiv\{x\in X:f(x)\leq\frac{1}2\}$. Therefore,  $f=\frac{1}2p_C^2$.

vi) It suffices to note that $C$ is bounded with $0\in{\rm int}C$ if and only if $p_C$ is continuous and coercive, which is equivalent to $f$ is continuous coercive.
\end{proof}
\begin{theorem}\label{5.4}
Suppose that $X$ is a Banach space. Then $T:\mathfrak C_{00}(X)\rightarrow\mathscr H^2_c(X)$ defined for $B\in\mathfrak C_{00}(X)$ by  $T(B)=\frac{1}2p^2_B$ is a fully order-reversing locally Lipschitz isomorphism.
\end{theorem}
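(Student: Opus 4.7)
The plan is to verify in turn that $T(B) = \tfrac{1}{2} p_B^2$ is (a) well-defined into $\mathscr H_c^2(X)$, (b) bijective, (c) order-reversing with respect to set inclusion on $\mathfrak C_{00}(X)$ and pointwise order on $\mathscr H_c^2(X)$, and (d) bi-locally-Lipschitz. For (a): whenever $B \in \mathfrak C_{00}(X)$, Proposition \ref{2.3} makes $p_B$ a continuous sublinear functional, and the inclusion $\delta B_X \subseteq B \subseteq R B_X$ (for some $\delta,R>0$) gives the two-sided bound $R^{-1}\|\cdot\| \leq p_B \leq \delta^{-1}\|\cdot\|$; thus $\tfrac{1}{2}p_B^2$ is a positive quadratic homogeneous continuous coercive convex function. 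For (b): Lemma \ref{5.3}(v)(vi) supplies the inverse assignment $f \mapsto \{x : f(x) \leq \tfrac{1}{2}\}$, and a routine check confirms it is a two-sided inverse of $T$. For (c): the standard identity $B_1 \subseteq B_2 \iff p_{B_1} \geq p_{B_2}$ immediately yields $B_1 \subseteq B_2 \iff T(B_1) \geq T(B_2)$ pointwise, so $T$ is fully order-reversing.

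The heart of the matter is the local bi-Lipschitz property. For the forward direction, fix $B_0 \in \mathfrak C_{00}(X)$ and choose $\delta, R > 0$ with $\delta B_X \subseteq B_0 \subseteq R B_X$; on the neighborhood $U = \{B : d_H(B, B_0) < \delta/2\}$ every $B$ satisfies $\tfrac{\delta}{2} B_X \subseteq B \subseteq (R + \tfrac{\delta}{2}) B_X$, whence $p_B(x) \leq (2/\delta)\|x\|$ for all $x$. For $B_1, B_2 \in U$ with $d_H(B_1, B_2) = \eps$, the containment $B_1 \subseteq B_2 + \eps B_X$ together with $\eps B_X \subseteq (2\eps/\delta) B_2$ and convexity gives $B_1 \subseteq (1 + 2\eps/\delta) B_2$, hence $p_{B_2} \leq (1 + 2\eps/\delta) p_{B_1}$; by symmetry
\[ |p_{B_1}(x) - p_{B_2}(x)| \leq (2\eps/\delta)\max\{p_{B_1}(x), p_{B_2}(x)\}. \]
Factoring $|p_{B_1}^2 - p_{B_2}^2| = |p_{B_1} - p_{B_2}|(p_{B_1} + p_{B_2})$ and bounding each factor on $B_X$ yields
\[ d(T(B_1), T(B_2)) \leq \frac{8}{\delta^3}\, d_H(B_1, B_2), \]
the required local Lipschitz bound for $T$.

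For the inverse direction, fix $f_0 \in \mathscr H_c^2(X)$ and choose $c_0 > 0$ with $f_0 \geq c_0 \|\cdot\|^2$ (such a $c_0$ exists because $f_0$ is positively quadratic homogeneous and coercive). On $V = \{h : d(h, f_0) < c_0/2\}$, quadratic homogeneity upgrades $d(h,f_0)<c_0/2$ to $|h(x) - f_0(x)| \leq (c_0/2)\|x\|^2$ for every $x \in X$, so every $h \in V$ satisfies $h \geq (c_0/2)\|\cdot\|^2$. For $f, g \in V$, any $x \in T^{-1}(f)$ satisfies $\|x\| \leq 1/\sqrt{c_0}$ and $|g(x) - f(x)| \leq d(f,g)\|x\|^2 \leq d(f,g)/c_0$. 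When $g(x) > \tfrac{1}{2}$, setting $t = 1/\sqrt{2g(x)}$ places $tx$ on the boundary of $T^{-1}(g)$, and the estimate $\sqrt{1+u} \leq 1 + u/2$ gives
\[ \|x - tx\| = (1-t)\|x\| \leq \frac{d(f,g)}{c_0} \cdot \frac{1}{\sqrt{c_0}} = \frac{d(f,g)}{c_0^{3/2}}. \]
Symmetrizing yields $d_H(T^{-1}(f), T^{-1}(g)) \leq d(f,g)/c_0^{3/2}$, the Lipschitz bound for $T^{-1}$.

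The main technical obstacle is the second half: translating a uniform sup-norm estimate on $B_X$ into a Hausdorff estimate on level sets requires a radial retraction and a quantitative handle on both how far a point of $T^{-1}(f)$ can be from the origin and how $g$ can differ from $f$ at that point. Both quantities are controlled by the uniform lower bound $c_0 \|\cdot\|^2$, and the final Lipschitz constant $1/c_0^{3/2}$ degenerates as that lower bound degenerates, which is precisely what forces the statement to be \emph{locally} rather than globally Lipschitz.
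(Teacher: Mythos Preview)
Your argument is correct. The overall architecture matches the paper's---establish that $T$ is a well-defined order-reversing bijection, then prove local Lipschitz bounds in each direction---but your execution of the Lipschitz estimates differs in instructive ways.

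For the forward direction, the paper estimates $p_B - p_{B\oplus\alpha B_X}$ by computing $p_{B\oplus\alpha B_X}(x)$ explicitly along the ray through $x$, obtaining $p_{B\oplus\alpha B_X}(x) = p_B(x)/(1+\alpha p_B(x)/\|x\|)$. You instead absorb $\eps B_X$ directly into a dilate of $B_2$ via $\eps B_X \subseteq (2\eps/\delta)B_2$, giving the one-line containment $B_1\subseteq(1+2\eps/\delta)B_2$; this is cleaner and yields the same order of constant. For the inverse direction, the paper factors $T^{-1}$ as $\mathscr H^2_c(X)\xrightarrow{\,S\,}\mathscr M_c(X)\xrightarrow{\,U\,}\mathfrak C_{00}(X)$ with $S(f)=\sqrt{2f}$ and $U(p_C)=C$, proving each piece locally Lipschitz separately (the $U$-step via auxiliary sets $\{x:p_D(x)+r\|x\|\le 1\}$). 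You bypass the intermediate cone $\mathscr M_c(X)$ entirely: the uniform lower bound $h\ge(c_0/2)\|\cdot\|^2$ on the neighborhood $V$ controls both the diameter of the level sets and the pointwise discrepancy $|f-g|$ there, and the radial retraction $x\mapsto x/\sqrt{2g(x)}$ lands on $\partial T^{-1}(g)$ with an error governed by $1-1/\sqrt{1+u}\le u/2$. Your route is more self-contained and makes the dependence of the Lipschitz constant on the coercivity parameter $c_0$ transparent; the paper's factorization, on the other hand, isolates the passage Minkowski-functional $\to$ body as a reusable lemma.
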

\begin{proof}
We first show that $T: \mathfrak C_{00}(X)\rightarrow \mathscr H^2_c(X)$ defined for $B\in\mathfrak C_{00}(X)$ by  $T(B)=\frac{1}2p^2_B$ is fully order-reversing locally Lipschitz.

Since every $B\in\mathfrak C_{00}(X)$ is a convex body with $0\in{\rm int}(B)$, $T$  is  order-reversing. Conversely, for each $f\in\mathscr H^2_c(X)$, let $p=\sqrt{2f}$. Since $f$ is positive quadratic homogenous continuous  convex, $p$ is a continuous  coercive Minkowski functional. Consequently, $B\equiv\{x\in X: p(x)\leq1\}$ is a convex body with $0\in{\rm int}(B)$ so that $T(B)=f$. Therefore, $T: \mathfrak C_{00}(X)\rightarrow \mathscr H^2_c(X)$ is fully order-reversing.

 Next, we show that $T$ is locally Lipschitz.  For any fixed $A\in\mathfrak C_{00}(X)$, there is $\beta>0$ such that $2\beta B_X\subset A$.  Note that for each $\beta>\delta>0$, and for any $B\in\mathfrak C(X)$, $B\in\mathfrak A_{\delta}\equiv\{C\in\mathfrak C(X): d_H(A,C)<\delta\}$ entails that $\delta B_X\subset B\in\mathfrak C_{00}(X)$, and  that $B\subset A\oplus\delta B_X$ and $A\subset B\oplus\delta B_X$. We claim that $T$ is Lipschitz on the neighborhood $\mathfrak A_{\delta}$ of $A$.

For any $B, C\in \mathfrak A_{\delta}$, let $\alpha=d_H(B,C)$. Then $B\subset C\oplus\alpha B_X$ and $C\subset B\oplus\alpha B_X$. Consequently,  $p_{B\oplus\alpha B_X}\leq p_C$ and $p_{C\oplus\alpha B_X}\leq p_B$. Set $K_1=\{x\in B_X: p_B(x)\geq p_C(x)\}$ and $K_2=\{x\in B_X: p_B(x)\leq p_C(x)\}.$ Then
\[\sup_{x\in B_X}|p_B(x)-p_C(x)|=\max\Big\{\sup_{x\in K_1}(p_B(x)-p_C(x)),\sup_{x\in K_2}(p_C(x)-p_B(x))\Big\}\]
\[\leq\max \Big\{\sup_{x\in B_X}(p_B(x)-p_{B\oplus\alpha B_X}(x)), \sup_{x\in B_X}(p_C(x)-p_{C\oplus\alpha B_X}(x))\Big\}.\]
Note that for any $x(\neq0)\in X$, $\frac{p_B(x)}{\|x\|}\leq\frac{1}\delta,$ $\frac{x}{p_B(x)}\in{\text{bdr}}B$, and that $\frac{x}{p_B(x)}+\alpha\frac{x}{\|x\|}\in{\text{bdr}}(B\oplus\alpha B_X).$
Then \[p_{B\oplus\alpha B_X}(x)=\frac{p_B(x)\|x\|}{\|x||+\alpha p_B(x)}=\frac{p_B(x)}{1+\alpha\frac{p_B(x)}{\|x\|}}\geq\frac{p_B(x)}{1+\frac{\alpha}{\delta}}.\]
Therefore, $x\in K_1$ entails
\[0\leq p_B(x)-p_{B\oplus\alpha B_X}(x)\leq p_B(x)\Big(1-\frac{1}{1+\frac{\alpha}{\delta}}\Big)\leq p_B(x)\Big(\frac{\alpha}\delta\Big)=p_B(x)\Big(\frac{d_H(B,C)}\delta\Big).\]
We can show the following inequality in the same way:
For every $x (\neq0)\in K_2$,
\[0\leq p_C(x)-p_{C\oplus\alpha B_X}(x)\leq p_C(x)\Big(\frac{d_H(B,C)}\delta\Big).\]
Consequently,
\[d(T(B),T(C))=\sup_{x\in B_X}|T(B)(x)-T(C)(x)|\;\;\;\;\;\;\;\;\;\;\;\;\;\]\[=\frac{1}2\sup_{x\in B_X}|p^2_B(x)-p^2_C(x)|=
\frac{1}2\sup_{x\in B_X}\Big(p_B(x)+p_C(x)\Big)\cdot|p_B(x)-p_C(x)|\]
\[\leq M{d_H(B,C)},\]
where \[M=\frac{1}{2\delta}\max\Big\{\sup_{x\in B_X}p_B(x),\sup_{x\in B_X}p_C(x)\Big\}\Big(\sup_{x\in B_X}p_B(x)+\sup_{x\in B_X}p_C(x)\Big).\]

Finally, we show that $T^{-1}:\mathscr H^2_c(X)\rightarrow\mathfrak C_{00}(X)$ is again  fully order-reversing locally Lipschitz. By definition, $T^{-1}$ is fully order-reversing if and only if $T$ is fully order-reversing. It remains to show that $T^{-1}$ is locally Lipschitz on $\mathscr H^2_c(X)$.

Let $f\in \mathscr H^2_c(X)$. By Lemma \ref{5.3}, there is  $C\in\mathfrak C_{00}(X)$ such that $f=\frac{1}2p_C^2$. Let $\mathscr M_c(X)$ be the cone of all continuous coercive Minkowski functionals on $X$ endowed with the metric $d_{\mathscr M}$ which is defined for $p,q\in\mathscr M_c(X)$ by
$d_\mathscr M(p,q)=\sup_{x\in B_X}|p(x)-q(x)|$. Then $S: \mathscr H^2_c(X)\rightarrow\mathscr M_c(X)$ defined for $f\in\mathscr H^2_c(X)$ by $S(f)=\sqrt{2f}$ is locally Lipschitz. To show $T^{-1}$ is locally Lipschitz, it suffices to prove that the mapping $U: \mathscr M_c(X)\rightarrow \mathfrak C_{00}(X)$ defined for $p_C\in \mathscr M_c(X)$ by $U(p_C)=C$ is locally Lipschitz.

Note that the metric $d_\mathscr M$ on $\mathscr M_c(X)$ is defined for $g, h\in \mathscr M_c(X)$ by
$d_\mathscr M(g,h)=\sup_{x\in B_X}|g(x)-h(x)|$. Let $g=p_C$, $h=p_D$ with $d_\mathscr M(g,h)=r>0$. We can assume that  $C$ and $D$ are bounded by $\alpha>0$.

Positive homogeneity of $g$ and $h$ entails \[g\leq h+r\|\cdot\|, \;{\rm and\;}\;h\leq g+r\|\cdot\|,\] where $\|\cdot\|$ is the norm of $X$.
This in turn deduces that
\begin{equation}\label{5.5} C_r\equiv\{x\in X: h(x)+r\|x\|\leq1\}=\{x\in X: p_D(x)+r\|x\|\leq1\}\subset C\cap D\end{equation} and
\begin{equation}\label{5.8}D_r\equiv\{x\in X: g(x)+r\|x\|\leq1\}=\{x\in X: p_C(x)+r\|x\|\leq1\}\subset D\cap C.\end{equation}
On the other hand, $x\in X$, $h(x)=p_D(x)=1$ implies
\begin{equation}\label{5.7}1=p_{C_r}(\frac{x}{p_{C_r}(x)})= p_{C_r}(\frac{x}{p_{D}(x)+r\|x\|})=\frac{1}{1+r\|x\|}p_{C_r}(x)\geq\frac{1}{1+r\beta}p_{C_r}(x).\end{equation}
where $\beta=\sup_{x\in{\text{bdr}(D)}}\|x\|\; (>0)$. Therefore, \begin{equation}\label{5.8}D\subset(1+r\beta) C_r\subset(1+r\beta)(C\cap D)\subset(1+r\beta)C.\end{equation}  Analogously,
 \begin{equation}\label{5.9} C\subset(1+r\gamma)D_r\subset(1+r\gamma)(C\cap D)\subset(1+r\gamma)D,\end{equation}
 where $\gamma=\sup_{x\in{\text{bdr}(C)}}\|x\|.$
 Since $C, D$ are bounded by $\alpha$, it follows from (\ref{5.8}) and (\ref{5.9})
 \[d_H(C,D)\leq d_H(C,(1+r\beta)C)+d_H((1+r\beta)C, D)\]
 \[\leq d_H(C,(1+r\beta)C)+d_H\big((1+r\gamma)(1+r\beta)D,D\big)\]
 \[\leq r\beta\alpha+\big((1+r\gamma)(1+r\beta)-1)\big)\alpha\]
 \[=r(\beta+\gamma+r\beta\gamma)\alpha=L\cdot d_\mathscr M(g,h),\]
 where $L=(\beta+\gamma+r\beta\gamma)\alpha$.
 Thus, we have shown that the mapping $U: \mathscr M_c(X)\rightarrow \mathfrak C_{00}(X)$ is locally Lipschitz.
\end{proof}

\begin{lemma}\label{5.5}
Assume that $X$ is a Banach space admitting an equivalent strictly convex norm. Then the subset
$\mathfrak C_{sc,00}(X)$ consisting of all strictly convex bodies in $\mathfrak C_{00}(X)$ contains a dense $F_\sigma$-subset of
 $\mathfrak C_{0}(X)$.
\end{lemma}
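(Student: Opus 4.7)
\noindent
The plan is to transport the problem across the homeomorphism \(T:\mathfrak C_{00}(X)\to\mathscr H_c^2(X)\), \(B\mapsto\tfrac12 p_B^2\), supplied by Theorem \ref{5.4}; to produce a dense \(F_\sigma\) subset of strictly convex functions in \(\mathscr H_c^2(X)\) using a ``seed'' coming from the hypothesised strictly convex norm; to pull it back to \(\mathfrak C_{00}(X)\); and finally to glue with an exhaustion of \(\mathfrak C_{00}(X)\) by closed subsets of \(\mathfrak C_0(X)\).

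\medskip

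\noindent
Let \(|\cdot|\) be an equivalent strictly convex norm on \(X\) and put \(g:=\tfrac12|\cdot|^2\in\mathscr H_c^2(X)\). A direct check (strict convexity of \(|\cdot|\) together with the elementary inequality \(2|x||y|\le|x|^2+|y|^2\) forces strict inequality in \(|x+y|^2\le 2(|x|^2+|y|^2)\) whenever \(x\neq y\)) shows that \(g\) is strictly convex on \(X\). For each \(n\in\N\) I define
\[
\mathcal S_n:=\Bigl\{f\in\mathscr H_c^2(X):\ f-\tfrac1n g\ \text{is convex on}\ X\Bigr\}.
\]
The three facts to verify are: (a) \(\mathcal S_n\) is closed in \(\mathscr H_c^2(X)\), because \(d\)-convergence is uniform on \(B_X\) and, by positive quadratic homogeneity, pointwise on \(X\), and pointwise limits of convex functions are convex; (b) each \(f\in\mathcal S_n\) is strictly convex via Lemma \ref{3.7} applied to the decomposition \(f=(f-\tfrac1n g)+\tfrac1n g\), whence Lemma \ref{3.8} (with \(z=0\)) places \(T^{-1}(f)\) in \(\mathfrak C_{sc,00}(X)\); (c) \(\bigcup_n\mathcal S_n\) is dense in \(\mathscr H_c^2(X)\), because for \(f\in\mathscr H_c^2(X)\) and \(\eps>0\) one picks \(\lambda\in(0,\eps/\sup_{B_X}g]\), sets \(h:=f+\lambda g\), observes \(d(f,h)\le\eps\), and notes that for any integer \(n\ge 1/\lambda\) the function \(h-\tfrac1n g=f+(\lambda-\tfrac1n)g\) is convex, so \(h\in\mathcal S_n\).

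\medskip

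\noindent
Set \(\mathcal D:=T^{-1}\!\bigl(\bigcup_n\mathcal S_n\bigr)\). Then \(\mathcal D\) is a dense subset of \(\mathfrak C_{00}(X)\) contained in \(\mathfrak C_{sc,00}(X)\), and each \(T^{-1}(\mathcal S_n)\) is closed in \(\mathfrak C_{00}(X)\). To upgrade \(\mathcal D\) to a dense \(F_\sigma\) subset of \(\mathfrak C_0(X)\), I exhaust \(\mathfrak C_{00}(X)=\bigcup_k F_k\) by the sets
\[
F_k:=\bigl\{B\in\mathfrak C_0(X):\ \tfrac1k B_X\subset B\bigr\},
\]
each closed in \(\mathfrak C_0(X)\) by a short Hausdorff-continuity check. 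Taking closed \(C_n\subset\mathfrak C_0(X)\) with \(T^{-1}(\mathcal S_n)=C_n\cap\mathfrak C_{00}(X)\) gives \(\mathcal D=\bigcup_{n,k}(C_n\cap F_k)\), which is \(F_\sigma\) in \(\mathfrak C_0(X)\); density of \(\mathcal D\) in \(\mathfrak C_0(X)\) follows by combining density of \(\mathcal D\) in \(\mathfrak C_{00}(X)\) with density of \(\mathfrak C_{00}(X)\) in \(\mathfrak C_0(X)\) from Lemma \ref{5.2}.

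\medskip

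\noindent
The main obstacle is the \(F_\sigma\) layering on the function side: being strictly convex is \emph{not} a \(d\)-closed property in \(\mathscr H_c^2(X)\) (already on \(\R^2\), \((|x_1|+|x_2|)^2\) is a non-strictly-convex \(d\)-limit of strictly convex members of \(\mathscr H_c^2(\R^2)\)), so I must replace strict convexity by the stronger, \(d\)-closed conditions \(\mathcal S_n\). This replacement is possible precisely because the hypothesis furnishes one positive quadratic homogeneous strictly convex function \(g\); without such a \(g\) the construction collapses. A secondary technical point is that closedness in \(\mathfrak C_{00}(X)\) does not automatically give closedness in \(\mathfrak C_0(X)\), which is exactly why the exhaustion \(\{F_k\}\) is needed.
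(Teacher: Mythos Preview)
Your proof is correct and follows essentially the same route as the paper's: both transport via $T$ to $\mathscr H_c^2(X)$, build the dense $F_\sigma$ family $\bigcup_n\bigl(\tfrac{1}{n}g+\mathscr H^2(X)\bigr)$ (your condition ``$f-\tfrac{1}{n}g$ is convex'' cuts out exactly this set inside $\mathscr H_c^2(X)$, since a convex positive quadratic homogeneous function vanishing at $0$ is automatically nonnegative), and pull back. Your explicit exhaustion $\{F_k\}$ to pass from $F_\sigma$ in $\mathfrak C_{00}(X)$ to $F_\sigma$ in $\mathfrak C_0(X)$ is a detail the paper leaves implicit in its one-line appeal to $\mathfrak C_{00}(X)$ being dense open in $\mathfrak C_0(X)$.
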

\begin{proof}
Since $X$ has an equivalent strictly convex norm, without loss of generality, we can assume that $X=(X,\|\cdot\|)$ is itself strictly convex.   For each $n\in\N$, let $f_n=\frac{1}{2n}\|\cdot\|^2$. Then each of $\{f_n: n\in\N\}$ is continuous, strictly convex and coercive. By Lemma \ref{3.7}, for each $f\in\mathscr H^2(X)$, $g_n\equiv f+f_n, n=1,2,\cdots$ are strictly convex and coercive. Note that for each $n\in\N$, $\mathscr H^2_n(X)\equiv f_n+\mathscr H^2(X)$ is a closed cone of $\mathscr H^2(X)$.   Clearly, $\bigcup_{n=1}^\infty\mathscr H^2_n(X)$ is a dense $F_\sigma$-subset of $\mathscr H^2(X).$ Since $T:\mathfrak C_{00}(X)\rightarrow\mathscr H^2_c(X)$ is a locally Lipschitz isomorphism (Theorem \ref{5.4}), $T^{-1}(\mathscr H^2_n(X))$ is a closed subset of $\mathfrak C_{00}(X)$. Consequently, $\mathfrak F\equiv T^{-1}(\bigcup_{n=1}^\infty\mathscr H^2_n(X))=(\bigcup_{n=1}^\infty T^{-1}\mathscr H^2_n(X))$ (which consists of strictly convex bodies) is a dense $F_\sigma$-subset of $\mathfrak C_{00}(X)$. We finish the proof by noting that $\mathfrak C_{00}(X)$ is a dense open set of $\mathfrak C_{0}(X)$ (Proposition \ref{5.1} iii).
\end{proof}
Now, we restate and prove Theorem  \ref{1.1} as follows.
\begin{theorem}\label{5.10}
 In a Banach space $X$, the set $\mathfrak C_{sc}(X)$ consisting of all strictly convex bodies of $X$  is dense in $\mathfrak C(X)$ if and only if $X$ admits an equivalent strictly convex norm.
\end{theorem}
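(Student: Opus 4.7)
I would prove the two implications separately; the sufficiency direction is essentially a packaging of Lemma \ref{5.5} with a translation argument, while the necessity direction is where the new geometric idea lies.

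\emph{Sufficiency} $(\Leftarrow)$.  Lemma \ref{5.5} asserts, under the strictly convex renorming hypothesis, that $\mathfrak{C}_{sc,00}(X)$ contains a dense $F_\sigma$-subset of $\mathfrak{C}_{0}(X)$, and in particular is dense in $\mathfrak{C}_{0}(X)$.  To upgrade density from $\mathfrak{C}_{0}(X)$ to all of $\mathfrak{C}(X)$, I would translate: given $B \in \mathfrak{C}(X)$, pick any $x_0 \in B$ so that $B - x_0 \in \mathfrak{C}_{0}(X)$; choose a sequence $(D_n) \subset \mathfrak{C}_{sc,00}(X)$ with $d_H(D_n, B-x_0) \to 0$; then $D_n + x_0 \in \mathfrak{C}_{sc}(X)$ and $d_H(D_n + x_0, B) \to 0$, since both strict convexity and the Hausdorff metric are translation invariant.

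\emph{Necessity} $(\Rightarrow)$.  Here I would build an equivalent strictly convex norm by symmetrizing a strictly convex body close to $B_X$. The steps are:

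\noindent Step 1. Since $B_X \in \mathfrak{C}_{00}(X)$ and $\mathfrak{C}_{00}(X)$ is open in $\mathfrak{C}(X)$ by Lemma \ref{5.2}(iii), the density of $\mathfrak{C}_{sc}(X)$ in $\mathfrak{C}(X)$ furnishes some $C \in \mathfrak{C}_{sc}(X) \cap \mathfrak{C}_{00}(X)$.

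\noindent Step 2.  Set $D := C \cap (-C)$; I claim $D$ is a symmetric strictly convex body.  Symmetry is immediate, and $D$ is closed and bounded as an intersection of closed sets contained in the bounded set $C$.  Since $0 \in {\rm int}(C)$ forces $0 \in {\rm int}(-C)$ as well, there exists $\rho > 0$ with $\rho B_X \subset C \cap (-C) = D$, so $0 \in {\rm int}(D)$.  For strict convexity, let $x \in {\rm bdr}(D)$.  Because ${\rm int}(D) = {\rm int}(C) \cap {\rm int}(-C)$, the point $x$ must lie in ${\rm bdr}(C)$ or in ${\rm bdr}(-C)$.  If $x = \tfrac{1}{2}(y+z)$ for some $y,z \in D$, then $y, z$ belong to whichever of $C$ or $-C$ has $x$ on its boundary; strict convexity of that body makes $x$ extreme there, forcing $y = z = x$.

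\noindent Step 3.  By Proposition \ref{2.5}(iv), $p_D$ is an equivalent norm on $X$; its closed unit ball is $D$, which is strictly convex, so $p_D$ is a strictly convex equivalent norm, completing the proof.

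\emph{Main obstacle.}  The only non-routine input is the symmetrization in Step 2: one must verify that intersecting a strictly convex body with its central reflection preserves strict convexity.  This is not deep, but it is the geometric heart of the proof, since it is the one available mechanism (at this point in the paper) for passing from some strictly convex body near $B_X$ to a symmetric such body, which is precisely what is needed to produce a norm.  The remainder is bookkeeping with the Minkowski-functional correspondences already established in Sections 2 and 5.
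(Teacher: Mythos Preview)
Your proposal is correct and follows essentially the same approach as the paper: for necessity you symmetrize a strictly convex body $C$ with $0\in{\rm int}(C)$ via $C\cap(-C)$ and take its Minkowski functional, and for sufficiency you invoke Lemma \ref{5.5} and translate. Your write-up is in fact more careful than the paper's, since you explicitly justify the existence of a strictly convex body containing $0$ in its interior (via the openness of $\mathfrak C_{00}(X)$) and spell out why the intersection $C\cap(-C)$ remains strictly convex.
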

\begin{proof}
Necessity.  Assume that $\mathfrak C_{sc}(X)$   is dense in $\mathfrak C(X)$. Take any $B\in \mathfrak C_{sc}(X)$ with $0\in\text{int}B$, and let $C=B\bigcap(-B)$. Since $B$ is strictly convex, $C$ is again strictly convex.
Let $\||\cdot\||=p_C$, the Minkowski functional generated by $C$. Then it is easy to see that $\||\cdot\||$ is an equivalent strictly convex norm of $X$.

 Sufficiency. Suppose that $X$ admits an equivalent strictly convex norm. Then by Lemma \ref{5.5}, the set $\mathfrak C_{sc,00}(X)$  of all strictly convex bodies in $\mathfrak C_{00}(X)$ contains a dense $F_\sigma$ set of $\mathfrak C_{0}(X)$. We finish the proof by noting $\mathfrak C(X)=X+\mathfrak C_{00}(X)$, and $X+\mathfrak C_{sc,00}(X)=\mathfrak C_{sc}$.

\end{proof}
Recall that a Banach space $X$ is said to be  weakly compactly generated (WCG) if there is a weakly compact subset $W$ of $X$ such that ${\rm span}W$ is dense in $X$. Since every WCG space admits an equivalent strictly convex norm, and every separable Banach space is a WCG space (see, for instance, \cite{Di}), we have the following result.
\begin{corollary}\label{5.11}
If $X$ is a Banach space in one of the following classes, $$\mathfrak F=\{{\rm finite\;dimensional\;spaces}\},$$ $$\mathfrak S=\{{\rm separable\;Banach\;spaces}\},$$ $$\mathfrak S_{WCG}=\{{\rm weakly\;compactly\;generated\;spaces}\},$$ then the set consisting of all strictly convex bodies of $X$  is dense in $\mathfrak C(X)$.
\end{corollary}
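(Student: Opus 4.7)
The plan is to reduce everything immediately to Theorem \ref{5.10}, which asserts that the density of $\mathfrak C_{sc}(X)$ in $\mathfrak C(X)$ is equivalent to the existence of an equivalent strictly convex norm on $X$. Thus my only task is to verify that each of the three classes $\mathfrak F$, $\mathfrak S$, $\mathfrak S_{WCG}$ consists of Banach spaces admitting such a renorming.

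For $\mathfrak F$, this is immediate: on any finite dimensional real vector space I can transport a Euclidean inner product structure, and the resulting Hilbertian norm is strictly convex and, by finite-dimensionality, automatically equivalent to the given norm. For $\mathfrak S_{WCG}$, I would invoke the classical Amir--Lindenstrauss renorming theorem (as referenced in Diestel), which guarantees that every weakly compactly generated space admits an equivalent (in fact locally uniformly convex, hence strictly convex) norm. For $\mathfrak S$, I would note that every separable Banach space is a WCG space: if $\{x_n\}$ is a countable dense subset of the closed unit ball $B_X$, then the set $W=\{0\}\cup\{x_n/n:n\in\N\}$ is norm-compact, a fortiori weakly compact, and its linear span contains the dense set $\{x_n\}$, hence is dense in $X$. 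Therefore $\mathfrak S\subset\mathfrak S_{WCG}$ and the separable case reduces to the WCG case.

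Combining these observations with Theorem \ref{5.10}, in each of the three classes the set $\mathfrak C_{sc}(X)$ is dense in $\mathfrak C(X)$. There is really no technical obstacle here; the corollary is a straightforward packaging of Theorem \ref{5.10} with three standard renorming facts, the only nontrivial one being the Amir--Lindenstrauss theorem, which is quoted from the literature rather than reproved.
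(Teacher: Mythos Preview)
Your proof is correct and follows essentially the same approach as the paper: the paper simply remarks (immediately before stating the corollary) that every WCG space admits an equivalent strictly convex norm and that every separable space is WCG, then invokes Theorem \ref{5.10}. Your version supplies a bit more detail (the explicit construction showing separable $\Rightarrow$ WCG, and the Euclidean argument for the finite-dimensional case), but the logic is identical.
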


\begin{corollary}\label{5.12}
If $X$ is a Banach space such that $X^*$ is $w^*$-separable, 
then  the set consisting of all strictly convex bodies of $X$  is dense in $\mathfrak C(X)$.
\end{corollary}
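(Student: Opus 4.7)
By Theorem \ref{5.10}, the task reduces to showing that whenever $X^*$ is $w^*$-separable, the space $X$ admits an equivalent strictly convex norm. First I would recall that $w^*$-separability of $X^*$ produces a countable subset $\{f_n\}_{n=1}^\infty\subset X^*$ with $\|f_n\|\leq 1$ that separates the points of $X$: for if $D\subset X^*$ is a countable $w^*$-dense set, then $D$ must be total over $X$ (otherwise some nonzero $x\in X$ would lie in the $w^*$-kernel of $\bar D^{\,w^*}=X^*$, a contradiction), and we may normalize.

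The plan is then the standard Clarkson--Day-type renorming. Consider the bounded linear injection
\[
T:X\longrightarrow \ell^2,\qquad Tx=\bigl(2^{-n/2}f_n(x)\bigr)_{n\geq 1},
\]
and define
\[
|||x|||^2=\|x\|^2+\|Tx\|_{\ell^2}^2=\|x\|^2+\sum_{n=1}^\infty 2^{-n}f_n(x)^2.
\]
Since $\|x\|^2\leq |||x|||^2\leq 2\|x\|^2$, the norm $|||\cdot|||$ is equivalent to $\|\cdot\|$ on $X$.

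It remains to verify strict convexity of $|||\cdot|||$. I would carry this out by checking that $|||x+y|||=|||x|||+|||y|||$ with $x,y\neq 0$ forces $x$ and $y$ to be positively proportional. Writing $a_1=\|x\|$, $a_2=\|Tx\|_{\ell^2}$, $b_1=\|y\|$, $b_2=\|Ty\|_{\ell^2}$, the assumption gives
\[
\sqrt{\|x+y\|^2+\|T(x+y)\|^2}=\sqrt{a_1^2+a_2^2}+\sqrt{b_1^2+b_2^2}.
\]
Using $\|x+y\|\leq a_1+b_1$, $\|T(x+y)\|\leq a_2+b_2$, and the Cauchy--Schwarz bound
\[
a_1b_1+a_2b_2\leq\sqrt{a_1^2+a_2^2}\,\sqrt{b_1^2+b_2^2},
\]
equality must hold in all three inequalities. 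The Cauchy--Schwarz equality yields $(a_1,a_2)\parallel(b_1,b_2)$, while equality in $\|T(x+y)\|=\|Tx\|+\|Ty\|$ combined with strict convexity of the $\ell^2$-norm forces $Tx=\mu Ty$ for some $\mu\geq 0$. Matching constants and invoking injectivity of $T$ gives $x=\lambda y$ for some $\lambda>0$, which is the desired strict convexity criterion. Hence $(X,|||\cdot|||)$ is strictly convex, and Theorem \ref{5.10} supplies the conclusion.

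The only slightly delicate step is the passage from $w^*$-separability of $X^*$ to the existence of a countable total subset, together with the injectivity of $T$; once these are in hand, the Hilbertianization via $T$ and the Cauchy--Schwarz/$\ell^2$-strict-convexity argument are routine. No new approximation machinery is required beyond Theorem \ref{5.10}.
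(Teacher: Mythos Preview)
Your proposal is correct and follows exactly the paper's approach: reduce to Theorem \ref{5.10} and then invoke the fact that a Banach space with $w^*$-separable dual admits an equivalent strictly convex norm. The only difference is that the paper simply asserts this renorming fact as known, whereas you supply the explicit Clarkson--Day construction and verify strict convexity; your verification is sound (injectivity of $T$ plus strict convexity of $\ell^2$ already force $x=\mu y$, so the Cauchy--Schwarz step is only needed to close the inequality chain, not for the final conclusion).
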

\begin{proof}
By Theorem \ref{5.10}, it suffices to note that every Banach space with $w^*$-separable dual admits an equivalent strictly convex norm.
\end{proof}

\begin{corollary}\label{5.13}
If $X$ is one of the following  Banach spaces: $c_0(\Gamma)$ for an arbitrary set $\Gamma$, $\ell_p\;(1\leq p\leq\infty)$, $L_p(\mu)\; (1\leq p\leq\infty,\mu\;{\rm is\;}\sigma\text{-\;finite})$ and $\ell_1[0,1]$,  then the set consisting of all strictly convex bodies of $X$  is dense in $\mathfrak C(X)$.
\end{corollary}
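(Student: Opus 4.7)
The approach is to invoke Theorem \ref{5.10} and reduce Corollary \ref{5.13} to producing, on each listed space $X$, an equivalent strictly convex norm. The spaces then split naturally into three groups: those that are weakly compactly generated (for which Corollary \ref{5.11} applies directly), the space $\ell_\infty$ (whose dual is $w^*$-separable, handled by Corollary \ref{5.12}), and the residual cases $L_\infty(\mu)$ and $\ell_1[0,1]$, which I would treat by the explicit construction $\|x\|_{\rm new} = \|x\|_X + \|Tx\|_Y$ where $T: X \to Y$ is a bounded injection into a strictly convex Banach space $Y$.

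I would first dispose of the WCG cases via Corollary \ref{5.11}. The space $c_0(\Gamma)$ for arbitrary $\Gamma$ is WCG because $\{e_\gamma : \gamma \in \Gamma\} \cup \{0\}$ is weakly compact (each $y \in \ell_1(\Gamma) = c_0(\Gamma)^*$ has countable support, so $0$ is the only possible weak cluster point of a net of distinct unit vectors) and has dense linear span. The reflexive spaces $\ell_p$ and $L_p(\mu)$ for $1 < p < \infty$ are WCG (and already strictly convex by Clarkson), while $\ell_1$ is separable. For $L_1(\mu)$ with $\mu$ $\sigma$-finite, I would first reduce to a probability measure $\nu \sim \mu$ via the isometric isomorphism $L_1(\mu) \cong L_1(\nu)$ given by $f \mapsto f\,(d\mu/d\nu)$; then the continuous dense inclusion $L_2(\nu) \hookrightarrow L_1(\nu)$ sends the weakly compact unit ball $B_{L_2(\nu)}$ to a weakly compact set in $L_1(\nu)$ with dense linear span, exhibiting $L_1(\nu)$ as WCG.

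For $\ell_\infty$ I would invoke Corollary \ref{5.12}: $\ell_1 \hookrightarrow \ell_\infty^*$ has pre-annihilator $\{0\}$ in $\ell_\infty$, so by the bipolar theorem $\ell_1$ is $w^*$-dense in $\ell_\infty^*$; any countable norm-dense subset of the separable space $\ell_1$ is therefore $w^*$-dense in $\ell_\infty^*$. For $L_\infty(\mu)$ with $\mu$ $\sigma$-finite, after the same reduction to a probability measure $\nu$, the canonical inclusion $L_\infty(\nu) \hookrightarrow L_2(\nu)$ is a bounded injection (of norm at most $1$) into the strictly convex $L_2(\nu)$, so $\|f\|_{\rm new} = \|f\|_\infty + \|f\|_{L_2(\nu)}$ is equivalent to $\|\cdot\|_\infty$ and strictly convex. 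For $\ell_1[0,1] = \ell_1(\Gamma)$ with $|\Gamma| = \mathfrak{c}$, the elementary estimate $\sum_\gamma x_\gamma^2 \le (\sup_\gamma |x_\gamma|)\,\|x\|_1 \le \|x\|_1^2$ shows that the formal identity embeds $\ell_1(\Gamma)$ boundedly into the strictly convex space $\ell_2(\Gamma)$; consequently $\|x\|_1 + \bigl(\sum_\gamma x_\gamma^2\bigr)^{1/2}$ is the desired equivalent strictly convex norm.

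The only technical check common to all the direct constructions is strict convexity of the sum-type norm $\|x\|_X + \|Tx\|_Y$: if equality holds in its triangle inequality, equality must hold separately in each summand, and strict convexity of $\|\cdot\|_Y$ together with injectivity of $T$ then forces $y$ to be a nonnegative scalar multiple of $x$. I expect no serious obstacle, since the list is precisely the classical catalogue of spaces on which one of these three mechanisms is known to produce a strictly convex renorming.
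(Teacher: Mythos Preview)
Your argument is correct and follows the same overall scheme as the paper: reduce to Theorem~\ref{5.10} and then exhibit an equivalent strictly convex norm on each listed space. The case split, however, is handled differently. The paper dispatches $\ell_p$ and $L_p(\mu)$ for $1\le p<\infty$ by declaring them separable, handles $L_\infty(\mu)$ by asserting it is isomorphic to $\ell_\infty$ (hence has $w^*$-separable dual via Corollary~\ref{5.12}), and treats $\ell_1[0,1]$ by invoking $w^*$-separability of its dual. Your route is more self-contained and in fact more robust: you cover $L_p(\mu)$ for $1\le p<\infty$ through the WCG property (which holds for every $\sigma$-finite $\mu$, whereas separability of $L_p(\mu)$ genuinely fails for, say, product Lebesgue measure on $[0,1]^{\aleph_1}$), and you bypass the $w^*$-separability arguments for $L_\infty(\mu)$ and $\ell_1[0,1]$ entirely by writing down explicit bounded injections into $L_2(\nu)$ and $\ell_2(\Gamma)$ and forming the sum norm. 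What you gain is an elementary, hypothesis-free treatment of the $\sigma$-finite $L_p$ scale; what the paper's route offers (when its hypotheses are met) is a uniform reduction to the two structural corollaries~\ref{5.11} and~\ref{5.12} without any ad hoc renorming formulas.
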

\begin{proof}
By Theorem \ref{5.10}, it is true for $X=c_0(\Gamma)$ because $c_0(\Gamma)$ is WCG for an arbitrary set $\Gamma$.
Again by Theorem \ref{5.10}, the conclusion holds for  $X=\ell_p\;(1\leq p<\infty)$, $L_p(\mu)\; (1\leq p<\infty,\mu\;{\rm is\;}\sigma\text{-\;finite})$ because they are separable.
According to Corollary \ref{5.11}, it is true when $X=\ell_\infty, L_\infty(\mu)$ because $\ell^*_\infty=\ell_1\oplus c_0^\bot$ is $w^*$-separable, and   $L_\infty(\mu)$ is isomorphic to $\ell_\infty$.
Since the dual $\ell^*_1[0,1]$ of $\ell_1[0,1]$ is $w^*$-separable, the conclusion for $X=\ell_1[0,1]$ follows from Corollary \ref{5.11}.
\end{proof}
\section{G\^{a}teaux smooth  convex bodies in $\mathfrak C(X)$}

In this section, we will show that if a Banach space $X$ admits an equivalent norm so that its dual $X^*$ is strictly convex with respect to the new dual norm, then G\^{a}teaux smooth convex bodies are dense in  $\mathfrak C(X)$, i.e. Theorem \ref{1.2} stated in Section 1.

 For a Banach space $X$, we again use  $\mathscr C_{\rm conv}(X)$ to denote the set of all extended real-valued lower semicontinuous proper convex functions on $X$,
 and $\mathscr C^*_{\rm conv}(X^*)$  the set of all extended real-valued $w^*$-lower semicontinuou proper convex functions on $X^*$. If $X$ is nontrivial, then both $\mathscr C_{\rm conv}(X)$  and $\mathscr C^*_{\rm conv}(X^*)$ are not cones. For example, given any $u\neq v\in X$, let $\delta_u,$ and $\delta_v$ be the indicators of the singletons $\{u\}$ and $\{v\}$, i.e. $\delta_u(x)=0,$ if $x=u;\;=+\infty,$ otherwise. Then $\delta_u,$ and $\delta_v$ are extended real-valued lower semicontinuous proper convex functions, but $\delta_u+\delta_v=+\infty.$ However, the following subsets of $\mathscr C_{\rm conv}(X)$ form cones.
 $\mathscr C_{\rm cconv}(X)$ denotes  the cone of all  lower   continuous coercive  convex functions on $X$, and  $\mathscr C^{*}_{\rm cconv}(X^*)$,  the cone of all  lower   continuous and $w^*$-lower semicontinuous coercive  convex functions on $X^*$.
Finally, $\mathscr H^{2}_{c}(X)$   ($\mathscr H^{*2}_{c}(X^*)$ , resp.) stands for the cone of all real-valued  continuous (continuous and $w^*$-lower semicontinuous, resp.) positive quadratic homogenous coercive  convex functions  on $X$ ($X^*$, resp.). 

 A convex body $B$ in $X^*$ is said to be {\it $w^*$-convex} if $B$ is $w^*$-closed.
 The following property is easy to follow.
\begin{proposition}\label{6.3}
 Let $C$ be a closed convex set of $X^*$ with $0\in{\rm int}(C)$. Then

 i) $p_C$ is $w^*$-lower semicontiuous if and only if $C$ is $w^*$-closed;

 ii) $C$ is a $w^*$-convex body if and only if $p_C$ is $w^*$-lower semicontinuous coercive.
 \end{proposition}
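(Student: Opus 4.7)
The plan is to base everything on two standard facts. First, when $C$ is (norm-)closed and $0\in\mathrm{int}(C)$, the Minkowski functional satisfies
\[
C=\{x^*\in X^* : p_C(x^*)\leq 1\},\qquad \{x^* : p_C(x^*)\leq r\}=rC\text{ for }r>0,
\]
(and $\bigcap_{s>0}sC$ for $r=0$). Second, a function on a topological space is lower semicontinuous in a given topology if and only if each of its sublevel sets is closed in that topology. Both facts need only a one-line verification using the definition of $p_C$ and the closedness of $C$; the key routine point is that the infimum defining $p_C(x^*)$ is attained whenever it is positive, because $C$ is closed.

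For part i), the direction $(\Leftarrow)$ is then immediate: if $C$ is $w^*$-closed, each $\lambda C$ ($\lambda>0$) is $w^*$-closed, hence so is every sublevel set of $p_C$, so $p_C$ is $w^*$-lsc. The direction $(\Rightarrow)$ is even shorter: by the first identification above, $C$ is itself the unit sublevel set of $p_C$, so $w^*$-lsc of $p_C$ forces $C$ to be $w^*$-closed.

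For part ii), I would argue both directions by reduction to part i) plus a standard coerciveness-versus-boundedness argument. For $(\Rightarrow)$: if $C$ is a $w^*$-convex body, then $C$ is in particular $w^*$-closed, so by i) $p_C$ is $w^*$-lsc; moreover, boundedness of $C$ gives a constant $M$ with $C\subset MB_{X^*}$, whence $p_C(x^*)\geq \|x^*\|/M$, and coerciveness follows. For $(\Leftarrow)$: $w^*$-lower semicontinuity of $p_C$ gives back the $w^*$-closedness of $C$ via i); and if $C$ were unbounded, a sequence $x^*_n\in C$ with $\|x^*_n\|\to\infty$ would force $p_C(x^*_n)\to\infty$ by coerciveness, contradicting $p_C(x^*_n)\leq 1$. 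Combined with the hypothesis $0\in\mathrm{int}(C)$, this makes $C$ a $w^*$-closed convex body, i.e.\ a $w^*$-convex body.

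I do not anticipate a genuine obstacle here; the statement is essentially a translation between topological properties of the gauge and of the set. The only place that might require care is checking the identity $\{p_C\leq r\}=rC$ cleanly (one needs closedness of $C$ to pass from an approximating sequence $x^*/\lambda_n\in C$ with $\lambda_n\downarrow p_C(x^*)>0$ to the limit $x^*/p_C(x^*)\in C$, where norm-closedness of $C$ suffices). Once that identification is in place, both (i) and (ii) follow in a few lines each.
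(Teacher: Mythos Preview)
Your proposal is correct. The paper does not give a proof of this proposition at all; it simply prefaces the statement with ``The following property is easy to follow'' and moves on. Your argument via the identification $\{p_C\le r\}=rC$ for $r>0$ (and $\bigcap_{s>0}sC$ for $r=0$) together with the sublevel-set characterization of lower semicontinuity is exactly the standard route one would expect, and it fills in the details cleanly; there is nothing to compare against beyond that.
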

 Let $X$ be a Banach space. We use $\mathfrak C^*(X^*)$ to denote the cone of all $w^*$-compact convex sets of $X^*$ endowed with the Hausdorff metric $d_H$.
  $\mathfrak C^*_{0}(X^*)$ ($\mathfrak C^*_{00}(X^*)$, resp.) is the cone of all $w^*$-compact convex sets containing the origin ($w^*$-compact convex sets containing the origin in their interiors, resp.).

  Let $\mathscr H^{*2}(X^*)$ be the cone of all continuous and $w^*$-lower semicontinuous positive quadratic homogenous   convex functions endowed with the metric $d$ defined by
  \[d(f,g)=\sup_{x^*\in X^*,\|x^*\|\leq1}|f(x^*)-g(x^*)|,\;\;f,g\in \mathscr H^{*2}(X^*),\]
   and $\mathscr H^{*2}_c(X^*)$ ( $\mathscr H^{*2}_{\rm cs}(X^*)$, resp.) be the subcone consisting of all continuous positive quadratic  homogenous coercive  convex (strictly convex, resp.) functions  of $\mathscr H^{*2}(X^*)$.

   Parallel to Lemma \ref{5.2}, we have
\begin{lemma}\label{6.4}

i) Both $\mathfrak C(X^*)$ and $\mathfrak C^*_0(X^*)$ are complete cones;

ii) $\mathfrak C^*_{00}(X^*)$ is a dense open set of $\mathfrak C^*_0(X^*)$.
 \end{lemma}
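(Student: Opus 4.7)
The plan is to mirror the proof of Lemma \ref{5.2}, the only new feature being that membership in $\mathfrak C^*(X^*)$ requires $w^*$-compactness, not merely norm-closedness. Completeness of $\mathfrak C(X^*)$ in $d_H$ is just Proposition \ref{5.1} applied to the Banach space $X^*$, so the substance of i) reduces to showing that $\mathfrak C^*_0(X^*)$ is closed inside $\mathfrak C(X^*)$. I would take a $d_H$-Cauchy sequence $\{C_n\}\subset\mathfrak C^*_0(X^*)$ with limit $C\in\mathfrak C(X^*)$ and $\eps_n:=d_H(C_n,C)\to 0$, then verify two things: that $0\in C$, which is immediate from norm-closedness of $C$ together with $0\in C_n$; and that $C$ is $w^*$-closed, which together with boundedness yields $w^*$-compactness via Banach-Alaoglu.

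The decisive input for $w^*$-closedness is that $B_{X^*}$ is $w^*$-compact (Alaoglu) and addition is $w^*$-continuous on $X^*$, so each sum $C_n+\eps_n B_{X^*}$ is a continuous image of the $w^*$-compact product $C_n\times\eps_n B_{X^*}$, hence itself $w^*$-compact, in particular $w^*$-closed. From $C\subset C_n+\eps_n B_{X^*}$ one then obtains $\overline{C}^{w^*}\subset C_n+\eps_n B_{X^*}$. I would pick any $x^*\in\overline{C}^{w^*}$, produce $c_n\in C_n$ with $\|x^*-c_n\|\leq\eps_n$, combine with $C_n\subset C+\eps_n B_{X^*}$ to find $y_n\in C$ with $\|x^*-y_n\|\leq 2\eps_n$, and conclude $x^*\in C$ from norm-closedness of $C$ and $\eps_n\to 0$.

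For ii), density is supplied by the approximation $C\mapsto C+\eps B_{X^*}\in\mathfrak C^*_{00}(X^*)$: the sum is $w^*$-compact by the same continuity-of-addition argument, it contains $\eps B_{X^*}$, so $0$ lies in its norm-interior, and $d_H(C,C+\eps B_{X^*})=\eps$. For openness, given $C\in\mathfrak C^*_{00}(X^*)$ and $\delta>0$ with $\delta B_{X^*}\subset C$, any $D\in\mathfrak C^*_0(X^*)$ with $\eta:=d_H(C,D)<\delta$ satisfies $\delta B_{X^*}\subset C\subset D+\eta B_{X^*}$; writing $\delta B_{X^*}=(\delta-\eta)B_{X^*}+\eta B_{X^*}$ and cancelling $\eta B_{X^*}$ via R\aa dstr\"om's cancellation theorem (whose hypotheses hold since all three sets are bounded closed convex) yields $(\delta-\eta)B_{X^*}\subset D$, whence $0\in\mathrm{int}\,D$.

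The only genuine obstacle over the scalar Lemma \ref{5.2} is this preservation of $w^*$-compactness under Hausdorff convergence; once the inflated sets $C_n+\eps_n B_{X^*}$ are recognized as $w^*$-closed via Alaoglu and $w^*$-continuity of addition, the rest of the argument is a direct transcription of the Lemma \ref{5.2} proof to the dual setting.
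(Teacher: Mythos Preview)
Your proposal is correct. The paper does not actually supply a proof of Lemma~\ref{6.4}: it merely introduces the statement with ``Parallel to Lemma~\ref{5.2}, we have'' and leaves all details to the reader. Your argument is exactly the kind of transcription the paper is gesturing at, and you correctly identify and handle the one genuinely new point---preservation of $w^*$-compactness under $d_H$-limits---via the observation that each $C_n+\eps_n B_{X^*}$ is $w^*$-compact (Alaoglu plus joint $w^*$-continuity of addition on $X^*\times X^*$), hence $w^*$-closed, which lets you squeeze $\overline{C}^{w^*}$ back into $C$.

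Two minor remarks. First, strictly speaking $d_H(C_n,C)=\eps_n$ only guarantees $C\subset C_n+rB_{X^*}$ for every $r>\eps_n$; replacing $\eps_n$ by $\eps_n+1/n$ (or any sequence strictly above $\eps_n$ tending to $0$) removes this wrinkle without altering the argument. Second, your use of R\aa dstr\"om cancellation for openness is more explicit than the paper's treatment of the analogous step in Lemma~\ref{5.2}, where the inclusion $\{D:d_H(C,D)<\delta\}\subset\mathfrak C_{00}(X)$ is simply asserted; your justification is a welcome addition, and an alternative route avoiding cancellation would be a direct Hahn--Banach separation argument.
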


Recall the definition of Fenchel's transform $\mathscr F:\mathscr C_{\rm conv}(X)\rightarrow\mathscr C_{\rm conv}(X^*)$ (Definition \ref{2.5.1}) is defined for $f\in \mathscr C_{\rm conv}(X)$ by
\[ \mathscr F(f)(x^*)=\sup_{x\in X}\{\langle x^*,x\rangle-f(x)\},\;\;x^*\in X^*. \]
The following property follows from definitions of subdifferential mapping of  convex functions and of Fenchel's transform.

\begin{lemma}\label{6.5}
Let $f$ be a lower semicontinuous proper convex function defined on a Banach space $X$, $x_0\in X$ and $x_0^*\in X^*$. Then
\begin{equation}\label{6.6}
x_0^*\in\partial f(x_0)\;\Longleftrightarrow\;f(x_0)+\mathscr F(f)(x_0^*)=\langle x^*_0,x_0\rangle.
\end{equation}
Consequently,
\begin{equation}\label{6.7}
x_0^*\in\partial f(x_0)\;\Longleftrightarrow\;x_0\in\partial\mathscr F(f)(x_0^*).
\end{equation}
\end{lemma}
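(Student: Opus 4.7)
The plan is to prove (\ref{6.6}) by a direct unpacking of the two definitions, and then obtain (\ref{6.7}) as a symmetric consequence of (\ref{6.6}) combined with the biconjugate identity implicit in Proposition \ref{2.5.5}.

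For (\ref{6.6}), I would start from the subdifferential inequality. By definition, $x_0^*\in\partial f(x_0)$ says
\[ f(y)-f(x_0)\geq\langle x_0^*,y-x_0\rangle \quad \text{for all } y\in X, \]
which rearranges to $\langle x_0^*,y\rangle-f(y)\leq\langle x_0^*,x_0\rangle-f(x_0)$ for every $y\in X$. Taking the supremum over $y$ on the left gives $\mathscr F(f)(x_0^*)\leq\langle x_0^*,x_0\rangle-f(x_0)$. The reverse inequality is automatic from the definition of $\mathscr F(f)$ by choosing $y=x_0$ in the sup (note $f(x_0)<+\infty$ since $x_0^*\in\partial f(x_0)$ forces $x_0\in\mathrm{dom}\,f$). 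Hence equality holds, which is exactly $f(x_0)+\mathscr F(f)(x_0^*)=\langle x_0^*,x_0\rangle$. Conversely, if the equality holds, then for any $y\in X$,
\[ f(y)\geq\langle x_0^*,y\rangle-\mathscr F(f)(x_0^*)=\langle x_0^*,y\rangle-\langle x_0^*,x_0\rangle+f(x_0), \]
which is precisely $x_0^*\in\partial f(x_0)$. This gives the first equivalence.

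For (\ref{6.7}), the idea is that the right-hand side of (\ref{6.6}) is symmetric in its treatment of $x_0$ and $x_0^*$. Applying the already-established equivalence (\ref{6.6}) to the function $g\equiv\mathscr F(f)$, which by Proposition \ref{2.5.3} lies in $\mathscr C^*_{\mathrm{conv}}(X^*)$, evaluated at the point $x_0^*\in X^*$ against the functional $x_0\in X\subset X^{**}$, yields
\[ x_0\in\partial\mathscr F(f)(x_0^*)\;\Longleftrightarrow\;\mathscr F(f)(x_0^*)+\mathscr F(\mathscr F(f))(x_0)=\langle x_0^*,x_0\rangle. \]
Now I invoke Proposition \ref{2.5.5}(ii): since $\mathscr F|_{*}\circ\mathscr F$ is the identity on $\mathscr C_{\mathrm{conv}}(X)$, we have $\mathscr F(\mathscr F(f))|_X=f$, so in particular $\mathscr F(\mathscr F(f))(x_0)=f(x_0)$. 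Substituting, the displayed equivalence becomes $f(x_0)+\mathscr F(f)(x_0^*)=\langle x_0^*,x_0\rangle$, which by (\ref{6.6}) is the same as $x_0^*\in\partial f(x_0)$.

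The main technical point (not an obstacle but something that needs to be stated carefully) is the biconjugate identity $\mathscr F(\mathscr F(f))|_X=f$; this is the Fenchel–Moreau theorem, and the author has already set it up in Proposition \ref{2.5.5}(ii), so I would simply cite that. Alternatively, the forward direction of (\ref{6.7}) can be obtained without Fenchel–Moreau: if $x_0^*\in\partial f(x_0)$, then the sup defining $\mathscr F(f)(x_0^*)$ is attained at $x_0$, and for any $y^*\in X^*$ one has $\mathscr F(f)(y^*)\geq\langle y^*,x_0\rangle-f(x_0)=\mathscr F(f)(x_0^*)+\langle y^*-x_0^*,x_0\rangle$, i.e.\ $x_0\in\partial\mathscr F(f)(x_0^*)$; the reverse direction is the one that genuinely needs the biconjugate identity.
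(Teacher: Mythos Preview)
Your proof of (\ref{6.6}) is correct and essentially identical to the paper's: both unpack the subdifferential inequality, rearrange, and take the supremum to identify $\mathscr F(f)(x_0^*)$ with $\langle x_0^*,x_0\rangle-f(x_0)$. The paper's proof actually stops there and does not spell out (\ref{6.7}) at all (it is left as a ``Consequently''), so your explicit argument for (\ref{6.7}) via the biconjugate identity from Proposition~\ref{2.5.5}(ii) goes beyond what the paper writes, but is the standard and correct way to justify that step.
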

\begin{proof}
By the definition  of the subdifferential mapping,
\[x_0^*\in\partial f(x_0)\Longleftrightarrow\;f(x)-f(x_0)\geq\langle x^*_0,x-x_0\rangle,\;\forall\;x\in X\]
\[\Longleftrightarrow \langle x^*_0,x_0\rangle- f(x_0)\geq \langle x^*_0,x\rangle-f(x),\;\forall\;x\in X\]
\[\Longleftrightarrow \mathscr F(f)(x_0^*)=\langle x^*_0,x_0\rangle- f(x_0) \]
\[\Longleftrightarrow \mathscr F(f)(x_0^*)+ f(x_0)=\langle x^*_0,x_0\rangle. \]
\end{proof}
Recall that for a subset $A\subset X$, its polar $A^\circ$ is defined by \[A^\circ=\{x^*\in X^*: \langle x^*,x\rangle\leq1,\;\forall x\in A\};\] and the support function $\sigma_A$ of $A$ is defined for $x^*\in X^*$ by \[\sigma_A(x^*)=\sup\{\langle x^*.x\rangle: x\in A\}.\]
The following property is easy to obtain.
\begin{proposition}\label{6.8}
Let $B\subset\mathfrak C_{00}(X)$.  Then its polar $B^\circ\in\mathfrak C_{00}(X^*)$, and  the Minkowski functional $p_{B^\circ}$ satisfies $p_{B^\circ}=\sigma_B.$
\end{proposition}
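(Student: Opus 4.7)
The plan is to verify both assertions by direct computation from the definitions of polar set, Minkowski functional, and support function. First, since $B\in\mathfrak C_{00}(X)$ is a convex body with $0\in\text{int}(B)$, Proposition \ref{2.5} (iv) (or just the fact that $B$ is bounded with interior point $0$) yields constants $\delta,\alpha>0$ with $\delta B_X\subset B\subset \alpha B_X$. Taking polars reverses inclusions, so I would derive
\[(1/\alpha)B_{X^*}\subset B^\circ\subset(1/\delta)B_{X^*},\]
which gives simultaneously that $B^\circ$ is norm-bounded and that $0$ lies in the norm-interior of $B^\circ$. Writing $B^\circ=\bigcap_{x\in B}\{x^*\in X^*:\langle x^*,x\rangle\leq 1\}$ exhibits it as an intersection of $w^*$-closed half-spaces, hence it is convex and (norm- and $w^*$-) closed. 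This settles $B^\circ\in\mathfrak C_{00}(X^*)$.

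For the identity $p_{B^\circ}=\sigma_B$, I would fix $x^*\in X^*$ and chase through the definition of the Minkowski functional. For $\lambda>0$, note that $x^*\in\lambda B^\circ$ iff $x^*/\lambda\in B^\circ$, iff $\langle x^*/\lambda,x\rangle\leq 1$ for every $x\in B$, iff $\sup_{x\in B}\langle x^*,x\rangle\leq\lambda$, iff $\sigma_B(x^*)\leq\lambda$. Therefore
\[p_{B^\circ}(x^*)=\inf\{\lambda>0:\sigma_B(x^*)\leq\lambda\}.\]
Since $0\in B$ forces $\sigma_B(x^*)\geq\langle x^*,0\rangle=0$, the infimum on the right is exactly $\sigma_B(x^*)$, completing the identification.

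There is essentially no obstacle to this proof — it is a standard duality between a convex body and its polar. The only point that warrants a word of care is observing that $\sigma_B\geq 0$ (which uses $0\in B$), so that the infimum displayed above evaluates to $\sigma_B(x^*)$ itself rather than to $\max\{\sigma_B(x^*),0\}$; otherwise one would only get $p_{B^\circ}=\sigma_B\vee 0$. With $0$ genuinely in the interior of $B$, this detail is automatic, and both claims of the proposition drop out in two short paragraphs.
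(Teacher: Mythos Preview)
Your proof is correct and complete. The paper itself omits the proof of this proposition (declaring only that ``the following property is easy to obtain''), so there is no approach to compare with; your direct verification via the inclusions $\delta B_X\subset B\subset\alpha B_X$ and the equivalence $x^*\in\lambda B^\circ\Leftrightarrow\sigma_B(x^*)\leq\lambda$ is exactly the standard argument one would expect the authors to have in mind.
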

\begin{lemma}\label{6.9}
Suppose that $X$ is a Banach space. Then

i) for each $f\in \mathscr H^{2}_{c}(X)$ there is a convex body $B\in\mathfrak C_{00}(X)$ such that
$f=\frac{1}2p_B^2$, where $p_B$ is the Minkowski functional generated by $B$;

ii) for each $f\in \mathscr H^{2}_{c}(X)$, we have $\mathscr F(f)=\frac{1}2p^2_{B^\circ}=\frac{1}2\sigma^2_{B}$;

iii) $\mathscr F: \mathscr H^{2}_{c}(X)\rightarrow \mathscr H^{2*}_{c}(X)$ is a fully order-reversing isomorphism.
\end{lemma}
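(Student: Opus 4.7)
The plan is to handle the three parts in order, with (i) reducing to earlier lemmas, (ii) being an explicit Fenchel-transform calculation on a half-gauge-squared, and (iii) packaging (ii) together with polar duality. Part (i) is immediate from Lemma~\ref{5.3}: items (v) and (vi) of that lemma, applied to $f\in\mathscr H^2_c(X)$, produce a closed convex set $C\subset X$ with $0\in\mathrm{int}\,C$ and $f=\tfrac12 p_C^2$; coerciveness of $f$ promotes $C$ to a convex body, so $B:=C\in\mathfrak C_{00}(X)$ works.

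For part (ii), the key input is a Young-type inequality for the gauge: whenever $p_B(x)>0$, the vector $x/p_B(x)$ lies in $B$, so $\langle x^*,x/p_B(x)\rangle\le\sigma_B(x^*)$, giving $\langle x^*,x\rangle\le p_B(x)\sigma_B(x^*)$; when $p_B(x)=0$ we have $x=0$ (since $B$ is bounded), and the inequality is trivial. Combining with the elementary $ab\le\tfrac12 a^2+\tfrac12 b^2$ yields $\langle x^*,x\rangle-\tfrac12 p_B(x)^2\le\tfrac12\sigma_B(x^*)^2$, hence $\mathscr F(f)(x^*)\le\tfrac12\sigma_B(x^*)^2$. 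For the matching lower bound, for each $\eps>0$ I would choose $y_\eps\in B$ with $\langle x^*,y_\eps\rangle>\sigma_B(x^*)-\eps$ and plug $x=\sigma_B(x^*)\,y_\eps$ (which satisfies $p_B(x)\le\sigma_B(x^*)$) into the Fenchel supremum; letting $\eps\to0$ drives the value to $\tfrac12\sigma_B(x^*)^2$. Nonnegativity $\sigma_B(x^*)\ge0$ is used here and comes from $0\in B$. Thus $\mathscr F(f)=\tfrac12\sigma_B^2$, and Proposition~\ref{6.8} identifies $\sigma_B$ with $p_{B^\circ}$.

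For part (iii), three items remain. \emph{Range:} (ii) gives $\mathscr F(f)=\tfrac12 p_{B^\circ}^2$, and since $B\in\mathfrak C_{00}(X)$, standard polar estimates (with Banach-Alaoglu) make $B^\circ$ a $w^*$-compact convex set in $X^*$ with $0\in\mathrm{int}(B^\circ)$, so by Proposition~\ref{6.3} $p_{B^\circ}$ is continuous, coercive, and $w^*$-lower semicontinuous; thus $\tfrac12 p_{B^\circ}^2\in\mathscr H^{*2}_c(X^*)$. \emph{Order-reversal and injectivity} are inherited from Proposition~\ref{2.5.5}(ii) because $\mathscr H^2_c(X)\subset\mathscr C_{\mathrm{conv}}(X)$ and $\mathscr H^{*2}_c(X^*)\subset\mathscr C^*_{\mathrm{conv}}(X^*)$. \emph{Surjectivity:} given $h\in\mathscr H^{*2}_c(X^*)$, the $X^*$-analogue of (i)---Lemma~\ref{5.3}(v)(vi) rerun in $X^*$ with Proposition~\ref{6.3} supplying $w^*$-lower semicontinuity---writes $h=\tfrac12 p_D^2$ for a $w^*$-convex body $D\in\mathfrak C^*_{00}(X^*)$. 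Let $D_\circ:=\{x\in X:\langle x^*,x\rangle\le1\text{ for all }x^*\in D\}$; boundedness of $D$ plus $0\in\mathrm{int}(D)$ make $D_\circ$ a convex body in $\mathfrak C_{00}(X)$, and applying (ii) to $B:=D_\circ$, together with the bipolar identity $(D_\circ)^\circ=D$ for $w^*$-closed convex bodies, gives $\mathscr F(\tfrac12 p_{D_\circ}^2)=\tfrac12 p_{(D_\circ)^\circ}^2=\tfrac12 p_D^2=h$. The one nonroutine step is the Young-type calculation in (ii); the subtlety is that $p_B$ need not be symmetric, so the classical $\tfrac12\|\cdot\|^2$ computation (Proposition~\ref{2.5.4}) must be redone carefully, exploiting $\sigma_B\ge0$ which stems from $0\in B$.
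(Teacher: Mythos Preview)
Your proof is correct, and part~(i) matches the paper. The genuine difference is in part~(ii): the paper argues via subdifferentials, writing $x_0^*=p_B(x_0)z_0^*$ with $z_0^*\in\partial p_B(x_0)$, computing $\mathscr F(f)(x_0^*)=\tfrac12 p_B^2(x_0)$ directly on the range of $\partial f$, and then invoking the Bishop--Phelps theorem to obtain density of $\partial f(X)$ in $X^*$ and extend by continuity. Your Young-type inequality $\langle x^*,x\rangle\le p_B(x)\,\sigma_B(x^*)$ combined with the explicit near-maximizer $x=\sigma_B(x^*)\,y_\eps$ is more elementary and self-contained, bypassing Bishop--Phelps entirely and handling every $x^*\in X^*$ at once. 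The trade-off is that the paper's route produces, as a byproduct, the pointwise identity $p_B(x_0)=\sigma_B(x_0^*)$ for $x_0^*\in\partial f(x_0)$ (equation~(\ref{6.12})), which is reused in the proof of Lemma~\ref{6.14}; your argument does not record this, though it could be recovered separately. For part~(iii), the paper says only that it is ``easy to check'' from~(ii); your explicit verification of range, injectivity (via Proposition~\ref{2.5.5}), and surjectivity (via the bipolar identity for $w^*$-closed bodies) fills this in cleanly.
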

\begin{proof}
i) is just Lemma \ref{5.3} v).

ii) By i), for each $f\in \mathscr H^{2}_{c}(X)$ there is a convex body $B\in\mathfrak C_{00}(X)$ such that
$f=\frac{1}2p_B^2$. Fix any $x_0\in X$ and $x^*_0\in\partial f(x_0)=p_B(x_0)\partial p_B(x_0)$. Then $x^*_0=p_B(x_0)z^*_0$ for some $z^*_0\in\partial p_B(x_0)$.
Definition of  Fenchel's transform entails
\[\mathscr F(f)(x^*_0)=\sup\{\langle x^*_0,x\rangle-f(x): x\in X\}=\sup\{p_B(x_0)\langle z^*_0,x\rangle-\frac{1}2p_B^2(x): x\in X\}.\]
Since $g(x)\equiv \frac{1}2p_B^2(x)-p_B(x_0)\langle z^*_0,x\rangle$ is a continuous convex function with $0\in\partial g(x_0)$,
\begin{equation}\label{6.10}
{\rm min}_{x\in X}g(x)=g(x_0)=\frac{1}2p_B^2(x_0)-p_B(x_0)\langle z^*_0,x_0\rangle.
\end{equation}
Note that $z^*_0\in\partial p_B(x_0)$ implies $\langle z^*_0,x_0\rangle=p_B(x_0)$. This and (\ref{6.10}) entail
\begin{equation}\label{6.11}\mathscr F(f)(x^*_0)=\sup\{-g(x):x\in X\}=-\min_{x\in X}g(x)=-g(x_0)=\frac{1}2p_B^2(x_0).
\end{equation}

On the other hand, note that there is $z_0^*\in\partial p_B(x_0)$ such that $x_0^*=p_B(x_0)z_0^*$. It follows from (\ref{6.11})
\[\frac{1}2p_B^2(x_0)=\mathscr F(f)(x^*_0)=\sup\{\langle x^*_0,x\rangle-f(x): x\in X\}\]
\[=\sup\{p_B(x_0)\langle  z^*_0,x\rangle-\frac{1}2p^2_B(x): x\in X\}.\]
This and $\langle  z^*_0,x\rangle=p_B(x_0)$ imply
 \[\frac{1}2p_B^2(x_0)=\sup\{p_B(x_0)\langle  z^*_0,x\rangle-\frac{1}2p^2_B(x): x\in X, p_B(x)\leq p_B(x_0)\}\]
 \[\frac{1}2p_B^2(x_0)=\sup\{p_B(x_0)\langle  z^*_0,x\rangle-\frac{1}2p^2_B(x): x\in X, p_B(x)= p_B(x_0)\}\]
 \[=\sup\{p_B(x_0)\langle  x^*_0,x\rangle: x\in X, p_B(x)\leq 1\}-\frac{1}2p^2_B(x_0)\]
 \[=p_B(x_0)\sigma_B(x_0^*)-\frac{1}2p^2_B(x_0),\]
 where $\sigma_B(x_0^*)=\sup\{\langle  x^*_0,x\rangle: x\in B\}.$ This entails that \begin{equation}\label{6.12}p_B(x_0)=\sigma_B(x_0^*),\;\;\forall\;x_0\in X,\;x_0^*\in\partial f(x_0).\end{equation}
This combined with (\ref{6.11}) derive
\begin{equation}\label{6.13}\mathscr F(f)(x^*_0)=\frac{1}2\sigma^2_B(x_0^*),\;\forall x_0^*\in\partial f(X).\end{equation}
Since $f$ is continuous coercive convex, by an argument of the Bishop-Phelps theorem (see, for instance, \cite{Ph}), we see the range $\partial f(X)$ of the  subdifferential mapping $\partial f$ is dense in $X^*$.
This, local Lipschitz continuity of $\mathscr F(f)$ on $X^*$ and (\ref{6.13}) together imply
\[\mathscr F(f)(x^*)=\frac{1}2\sigma^2_B(x^*),\;\forall x^*\in X^*.\]

 To finish the proof of of ii), it suffices to note $p_{B^*}=\sigma_B$ on $X^*$.

 iii). By ii), it is easy to check that $\mathscr F: \mathscr H^{2}_{c}(X)\rightarrow \mathscr H^{2*}_{c}(X)$ is a fully order-reversing isomorphism.
\end{proof}
\begin{remark}
Indeed, we can show that $\mathscr F: \mathscr H^{2}_{c}(X)\rightarrow \mathscr H^{2*}_{c}(X)$ is also a locally Lipschitz isomorphism.
\end{remark}

\begin{lemma}\label{6.14}
 Let $X$ be a Banach space.


 i) If $g\in\mathscr H^{*2}_{c}(X^*)$ is strictly convex, then $f\equiv\mathscr F^{-1}(g)\in \mathscr H^{2}_{c}(X)$ is everywhere G\^{a}teaux differentiable.

 ii) If $g\in\mathscr H^{*2}_{c}(X^*)$ is everywhere G\^{a}teaux differentiable, then $f\equiv\mathscr F^{-1}(g)\in \mathscr H^{2}_{c}(X)$ is strictly convex.

\end{lemma}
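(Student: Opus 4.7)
The plan is to use the Fenchel--Young duality between subdifferentials (Lemma \ref{6.5}), which says $x^*_0\in\partial f(x_0)\iff x_0\in\partial\mathscr F(f)(x^*_0)$, to translate failures of Gâteaux differentiability (multi-valued subdifferentials) on one side into failures of strict convexity (affine behavior on segments) on the other. Throughout I will rely on $\mathscr F^{-1}(g)=f$ and $\mathscr F(f)=g$, so subgradients of $f$ at $x$ correspond bijectively to the points $x$ lying in subgradients of $g$.

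\emph{Proof of} i). I will argue by contraposition. Suppose $f=\mathscr F^{-1}(g)\in\mathscr H^2_c(X)$ fails to be Gâteaux differentiable at some $x_0\in X$. By Proposition \ref{2.4.2} iii), $\partial f(x_0)$ contains two distinct points $x_1^*\neq x_2^*$ in $X^*$. By Lemma \ref{6.5}, $x_0\in\partial g(x_1^*)\cap\partial g(x_2^*)$. Writing out the subgradient inequality at $x_1^*$ with test point $x_2^*$ and vice versa and adding the two inequalities yields $0\ge 0$, forcing equality, i.e.\ $g(x_2^*)-g(x_1^*)=\langle x_0,x_2^*-x_1^*\rangle$. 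Then for any $\lambda\in(0,1)$ and $x_\lambda^*=\lambda x_1^*+(1-\lambda)x_2^*$, the inequality $g(x_\lambda^*)-g(x_1^*)\ge\langle x_0,x_\lambda^*-x_1^*\rangle=(1-\lambda)(g(x_2^*)-g(x_1^*))$ gives $g(x_\lambda^*)\ge\lambda g(x_1^*)+(1-\lambda)g(x_2^*)$, while convexity of $g$ gives the reverse inequality. Hence $g$ is affine on the segment $[x_1^*,x_2^*]$, contradicting strict convexity of $g$.

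\emph{Proof of} ii). Again I contrapose: assume $f$ is not strictly convex. By the argument in the proof of Lemma \ref{3.4} (using continuity, positive quadratic homogeneity, and coerciveness of $f$), the failure of strict convexity at one midpoint propagates to the whole chord, so there exist $x_1\neq x_2\in X$ such that $f$ is affine on $[x_1,x_2]$, i.e.\ $f(\lambda x_1+(1-\lambda)x_2)=\lambda f(x_1)+(1-\lambda)f(x_2)$ for all $\lambda\in[0,1]$. Pick any $x_0$ in the relative interior of this segment; by Proposition \ref{2.4.2} i), $\partial f(x_0)\neq\emptyset$, so fix $x^*\in\partial f(x_0)$. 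The key short calculation (which I expect to be the main technical step) is to show that the same functional $x^*$ in fact subdifferentiates $f$ at both endpoints. Taking the subgradient inequality $f(y)-f(x_0)\ge\langle x^*,y-x_0\rangle$ at $y=x_1$ and $y=x_2$, and using affineness of $f$ on $[x_1,x_2]$ to evaluate $f(x_0)$, the two inequalities force $f(x_1)-f(x_2)=\langle x^*,x_1-x_2\rangle$ and then, substituting back, $f(y)-f(x_i)\ge\langle x^*,y-x_i\rangle$ for all $y\in X$ and $i=1,2$. Hence $x^*\in\partial f(x_1)\cap\partial f(x_2)$.

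By Lemma \ref{6.5}, this yields $x_1,x_2\in\partial g(x^*)$. Since $x_1\neq x_2$, the subdifferential $\partial g(x^*)$ is not a singleton, so by Proposition \ref{2.4.2} iii), $g$ fails to be Gâteaux differentiable at $x^*$, contradicting the hypothesis on $g$. Thus $f$ must be strictly convex, completing the proof. The main obstacle, as noted above, is the affineness-propagation step and the verification that a single subgradient at an interior point of the flat segment automatically subdifferentiates $f$ at both endpoints; both are clean subgradient manipulations but must be written out carefully to chain correctly with Lemma \ref{6.5}.
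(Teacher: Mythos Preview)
Your proof is correct, and it takes a genuinely different route from the paper's. The paper first invokes Lemma~\ref{6.9} to write $f=\tfrac12 p_B^2$ and $g=\tfrac12\sigma_B^2$ explicitly. For (i) it then uses the identities (\ref{6.12})--(\ref{6.13}) to see that $g$ is \emph{constant} on the convex set $\partial f(x_0)$, whence strict convexity of $g$ forces $\partial f(x_0)$ to be a singleton. For (ii) the paper passes through the geometric picture: by Lemma~\ref{3.4} some level set $B$ fails to be strictly convex, a maximal flat face $K\subset\partial B$ is extracted via Zorn's lemma, and Hahn--Banach separation produces $x_0^*$ with $K\subset\partial g(x_0^*)$.

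By contrast, you work entirely with the subdifferential duality of Lemma~\ref{6.5} and elementary subgradient calculus, never invoking the Minkowski/support-function representation, Zorn's lemma, or separation. This is cleaner and more portable: your argument applies verbatim to any continuous convex $f$ with $\mathscr F(f)=g$ continuous, not just to the quadratic-homogeneous class $\mathscr H^2_c$. The paper's approach, on the other hand, keeps the geometry of convex bodies in view throughout, which meshes well with the surrounding narrative about $\mathfrak C_{00}(X)$. One minor remark: in (ii) the propagation from midpoint equality to affineness on the whole chord needs only continuity and convexity of $f$ (the $g(t)$ vs.\ $h(t)$ argument in the proof of Lemma~\ref{3.4}); the positive quadratic homogeneity and coerciveness you mention are not actually used at that step.
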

\begin{proof}
 Let $g\in\mathscr H^{*2}_{c}(X^*)$. By Lemma \ref{6.9} ii), there is a convex body $B\in\mathfrak C_{00}(X)$ with $B^\circ\in\mathfrak C^*_{00}(X^*)$ such that
$g=\frac{1}2\sigma_B^2=\frac{1}2p_{B^\circ}^2$ and  $f=\mathscr F^{-1}(g)=\frac{1}2p_{B}^2$.

i) It follows from  (\ref{6.12}) and (\ref{6.13}) that for each $x_0\in X$, we have
\[p_B(x_0)=\sigma_B(x_0^*),\;x_0^*\in\partial f(x_0),\]
and \[g(x^*_0)=\frac{1}2\sigma^2_B(x_0^*)=\frac{1}2p^2_B(x_0),\; x_0^*\in\partial f(x_0).\]
Therefore, $g$ is  constant on $\partial f(x_0)$. Convexity of the set $\partial f(x_0)$ and strict convexity of $g$ entail that $\partial f(x_0)$ is a singleton, which is equivalent to that $f$  is  G\^{a}teaux differentiable at $x_0$.

ii) Suppose, to the contrary, that $f$ is not strictly convex. Then by Lemma \ref{3.4}, there is $r>0$ such that $B_r\equiv\{x\in X: f(x)\leq r\}$ is not strictly convex. Since $f$ is positively quadric  homogenous, we can assume that $r=\frac{1}2$, that is, $B_r=B$. Therefore, there exist $u\neq v\in{\rm bdr}(B)$ such that the segment $[u,v]\equiv\{\lambda u+(1-\lambda)v: 0\leq\lambda\leq1\}\subset B.$ By Zorn's lemma, there is a maximal convex set $K$ of ${\rm bdr}(B)$ such that $[u,v]\subset K$. It is trivial to get that $K$ is closed with $K\cap{\rm int}B=\emptyset.$ By the Hahn-Banach separation theorem of convex sets, there exists $x^*_0\in X^*$ such that
\[\langle x_0^*,x\rangle=1,\;\forall\;x\in K\;\;{\rm and\;}\;\langle x_0^*,x\rangle<1,\;\forall\;x\in B\setminus K.\]
This implies that $K\subset\partial\sigma_B(x_0^*)$. It follows from $\sigma_B(x_0^*)=1$ that \[K\subset\sigma_B(x^*_0)\partial\sigma_B(x_0^*)=\partial(\frac{1}2\sigma^2_B(x_0^*))=\partial g(x_0^*).\] This is a contradiction to that $g$ is G\^{a}teaux differentiable at $x^*_0$.
\end{proof}
\begin{lemma}\label{6.15}
Let $X$ be a Banach space admitting an equivalent norm  so that $X^*$ is strictly convex with respect to the new norm. Then
the cone $\mathfrak C_{\rm 00,gsm}(X)$ consisting of all G\^{a}teaux smooth convex bodies $B$ with $0\in{\rm int}(B)$ contains a dense open subset of
$\mathfrak C_{0}(X)$.
\end{lemma}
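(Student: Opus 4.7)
The plan is to mirror the argument of Lemma~\ref{5.5}, but to transfer the approximation to the dual side via Fenchel's transform. By Lemma~\ref{5.2}, $\mathfrak C_{00}(X)$ is a dense open subcone of $\mathfrak C_0(X)$, so it suffices to build the required subset inside $\mathfrak C_{00}(X)$. I will work along the chain of locally Lipschitz isomorphisms
\[
\mathfrak C_{00}(X)\;\stackrel{T}{\longleftrightarrow}\;\mathscr H^2_c(X)\;\stackrel{\mathscr F}{\longleftrightarrow}\;\mathscr H^{*2}_c(X^*)
\]
supplied by Theorem~\ref{5.4} and Lemma~\ref{6.9}(iii) (the second map being locally Lipschitz by the Remark following Lemma~\ref{6.9}). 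Combining Lemma~\ref{4.1} with Lemma~\ref{6.14}(i), a body $B\in\mathfrak C_{00}(X)$ is G\^{a}teaux smooth as soon as the dual $\mathscr F(T(B))\in\mathscr H^{*2}_c(X^*)$ is strictly convex. Hence it is enough to produce a dense subset of $\mathscr H^{*2}_c(X^*)$ made of strictly convex functions and pull it back.

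Without loss of generality, $X^*$ is strictly convex in the given dual norm, so $\phi:=\tfrac{1}{2}\|\cdot\|^{*2}\in\mathscr H^{*2}_c(X^*)$ is strictly convex: its sublevel sets are dilates of the strictly convex body $B_{X^*}$, and Lemma~\ref{3.4} applies. For each $n\in\mathbb N$ set $\phi_n:=\tfrac{1}{n}\phi$ and
\[
\mathscr G_n\;:=\;\phi_n+\mathscr H^{*2}(X^*)\;\subseteq\;\mathscr H^{*2}_c(X^*)
\]
(the inclusion holds because $\phi_n$ is coercive and the summand is non-negative). By Lemma~\ref{3.7} every element of $\mathscr G_n$ is strictly convex; exactly as in the proof of Lemma~\ref{5.5}, each $\mathscr G_n$ is a closed subcone of $\mathscr H^{*2}_c(X^*)$ (uniform limits of convex functions are convex), and $\bigcup_n\mathscr G_n$ is dense since $d(g,g+\phi_n)\le 1/(2n)$. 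Pulling back through the continuous maps $\mathscr F^{-1}$ and $T^{-1}$ yields
\[
\mathfrak U\;:=\;\bigcup_{n=1}^{\infty}T^{-1}\big(\mathscr F^{-1}(\mathscr G_n)\big),
\]
a dense $F_\sigma$ subset of $\mathfrak C_{00}(X)$, and consequently (by Lemma~\ref{5.2}) of $\mathfrak C_0(X)$, entirely contained in $\mathfrak C_{\rm 00,gsm}(X)$.

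The main obstacle is the literal \emph{openness} asserted in the statement. Strict convexity is a closed, not open, condition in the metric $d$: a small perturbation of a strictly convex $g$ may fail to be strictly convex along some line. For instance, in $\mathbb R^2$, perturbing $\phi_n(x_1,x_2)=\frac{1}{2n}(x_1^2+x_2^2)$ to $g_\varepsilon(x_1,x_2)=\frac{1}{2n}((1+\varepsilon)x_1^2+(1-\varepsilon)x_2^2)$ is convex and $d$-close to $\phi_n$, yet $g_\varepsilon-\phi_n=\frac{\varepsilon}{2n}(x_1^2-x_2^2)$ is indefinite, so $g_\varepsilon\notin\mathscr G_n$; each closed cone $\mathscr G_n$ thus has empty interior, and the $F_\sigma$ construction does not automatically yield an open set. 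To upgrade to dense-openness I would try to exploit the inf-convolution identity $\mathscr F^{-1}(\phi_n+h)=\mathscr F^{-1}(\phi_n)\,\Box\,\mathscr F^{-1}(h)$ of Proposition~\ref{2.6.2}, which rewrites the fragile additive-dual description as an inf-convolution on the primal side (geometrically, a rolling construction against a fixed smooth seed body) — an operation that is significantly more stable under Hausdorff perturbations; failing that, I would read the conclusion as a dense $F_\sigma$, the direct parallel of Lemma~\ref{5.5}, which is all that the subsequent applications in Theorems~\ref{1.2}--\ref{1.3} actually require.
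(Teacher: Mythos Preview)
Your argument is essentially the paper's own: work on the dual side, form $\phi_n+\mathscr H^{*2}(X^*)$ with $\phi_n=\tfrac1n\cdot\tfrac12\|\cdot\|^{*2}$, note that every element is strictly convex, pull back through $\mathscr F^{-1}$ (Lemma~\ref{6.9}(iii)) and then through $T^{-1}$ (Theorem~\ref{5.4}), and invoke Lemma~\ref{6.14}(i) together with Lemma~\ref{4.1} to conclude G\^ateaux smoothness of the resulting bodies. The paper does exactly this, obtaining a dense $F_\sigma$ and hence density of $\mathfrak C_{\rm 00,gsm}(X)$ in $\mathfrak C_0(X)$.

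Your worry about the word ``open'' is well placed and is not a defect of your argument: the paper's own proof of Lemma~\ref{6.15} establishes only density, not openness (its $\mathfrak D=\bigcup_n(f_n+\mathscr H^{*2}(X^*))$ is used as a dense $F_\sigma$, and the conclusion drawn is that $\mathfrak C_{\rm 00,gsm}(X)$ is dense in $\mathfrak C_0(X)$). The downstream Theorem~\ref{6.17} only needs density, so your reading---dense $F_\sigma$, parallel to Lemma~\ref{5.5}---matches what the paper actually proves and uses.
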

\begin{proof}
Without loss of generality, we can assume that $X^*$ is itself strictly convex. Let $f_n=\frac{1}n(\|\cdot\|^*)^2,\;n=1,2,\cdots$, where $\|\cdot\|^*$ is the dual norm of $X^*$.
Then $f_n\in\mathscr H^{*2}_{cs}(X^*)$ for all $n\in\N$. Therefore, $\mathscr H^{*2}_n(X^*)\equiv f_n+\mathscr H^{*2}(X^*)\subset\mathscr H^{*2}_{cs}(X^*)$. Clearly, $\mathfrak D\equiv\bigcup_{n\in\N} \Big(f_n+\mathscr H^2(X^*)\Big)$ is a dense $F_\sigma$-set of $\mathscr H^2(X^*)$. By Lemma \ref{6.9} iii), $\mathscr F^{-1}: \mathscr H^{*2}_{c}(X^*) \rightarrow \mathscr H^{2}_{c}(X)$ is a continuous fully order-reversing isomorphism. Consequently,  $\mathfrak S\equiv\mathscr F^{-1}(\mathfrak D)$ is a dense  subset of $\mathscr H^{2}_{c}(X)$. Since each $g\in\mathfrak D$ is strictly convex, by Lemma \ref{6.14} ii), $f\equiv\mathscr F^{-1}(g)$ is everywhere G\^{a}teaux differentiable. Let $p$ be the Minkowski functional so that $f=\frac{1}2p^2$. Then the convex body
\begin{equation}\label{6.16}B=\{x\in X: p(x)\leq1\}=\{x\in X: f(x)\leq\frac{1}2\}\end{equation} is G\^{a}teaux smooth. Since the correspondence (from $f\in\mathscr H^{2}_{c}(X)$ to $B\in\mathfrak C_{00}(X)$) defined by (\ref{6.16})) is a continuous isomorphism from $\mathscr H^{2}_{c}(X)$ to $\mathfrak C_{00}(X)$, G\^{a}teaux smooth convex bodies are dense in $\mathfrak C_{00}(X)$. Consequently,  $\mathfrak C_{\rm 00,gsm}(X)$ (the cone of G\^{a}teaux smooth convex bodies) is dense in $\mathfrak C_{0}(X)$.

\end{proof}

Now, we are ready to prove Theorem \ref{1.2}. First, we restate it as follows.
\begin{theorem}\label{6.17}
Let $X$ be a Banach space admitting an equivalent norm  so that $X^*$ is strictly convex with respect to the new norm. Then
the cone $\mathfrak C_{\rm sm}(X)$ consisting of all G\^{a}teaux smooth convex bodies  contains a dense  subset of
$\mathfrak C(X)$.
\end{theorem}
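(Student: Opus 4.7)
The plan is to deduce Theorem~\ref{6.17} from Lemma~\ref{6.15} by exploiting translation invariance, exactly parallel to the reduction used at the end of the proof of Theorem~\ref{5.10}. Lemma~\ref{6.15} already provides density of $\mathfrak C_{00,\rm gsm}(X)$ in $\mathfrak C_0(X)$, which is the technically substantive step (built on Fenchel duality $\mathscr F:\mathscr H^2_c(X)\to \mathscr H^{*2}_c(X^*)$, the fact that strict convexity of $g\in\mathscr H^{*2}_c(X^*)$ forces G\^ateaux differentiability of $\mathscr F^{-1}(g)$ via Lemma~\ref{6.14}(i), and the strictly convex perturbations $f_n=\tfrac{1}{n}(\|\cdot\|^*)^2$). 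So the only remaining issue is promoting ``$0\in\text{int}(B)$'' to ``$B$ arbitrary''.

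The key observation is that $\mathfrak C(X)=X+\mathfrak C_{00}(X)$: for any convex body $B\in\mathfrak C(X)$ and any $z\in\mathrm{int}(B)$ (which exists since convex bodies have nonempty interior), the translate $B-z$ lies in $\mathfrak C_{00}(X)\subset\mathfrak C_0(X)$. Moreover, translation is an isometry of $(\mathfrak C(X),d_H)$, since $A\subset B+rB_X$ if and only if $A+z\subset(B+z)+rB_X$, so $d_H(A+z,B+z)=d_H(A,B)$. Finally, G\^ateaux smoothness is translation invariant: the supporting hyperplanes of $C+z$ at a boundary point $u+z$ are precisely the translates by $z$ of the supporting hyperplanes of $C$ at $u$, so $C\in\mathfrak C_{00,\rm gsm}(X)$ implies $C+z\in\mathfrak C_{\rm sm}(X)$. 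Equivalently, $\mathfrak C_{\rm sm}(X)=X+\mathfrak C_{00,\rm gsm}(X)$.

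Given $B\in\mathfrak C(X)$ and $\varepsilon>0$, the proof would now proceed as follows. Choose $z\in\mathrm{int}(B)$ and set $B_0=B-z\in\mathfrak C_{00}(X)\subset\mathfrak C_0(X)$. By Lemma~\ref{6.15}, there exists $C_0\in\mathfrak C_{00,\rm gsm}(X)$ with $d_H(B_0,C_0)<\varepsilon$. Put $C=C_0+z$. Then $C\in\mathfrak C_{\rm sm}(X)$ by the translation invariance of G\^ateaux smoothness, and
\[
d_H(B,C)=d_H(B_0+z,C_0+z)=d_H(B_0,C_0)<\varepsilon.
\]
This shows $\mathfrak C_{\rm sm}(X)$ is dense in $\mathfrak C(X)$.

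There is no real obstacle at this stage; all the analytic content is packed into Lemma~\ref{6.15} (and ultimately into the behavior of Fenchel's transform on $\mathscr H^2_c(X)$). The only conceptual point worth being explicit about is why G\^ateaux smoothness is preserved by translation, which is immediate from the characterization via unique supporting hyperplanes at each boundary point, together with the fact that ${\rm bdr}(C+z)={\rm bdr}(C)+z$.
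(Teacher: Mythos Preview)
Your proposal is correct and follows essentially the same approach as the paper: invoke Lemma~\ref{6.15} for density in $\mathfrak C_0(X)$, then pass to all of $\mathfrak C(X)$ by translation, using that $\mathfrak C(X)=X+\mathfrak C_0(X)$ and that both the Hausdorff metric and G\^ateaux smoothness are translation invariant. The paper's proof is terser (one sentence), while you spell out the translation argument more carefully, but there is no substantive difference.
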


\begin{proof} By Lemma \ref{6.16}, the cone $\mathfrak C_{\rm 00,sm}(X)$ consisting of all G\^{a}teaux smooth convex bodies $B$ with $0\in{\rm int}(B)$ contains a dense  subset $\mathfrak S$ of
$\mathfrak C_{0}(X)$. This entails that $X+\mathfrak S$ is dense in $X+\mathfrak C_{0}(X)=\mathfrak C(X).$

\end{proof}
\section{Strictly convex and G\^{a}teaux smooth convex bodies }
In this section, we will show that if a Banach space $X$ admits an equivalent strictly convex norm so that its dual norm is also strictly convex on $X^*$, then every convex body of $X$ can be uniformly approximated by strictly convex and G\^{a}teaux smooth convex bodies, i.e. Theorem \ref{1.3} stated in Section 1. In particular, every convex body in a separable Banach space, or, a reflexive Banach space  is uniformly approximated by strictly convex and G\^{a}teaux smooth convex ones, i.e. Theorem \ref{1.4}.

 Again as before, we restate Theorem \ref{1.3} as follows.
\begin{theorem}\label{7.1}
Let $X$ be a Banach space admitting an equivalent strictly convex norm so that its dual is also strictly convex. Then every convex body of $X$ can be uniformly approximated by strictly convex and G\^{a}teaux smooth convex bodies.
\end{theorem}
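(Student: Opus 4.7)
The plan is to upgrade the single-regularization tricks used in the proofs of Theorems~\ref{5.10} and~\ref{6.17} to a combined two-move construction inside $\mathscr H_c^2(X)$. Fix the equivalent norm $\|\cdot\|$ on $X$ for which both $\|\cdot\|$ and its dual norm $\|\cdot\|^*$ are strictly convex. The standard Smulyan-type argument (uniqueness of the norming functional) shows that strict convexity of $\|\cdot\|^*$ alone already forces $\|\cdot\|$ to be G\^{a}teaux smooth on $X\setminus\{0\}$, so $\phi:=\tfrac12\|\cdot\|^2\in\mathscr H_c^2(X)$ is simultaneously strictly convex and G\^{a}teaux differentiable, while by Proposition~\ref{2.5.4} its Fenchel transform $\psi:=\mathscr F(\phi)=\tfrac12(\|\cdot\|^*)^2$ is a strictly convex element of $\mathscr H_c^{*2}(X^*)$.

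After translating, we may assume $0\in\mathrm{int}(B)$ and set $f:=\tfrac12 p_B^2\in\mathscr H_c^2(X)$ via Theorem~\ref{5.4}. The approximation I propose is, for each $\eta>0$,
\[
\tilde f_\eta \;:=\; \bigl(f\,\Box\,\tfrac1\eta\phi\bigr) \;+\; \eta\,\phi.
\]
The inf-convolution with a large multiple of $\phi$ is designed to provide G\^{a}teaux differentiability, while the additive small multiple of $\phi$ restores strict convexity. I will show that $\tilde f_\eta\in\mathscr H_c^2(X)$ is simultaneously strictly convex and G\^{a}teaux differentiable and that $d(\tilde f_\eta,f)\to 0$ as $\eta\to 0^+$; then pushing forward through the locally Lipschitz isomorphism $T^{-1}$ of Theorem~\ref{5.4} and applying Lemmas~\ref{3.8} and~\ref{4.1} will deliver strictly convex and G\^{a}teaux smooth convex bodies $\tilde B_\eta$ with $d_H(\tilde B_\eta,B)\to 0$.

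Strict convexity of $\tilde f_\eta$ will be immediate from Lemma~\ref{3.7}, since $\eta\phi$ is strictly convex (the square of a strictly convex norm is strictly convex). For G\^{a}teaux differentiability, Proposition~\ref{2.6.2} combined with the rescaled form $\mathscr F(\tfrac1\eta\phi)=\eta\psi$ of Proposition~\ref{2.5.4} yields
\[
\mathscr F\bigl(f\,\Box\,\tfrac1\eta\phi\bigr) \;=\; \mathscr F(f) \;+\; \eta\,\psi \;\in\; \mathscr H_c^{*2}(X^*),
\]
which is strictly convex as a sum of a convex and a strictly convex function. Lemma~\ref{6.14}(i) then forces $f\,\Box\,\tfrac1\eta\phi = \mathscr F^{-1}(\mathscr F(f)+\eta\psi)$ to be G\^{a}teaux differentiable on $X$, and adding the G\^{a}teaux differentiable $\eta\phi$ preserves this. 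For the convergence, $\eta\phi\to 0$ uniformly on $B_X$, and for the inf-convolution any near-minimizer $y_\eta(x)$ satisfies $\|y_\eta(x)\|^2\le 2\eta f(x)$, which combined with local Lipschitz continuity of $f$ near $B_X$ with some constant $L$ yields the standard Moreau--Yosida estimate $\sup_{x\in B_X}|f\,\Box\,\tfrac1\eta\phi(x)-f(x)|\le\tfrac{L^2\eta}{2}$.

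The delicate point is producing G\^{a}teaux differentiability while preserving strict convexity. Addition of $\eta\phi$ alone cannot smooth out a non-differentiable $f$, and inf-convolution with a strictly convex function need not preserve strict convexity (the Moreau envelope of $|\cdot|$ on $\mathbb R$ is affine outside $[-\eta,\eta]$). The two operations are genuinely complementary, and the bridge that makes the inf-convolution step actually deliver G\^{a}teaux differentiability is precisely Lemma~\ref{6.14}(i)---strict convexity on $X^*$ implies G\^{a}teaux differentiability on $X$---which is why both halves of the hypothesis of the theorem (strict convexity of the norm on $X$ and on $X^*$) are needed simultaneously.
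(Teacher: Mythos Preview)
Your argument is correct and genuinely different from the paper's. The paper follows Asplund's averaging technique: it first sandwiches $B$ between a strictly convex body $A\subset B$ and a body $C\supset B$ whose polar is strictly convex, then runs the iteration $f_n=\tfrac12(f_{n-1}+g_{n-1})$, $g_n=f_{n-1}\Box g_{n-1}$ starting from $f_0=\tfrac12 p_A^2$, $g_0=\tfrac12 p_C^2$, and shows the common limit $\tfrac12 p_D^2$ is strictly convex with strictly convex Fenchel transform. Your route is a single-shot two-move regularization $(f\Box\tfrac1\eta\phi)+\eta\phi$: the inf-convolution step makes the Fenchel transform strictly convex (hence the function G\^ateaux differentiable via Lemma~\ref{6.14}(i)), and the additive step restores strict convexity without destroying differentiability because $\phi$ itself is G\^ateaux smooth. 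Your approach is more elementary---no iterative scheme, no sandwich construction, no appeal to the limiting argument in \cite{Asp}---and the Moreau--Yosida estimate gives the Hausdorff convergence directly. The paper's Asplund averaging, on the other hand, is a reusable machine that tends to propagate finer regularity (e.g.\ local uniform convexity on both sides) to the limit, which may matter for the stronger renorming-type statements the paper alludes to elsewhere; for the bare G\^ateaux/strict-convexity conclusion stated here, your direct construction is cleaner.
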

\begin{proof} Our proof was motivated by Asplund's averaging technique \cite{Asp}.
Clearly, it suffices to show that it is true for every $B\in\mathfrak C_{00}(X)$. We can assume that the original  norm $\|\cdot\|$ of $X$ itself is strictly convex and its dual  norm $\|\cdot\|^*$ on $X^*$ is also strictly convex.

Fix any  $B\in\mathfrak C_{00}(X)$. We can claim that for every $\eps>0$, there exist a strictly convex body $A\in\mathfrak C_{00}(X)$ contained in $B$ and a  convex body $C\in\mathfrak C_{00}(X)$ containing $B$ so that its polar $C^\circ=\{x^*\in X^*: \langle x^*,x\rangle\leq1;\;\forall x\in C\}$ is a $w^*$-closed strictly convex body of $X^*$ such that  $d_{H}(A,C)<\eps.$ Indeed, let $p_A, p_B$ and $p_C$ be successively, the Minkowski functionals generated by $A, B$ and $C$. Then for any $\delta>0$, both $f_\delta\equiv\frac{1}2(\delta\|\cdot\|^2+p_A^2)$ and $g_\delta^*\equiv\frac{1}2(\delta\|\cdot\|{^*}^2+p_{C^*}^2)$ are strictly convex, and $g_\delta^*$ is $w^*$-lower semicontinuous on $X^*$.
Put $A_\delta=\{x\in X: f_\delta(x)\leq1\}$ and $C_\delta=(C_\delta^*)^\circ$. Clearly, $A_\delta\subset B\subset C_\delta$ and
$d_{H}(A_\delta,C_\delta)<\eps$ for all sufficiently small $\delta>0$.

Starting with $f_0=\frac{1}2p_A^2, g_0=\frac{1}2p_C^2$,\;let \[f_n=\frac{1}2(f_{n-1}+g_{n-1}), \;g_n=f_{n-1}\Box g_{n-1},\; n=1,2,\cdots,\] where $f\Box g$ is the inf-convolution of $f$ and $g$ defined in Section 2.
Then for all $n\in\N$, \[f_0\geq f_1\geq\cdots\geq f_{n-1}\geq f_n\geq g_n\geq g_{n-1}\geq\cdots\geq g_0,\]
and
\[\mathscr F(f_0)\leq \mathscr F(f_1)\leq\cdots\leq \mathscr F(f_{n-1})\leq\mathscr F(f_n)\leq \mathscr F(g_n)\leq \mathscr F(g_{n-1})\leq\cdots\leq \mathscr F(g_0).\]
 Note that $f_0$ is strictly convex on $X$  and $\mathscr F(g_0)=\frac{1}2p_{C^*}^2$ is strictly convex on $X^*$, and that $\mathscr F(g_n)=\frac{1}2(\mathscr F(f_{n-1})+\mathscr F(g_{n-1}))$. Then for each $n\in\mathbb N$, $f_n$ is strictly convex on $X$ and $\mathscr F(g_n)$ is strictly convex on $X^*$. Monotonicity of both the sequences $\{f_n\}$ and $\{g_n\}$ and $\sup_{x\in B_X}|f_n(x)-g_n(x)|\rightarrow 0$ entail that that there is a convex body $D\in\mathfrak C_{00}$ such that
 \[\lim_nf_n=\frac{1}2p_D^2=\lim_ng_n,\;\;\text{uniformly on}\;B_X.\]
 Clearly, $A\subset D\subset C$. Since $A\subset B\subset C$ with $d_{H}(A,C)<\eps$, we get $d(B,D)<\eps$. By an argument as the same as the proof of Theorem 1 in \cite{Asp}, we see that $\frac{1}2p_D^2$ is strictly convex on $X$, and $\mathscr F(\frac{1}2p_D^2)$ is strictly convex on $X^*$. It follows that $D$ is both strictly convex and G\^{a}teaux smooth.
\end{proof}
The following consequence is just Theorem \ref{1.4}.
\begin{corollary}
Suppose that $X$ is either separable, or, reflexive. Then every convex body can be uniformly approximated by strictly convex and G\^ateaux smooth convex bodies.
\end{corollary}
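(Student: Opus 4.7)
The plan is to reduce the corollary directly to Theorem \ref{7.1}, which already supplies the uniform approximation by strictly convex and G\^{a}teaux smooth convex bodies once $X$ carries an equivalent norm that is itself strictly convex and whose dual norm on $X^*$ is also strictly convex. Thus my only task is to produce such a renorming in each of the two cases.

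For the reflexive case, I would first note that every reflexive Banach space is WCG (take $W = B_X$, which is weakly compact in a reflexive space), and the same holds for $X^*$. Consequently Corollary \ref{5.11} furnishes an equivalent strictly convex norm on $X$ and, separately, an equivalent strictly convex norm on $X^*$. Reflexivity of $X$ ensures that the weak and $w^*$-topologies on $X^*$ coincide, so any norm-closed convex body in $X^*$ is automatically $w^*$-closed; hence the strictly convex equivalent norm on $X^*$ is in fact a dual norm and corresponds to an equivalent norm on $X$ whose dual is strictly convex. Averaging this norm with the strictly convex norm on $X$ by the Asplund quadratic averaging technique used in the proof of Theorem \ref{7.1} yields a single equivalent norm on $X$ that is strictly convex with strictly convex dual.

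For the separable case, the plan is analogous. Corollary \ref{5.11} provides an equivalent strictly convex norm on $X$ directly. Separability of $X$ implies $w^*$-separability of $X^*$; a standard construction then yields an equivalent $w^*$-lower semicontinuous strictly convex dual norm on $X^*$. Namely, choose a countable total sequence $\{x_n\}\subset B_X$ and replace the squared original dual norm by its sum with $\sum_n 2^{-n}\langle\,\cdot\,,x_n\rangle^2$: $w^*$-lower semicontinuity follows because each $\langle\,\cdot\,,x_n\rangle^2$ is $w^*$-continuous while the original squared dual norm is $w^*$-lower semicontinuous, and strict convexity follows because $\{x_n\}$ is total in $X$. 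Averaging the two resulting equivalent norms on $X$ via Asplund's technique again produces the required renorming.

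With the renorming in hand, Theorem \ref{7.1} applies in both cases and delivers the conclusion. I anticipate no serious obstacle, since all the geometric depth has already been absorbed into Theorem \ref{7.1}; what remains is a routine combination of classical renorming constructions (Corollary \ref{5.11}, the $w^*$-separable dual construction, and Asplund's quadratic averaging), all of which have already appeared either explicitly in this paper or as standard facts.
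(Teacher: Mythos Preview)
Your proposal is correct and follows the same route as the paper: reduce to Theorem~\ref{7.1} by producing an equivalent strictly convex norm on $X$ with strictly convex dual. The paper's own proof simply asserts this renorming as a known fact for separable and reflexive spaces, whereas you spell out the classical constructions (WCG renorming, the $w^*$-separable dual construction, and Asplund averaging); note only that what you really invoke is the statement preceding Corollary~\ref{5.11} (WCG spaces admit strictly convex renormings), not the corollary itself, which concerns density of strictly convex bodies.
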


\begin{proof}
By Theorem \ref{7.1}, it suffices to note that every separable (reflexive, respectively) Banach space admits an equivalent strictly convex norm so that it dual norm is also strictly convex on $X^*$.
\end{proof}

\begin{remark}
We do not know whether G\^ateaux smooth convex bodies are dense in $\mathfrak C(X)$ whenever $X$ is a  G\^ateaux smooth Banach space; and whether it can be deduced that strictly convex and G\^ateaux smooth convex bodies are dense in $\mathfrak C(X)$ if $X$ is both strictly convex and G\^ateaux smooth.
\end{remark}

\bibliographystyle{amsalpha}

\end{document}